\definecolor{Purple}{rgb}{.7,0.08,0.6} 
\newenvironment{enum}{  
\begin{enumerate}[\upshape(\arabic{section}.\arabic{equation}a)] }  
{  \end{enumerate}   }
\newcommand{\itemref}[2] {{\upshape(\ref{#1}\ref{#2})}}
\theoremstyle{plain}
\newtheorem{Thm}{Theorem}
\newtheorem{Cor}[Thm]{Corollary}
\newtheorem{Prop}[Thm]{Proposition}
\newtheorem{Lem}[Thm]{Lemma}
\newtheorem{Def}[Thm]{Definition}
\newtheorem{Remark}[Thm]{Remark}
\theoremstyle{definition}
\newtheorem{Ex}{Example}
\newcommand{\PP}{\mathscr P}%space of L^p balls
\newcommand{\RR}{\mathscr R}%space of domains modeled on L^p balls
\newcommand{\Lop}{\mathbb L}%Leray operator
\newcommand{\Hop}{\mathbb H}%Henkin-Ramirez operator
\newcommand{\Aop}{\mathbb A}%K-S operator
\newcommand{\Aopnm}{\mathbb A_{n,m}}%restr-K-S operator
\newcommand{\levi}{\mathscr L}%Levi form
\newcommand{\rr}{R}%(z_1,z_2)\mapsto(|z_1|,|z_2|)
\renewcommand{\Re}{\operatorname{Re}}%real part
\renewcommand{\bar}{\overline}%bar (wide version often looks better)
\renewcommand{\tilde}{\widetilde}%tilde (wide version often looks better)
\newcommand{\C}{\mathbb C}%complex #s
\newcommand{\R}{\mathbb R}%real #s
\newcommand{\Z}{\mathbb Z}%integers
\newcommand{\N}{\mathbb N}%natural #s
\newcommand{\dee}{\partial}%dee
\newcommand{\deebar}{\overline\partial}%deebar
\newcommand{\st}{\,:\,}%such that (can switch in vertical bar instead)
\DeclareMathOperator*{\Span}{Span}%span
\newcommand{\lam}{\lambda}%lambda
\newcommand{\eps}{\epsilon}%epsilon
\newcommand{\eqdef}{\overset{ \text{def} }{=}}%equals by definition
\newcommand{\bndry}{b}%boundary
\newcommand{\w}{\wedge}%wedge
\newcommand{\inv}{^{-1}}%inverse
\numberwithin{equation}{section}
\begin{document}
\title[The spectrum of the Leray transform]
{The spectrum of the Leray transform for convex Reinhardt domains in $\C^2$}
\author[David E. Barrett and Loredana Lanzani]{David E. Barrett and Loredana Lanzani$^*$}
\thanks{2000 \em{Mathematics Subject Classification.} 32A26}
\thanks{$^*$
Supported in part by the AWM and the NSF (grant No.
DMS-0700815.)}
\address{Dept. of Mathematics\\University of Michigan
\\Ann Arbor, MI  48109-1043  USA }
\address{Dept. of Mathematics \\        
University of Arkansas 
Fayetteville, AR 72701
  USA }
\date{\today}
\email{barrett@umich.edu,\; lanzani@uark.edu}
\begin{abstract}  The Leray transform and related boundary operators are studied for a class of convex Reinhardt domains in $\mathbb C^2$. Our class is self-dual; it contains some domains with less than $C^2$-smooth boundary and also some domains with smooth boundary and 
degenerate Levi form.
 $L^2$-regularity is proved, and essential spectra are computed with respect to a family of boundary measures which includes surface measure. 
  A duality principle is established providing explicit unitary
 equivalence between operators on domains in our class and operators on the corresponding polar domains.
 Many of these results are new even for the classical case of smoothly bounded strongly convex Reinhardt domains.
  
\end{abstract}

%\keywords{}

\maketitle

\section{Introduction}\label{S:int}

The Leray transform $\Lop$ is a higher-dimensional analog of the classical Cauchy transform for planar domains.  It belongs to a family of operators, the Cauchy-Fantappi\'e transforms, projecting functions on the boundary onto the space of holomorphic boundary values.  These operators play an essential role in higher-dimensional function theory, just as the original Cauchy transform does in the one-dimensional setting.  (See for instance Kerzman and Stein \cite{KS1} and the monographs \cite{HeLe}, \cite{Kra} and\cite{Ran}.)  

Though the Cauchy-Fantappi\'e construction is not canonical in general, the Leray transform is distinguished by the simple explicit construction of the corresponding kernel function and by the presence of a good transformation law under linear fractional transformations (\cite{Bol2}, Thm. 3).  The construction of the Leray transform requires that the domain under study satisfy the geometric condition of ``$\C$-linear convexity." 

In this paper we provide rather detailed information about the Leray transform on certain convex Reinhardt domains in
 $\mathbb C^2$.
In particular, we learn that
\begin{itemize}
\item[(A)] $\Lop$ is $L^2$-bounded on some, but not all, smoothly bounded weakly convex domains;
\item[(B)] $\Lop$ is $L^2$-bounded on some, but not all, strongly convex domains whose boundaries are less than $C^2$-smooth;
\item[(C)] it is important to give thought to the choice of boundary measure -- in particular,  measures involving (suitably-chosen) powers of the Levi form work as well as (or better than)
surface measure;
\item[(D)]  there is a duality rule relating the qualitative and quantitative behavior of $\Lop$ on a domain $D$ to the corresponding behavior on the polar domain $D^*$ (defined in \eqref{E:polar-def}). This provides a surprising linkage between the previous topics (A) and (B).
\end{itemize}

The  {\em Reinhardt} designation means that $D$ is invariant under all rotations of the form 
\begin{equation}\label{E:rot}
(z_{1},z_{2})\mapsto(e^{i\theta_{1}}z_{1},e^{i\theta_{2}}z_{2}).
\end{equation}
Reinhardt domains occur naturally in  various contexts in several complex variables (for instance, the domains of convergence of power series of holomorphic functions are Reinhardt domains) and are often a source of meaningful examples which serve as models for more general theories.
One class of domains singled out in our work is the class $\tilde\RR$ consisting of bounded convex complete $C^1$-smooth Reinhardt domains in $\C^{2}$  that are $C^2$-smooth and strongly convex away from the axes $\{\zeta_{1}\zeta_{2}=0\}$.
  (See Proposition \ref{P:D-phi} for an alternate description of $\tilde\RR$.)

The class $\tilde\RR$ contains the subclass $\PP$ consisting of weighted
$L^p$-balls; 
that is,
\begin{equation}\label{E:PPdom}
\PP=\{D_{p,a_{1},a_{2}}\st a_1>0, a_2>0, 1<p<\infty\},
\end{equation}
where we have let
\begin{equation}\label{E:Pball}
D_{p,a_{1},a_{2}}\,=\,\{(z_1,z_2)\in \C^2\st a_1|z_1|^p + a_2|z_2|^p<1\}.\end{equation}

Finally, we let 
$\RR$ denote the class of domains in $\tilde\RR$ that are well-modeled by a domain $D_{p_j,a_{1,j},a_{2,j}}\in \PP$ near boundary points on each of the axes $\zeta_j=0$, $j=1, 2$.  (See Definition \ref{D:rr} for the formal description.)

We have $\PP \subsetneq \RR \subsetneq \tilde\RR.$

The smoothness of a domain in $\RR$ is determined by the size of the exponents $p_1,p_2$. On the one hand, if $1<p_{1}<2$,  an $\RR$-domain will be strongly geometrically convex (in the sense of \cite{Pol}) and  $C^{1, p_1-1}$-smooth near  $\{\zeta_{1}=0\}$; on the other hand,  for
$p_{1}\ge2$ the domain will be at
least $C^{2}$-smooth near $\{\zeta_{1}=0\}$, but strong geometric convexity and strong Levi pseudoconvexity will fail   if $p_{1}>2$.
The size of $p_{2}$ similarly determines the qualitative behavior of the domain near $\{\zeta_{2}=0\}$.

We will show in Proposition \ref{P:osc} below that for $D\in\tilde\RR$ and $\zeta\in\bndry D\setminus\{\zeta_1\zeta_2=0\}$ there is a unique $D_{p(\zeta),a_1(\zeta),a_2(\zeta)}\in\PP$ osculating $D$  at $\zeta$ in the sense that all data up through second order will match there.
 If $D\in \RR$, then setting $p(\zeta)=p_{1}$ when $\zeta_{1}=0$ and $p(\zeta)=p_{2}$ when $\zeta_{2}=0$ we get a continuous function $p(\zeta)$ defined on all of $\bndry D$ (see Proposition \ref{P:RRdom}).

For a $C^2$-smooth convex domain $D$ in $\C^2$ the 
{\em Leray integral} $\Lop=\Lop_{D}$
  is defined  by letting
\begin{equation}\label{E:Lop}
\Lop f(w) = \int\limits_{\zeta\in\bndry D}
\!\!\!\!f(\zeta) \,L(\zeta, w)
\end{equation}
for $w\in D$, where  
 \begin{equation}\label{E:Lker}
L(\zeta, w)=\frac{1}{(2\pi i)^2}\,\frac{j^*(\dee\rho\w\deebar\dee\rho)(\zeta)}{[\dee\rho(\zeta)\bullet(\zeta-w)]^{2}}
\end{equation}
is the {\em Leray kernel}  defined for $\zeta \in \bndry D, 
w \in D$;
here $\rho$ is a defining function for $\bndry D$, 
$j^*$ denotes the pullback of the inclusion $j: \bndry D \to \C^2$ acting on three-forms, and $\dee\rho(\zeta)\bullet(\zeta-w)$ denotes the action of the linear functional $\dee\rho(\zeta)$ on the vector $\zeta-w$, namely
\begin{equation}\label{E:gen}
\dee\rho(\zeta)\bullet(\zeta-w) =
\frac{\dee \rho}{\dee \zeta_{1}}(\zeta) (\zeta_{1}-w_{1})+\frac{\dee \rho}{\dee \zeta_{2}}(\zeta) (\zeta_{2}-w_{2}).
\end{equation} 
It follows from the convexity of $D$ that $\partial\rho (\zeta)$ is a so-called ``generating form'' for $D$; if $\bndry D$ contains no line segments we have in particular that the expression in \eqref{E:gen} is non-zero for each $\zeta\in bD$ and for each $w\in\overline D\setminus \{\zeta\}$  (see \cite{Ran}, \S IV.3.1 and \S IV.3.2).

The kernel $L(\zeta, w)$ is independent of the choice of defining function $\rho$ (see \cite{Ran},  \S IV.3.2, also \cite{Ler}, \cite{Nor},\cite{Aiz}).  

The function $\Lop f$ will be holomorphic in $D$ when the integral \eqref{E:Lop} converges, and $\Lop$ reproduces a holomorphic function from its boundary values.  

\medskip

We should mention that the Leray integral is defined more generally for $\C$-linearly convex domains, that is, for domains whose complement is a union of complex hyperplanes.  (These are also known as 
``lineally convex" domains.) But  $\C$-linearly convex complete Reinhardt domains are automatically convex (see Example 2.2.4 in \cite{APS}) so in the current work we focus only on convex Reinhardt domains.

\medskip

When $D$ satisfies additional hypotheses (e.g. strong convexity) then $\Lop$ extends to a singular integral operator on the boundary, also denoted by $\Lop$ (see \cite{KS1}, page 207, and \cite{LS}).

\medskip

For domains $D\in\tilde\RR$ the theory outlined above does not apply directly, but we will show in particular that the reproducing property for holomorphic functions is still valid (see Corollary \ref{C:poly-rep} and Proposition \ref{P:Hardy}). 

In order to consider bounds and adjoints for $\Lop$ we will need to introduce measures on $\bndry D$; specifically, we will consider measures $\mu$ that are invariant under the rotations \eqref{E:rot} and are absolutely continuous with respect to surface measure.
We will take particular interest in boundary measures
that are continuous positive multiples
of 
$|\levi(\zeta)|^{1-q}\,d\sigma(\zeta)$, where $q$ is a fixed  real exponent,  $d\sigma$ is surface measure and $|\levi|$ is the Euclidean norm
\begin{equation}\label{E:levinorm}
|\levi|=
-\dfrac{j^*(\dee\rho\w\deebar\dee\rho)}{|\nabla\rho|^2\,d\sigma}
\end{equation}
of the Levi-form.
(Here we interpret the three-form $j^*(\dee\rho\w\deebar\dee\rho)$ as a measure on $\bndry D$.)  We will say that such a measure has {\em order $q$} (see Definition \ref{D:ord-q} below).

We are ready now to state our main results.

\begin{Thm}\label{T:rress}
Suppose $D\in\RR$ 
and $\mu$ is a rotation-invariant boundary measure  of order $q$ with $q$ satisfying the condition 
\begin{equation}\label{E:rc0}
   |q|<
    \min\limits_{j=1, 2}\left|\frac{p_j}{p_j-2}\right|=
   \min\limits_{j=1, 2}{\left|\frac{1}{p_j} - \frac{1}{p^*_j}\right|}^{-1}.
   \end{equation}
(Here  $p_1$ and $p_2$ are as in the description of $\RR$ above,  and $p_j^*$ denotes the conjugate exponent to $p_j$ -- thus $1/p_j + 1/p_j^* =1$.)

Then  the Leray transform $\Lop$
is bounded on $L^{2}(\bndry D,\mu)$.

Moreover, the operator $\Lop_\mu^*\Lop$ admits an orthogonal basis of eigenfunctions, and  the essential spectrum of $\Lop_\mu^*\Lop$ is equal to
\begin{equation}\label{E:spectrum}
\{0\}\cup\left\{\frac{\sqrt{p(\zeta)\,p^{*}(\zeta)}}{2}\st \zeta\in\bndry D\right\}
\cup \left\{\lambda_{p_{j},q,n}\st j=1,2, \, n\ge0 \right\};
\end{equation}
here,  $\Lop_\mu^*$ is the adjoint of $ \Lop$ in $L^{2}(\bndry D,\mu)$, $p(\zeta)$
 is the function discussed above (see Proposition \ref{P:osc} and Proposition \ref{P:RRdom}),
 $p^*(\zeta)$ denotes the conjugate exponent to $p(\zeta)$ (thus 
$1/p(\zeta) + 1/p^*(\zeta) =1$), and 
\begin{equation}\label{E:gamma}
\lambda_{p,q,n}
=
\frac{\Gamma\left(\frac{2n}{p}+1+q\left(\frac{1}{p}-\frac{1}{p^{*}}\right)\right)
\Gamma\left(\frac{2n}{p^{*} }+1+q\left(\frac{1}{p^{*}}-\frac{1}{p}\right)\right)
}
{\Gamma^{2}(n+1)
\left(\frac{2}{p}\right)^{\frac{2n}{p }+1+q\left(\frac{1}{p}-\frac{1}{p^{*}}\right)}
\left(\frac{2}{p^{*}}\right)^{\frac{2n}{p^{*}}+1+q\left(\frac{1}{p^{*}}-\frac{1}{p}\right)}
}.
\end{equation}
\end{Thm}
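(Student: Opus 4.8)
The plan is to use the Reinhardt symmetry to diagonalize $\Lop_\mu^*\Lop$ into rank-one pieces, and then to read off the spectrum from explicit Gamma-function eigenvalues obtained in a model computation. \emph{Symmetry reduction:} since both $D$ and $\mu$ are invariant under the torus action \eqref{E:rot}, the operators $\Lop$ and $\Lop_\mu^*$ commute with that action and hence preserve each angular eigenspace $V_{m,n}\subset L^2(\bndry D,\mu)$ of functions transforming by the character $e^{i(m\theta_1+n\theta_2)}$, $(m,n)\in\Z^2$. Because $\Lop f$ lies in the same mode $(m,n)$ as $f$ and is a boundary value of a function holomorphic on the complete Reinhardt domain $D$ (Corollary \ref{C:poly-rep}, Proposition \ref{P:Hardy}), it must be a scalar multiple of $h_{m,n}:=\zeta_1^m\zeta_2^n\rest_{\bndry D}$ when $m,n\ge0$ and must vanish when $\min(m,n)<0$. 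Thus $\Lop$ acts on each $V_{m,n}$ as an operator of rank at most one, which I write as $\Lop f=\langle f,k_{m,n}\rangle_\mu\,h_{m,n}$ with $k_{m,n}=\Lop_\mu^*h_{m,n}/\|h_{m,n}\|_\mu^2\in V_{m,n}$, the reproducing identity $\Lop h_{m,n}=h_{m,n}$ forcing $\langle h_{m,n},k_{m,n}\rangle_\mu=1$. Consequently $\Lop_\mu^*\Lop$ is block diagonal, acting on $V_{m,n}$ (for $m,n\ge0$) as the positive rank-one operator $f\mapsto\|h_{m,n}\|_\mu^2\,\langle f,k_{m,n}\rangle_\mu\,k_{m,n}$ with single nonzero eigenvalue $\Lambda_{m,n}:=\|h_{m,n}\|_\mu^2\,\|k_{m,n}\|_\mu^2$ and as $0$ on everything else. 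This already furnishes the orthogonal eigenbasis and shows that the spectrum of $\Lop_\mu^*\Lop$ is $\{0\}\cup\{\Lambda_{m,n}\st m,n\ge0\}$, so its essential spectrum is $\{0\}$ together with the accumulation points of $\{\Lambda_{m,n}\}$.

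\emph{The model case $\PP$:} for $D=D_{p,a_1,a_2}\in\PP$ I compute $\Lambda_{m,n}$ in closed form. After normalizing $a_1=a_2=1$ by a diagonal linear change of variables---under which $\Lop$ is unitarily equivariant by the transformation law of \cite{Bol2}---the $|z|^p$ structure reduces $\|h_{m,n}\|_\mu^2$ and $\|k_{m,n}\|_\mu^2$ to one-dimensional Beta/Gamma integrals, yielding $\Lambda_{m,n}$ as an explicit ratio of Gamma functions. Convergence of these integrals at the lowest weights amounts exactly to positivity of the relevant Gamma arguments, which for $n=0$ is precisely the hypothesis \eqref{E:rc0}; this is where the constraint on $q$ enters, both for the very definition of the norms and for the uniform bound $\sup_{m,n}\Lambda_{m,n}<\infty$ that yields $L^2(\mu)$-boundedness of $\Lop$. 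Stirling's formula then produces two kinds of limits of $\Lambda_{m,n}$: letting one index tend to $\infty$ with the other fixed equal to $n$ gives $\lambda_{p,q,n}$ of \eqref{E:gamma}, while letting both indices tend to $\infty$ gives $\sqrt{pp^*}/2$.

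\emph{General $D\in\RR$ by osculation:} for a general $\RR$-domain the $\Lambda_{m,n}$ no longer have a single closed form, but the essential spectrum sees only their behavior as $(m,n)\to\infty$. The weight $|\zeta_1|^m|\zeta_2|^n$ concentrates, by Laplace's method, at the unique point of the profile curve $\{(|\zeta_1|,|\zeta_2|)\st\zeta\in\bndry D\}$ maximizing it, and there the integrands defining $h_{m,n},k_{m,n}$ are governed, to the order that survives in the limit, by the second-order data of $D$ at the corresponding $\zeta$, i.e.\ by the osculating $L^{p(\zeta)}$-ball of Proposition \ref{P:osc}. Hence along rays into the interior $\Lambda_{m,n}\to\sqrt{p(\zeta)p^*(\zeta)}/2$, tracing out the continuous band as $\zeta$ ranges over $\bndry D$; with one index fixed equal to $n$ the concentration is forced onto an axis, where $D$ is modeled by $D_{p_j,\cdot,\cdot}$ (Definition \ref{D:rr}, Proposition \ref{P:RRdom}), producing the discrete limits $\lambda_{p_2,q,n}$ and $\lambda_{p_1,q,n}$. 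Assembling the accumulation points of $\{\Lambda_{m,n}\}$ with $\{0\}$ gives \eqref{E:spectrum}.

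I expect the main obstacle to be the osculation step: one must show that replacing $D$ by its osculating $L^p$-ball perturbs each high-weight eigenvalue by an amount vanishing in the limit---a compact-perturbation statement for the essential spectrum---uniformly enough to keep the interior band and the two axial families from contaminating one another and to exclude spurious accumulation points. A secondary difficulty is that for the less-than-$C^2$ members of $\RR$ the classical singular-integral theory does not apply, so the rank-one reduction and the convergence of the integrals defining $h_{m,n}$ and $k_{m,n}$ must be justified directly from the explicit kernel \eqref{E:Lker} and the reproducing property rather than from boundedness of a Calder\'on--Zygmund operator.
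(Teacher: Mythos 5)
Your first step coincides with the paper's: the torus action block-diagonalizes $\Lop$ into rank-one pieces $\Lop_{n,m}f=\langle f,\kappa_{n,m}\rangle\,\tau_{n,m}$, the only nonzero eigenvalue of $\Lop_\mu^*\Lop$ on the $(n,m)$-mode is $\|\kappa_{n,m}\|_\mu^2\|\tau_{n,m}\|_\mu^2=\|\Lop_{n,m}\|_\mu^2$, and the essential spectrum is $\{0\}$ together with the accumulation points of these numbers (Propositions \ref{P:piece-norm} and \ref{P:l*l}, Theorems \ref{T:Lnrm} and \ref{T:Less1}). Your closed-form Gamma computation for $\PP$-domains is also correct and recovers \eqref{E:gamma}. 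The genuine gap is the step you yourself flag as the main obstacle: passing from the model to a general $D\in\RR$ by osculation. The paper never argues by comparison with the osculating ball. It proves the limits directly on the actual domain (Theorem \ref{T:asymp}): a uniform Laplace-method analysis of the three integrals in \eqref{E:piece-norm}, in which the identity \eqref{E:nm-diff} makes the local value $\breve p(s_{n,m})$ at the concentration point $s_{n,m}=n/(n+m)$ enter the quadratic decay rate, global exponential bounds such as \eqref{E:gnmk-est2} (valid because $\inf(2k/\breve p+1-k)>0$, see \eqref{E:lqbd}) justify dominated convergence uniformly as the peak moves, and the weight of order $q$ is controlled by \eqref{E:hnmk-move}; the axial cases use a Watson-lemma variant (Lemma \ref{L:wl}) matched to the $\RR$-asymptotics \eqref{E:r1-asympt}.

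As framed, your transfer step would not go through. ``Compact perturbation'' has no purchase here: the Leray transforms of $D$ and of an osculating ball act on different Hilbert spaces, and even after choosing an identification, what you need is convergence of the individual numbers $\Lambda_{n,m}$ along prescribed sequences -- but extracting those limits \emph{is} computing the essential spectrum, so invoking stability of the essential spectrum to get them is circular. Moreover the integrals defining $\Lambda_{n,m}$ run over all of $\gamma$, so before any local comparison you need tail estimates away from the peak, uniform in $(n,m)$, and since $\breve p$ is merely continuous the peak $s_{n,m}$ moves with $(n,m)$, so the second-order agreement with the osculating ball must also be controlled uniformly in the peak location. Writing down those estimates amounts to reproving Theorem \ref{T:asymp}, at which point the model computation is no longer needed. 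Two smaller points: your $k_{m,n}=\Lop_\mu^*h_{m,n}/\|h_{m,n}\|_\mu^2$ presupposes the adjoint, i.e.\ boundedness, which is part of what is being proved -- the paper instead defines $\kappa_{n,m}$ explicitly from the kernel, see \eqref{E:kappa-tau}; and for a general $\RR$-domain the role of hypothesis \eqref{E:rc0} (admissibility of order-$q$ measures, Proposition \ref{P:padm} via \eqref{E:ord-q-rr}) must be established on the actual domain, not inferred from the convergence of the model's Gamma integrals.
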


(For the definition and basic properties of the essential norm and the essential spectrum, see Propositions \ref{R:ess-spec} and
 \ref{R:ess-norm} and adjacent material). 
Note that the interval $|q|\leq 1$ is always included in \eqref{E:rc0}.  In Corollary \ref{P:smdom} below we will show that if $D \in \RR$ is a smooth domain then  $p_1=p_2=2$ so that \eqref{E:rc0} holds for \emph{all} values $q\in \mathbb R$.  On the other hand,  if at least one of the $p_j$ is different from 2 then \eqref{E:rc0} defines a proper subinterval of the real line. 

 Theorem \ref{T:rress} may be compared with previous work by 
Bonami and Lohou\'e \cite{BL} and Hansson \cite{Han} (which we specialize here to complex dimension $n=2$), as follows.  Given $1<p_j<+\infty$ set
$$
 D=\{(\zeta_1,\zeta_2)\in\mathbb C^2 \st |\zeta_1|^{p_{_1}}+|\zeta_2|^{p_{_2}}<1\}.
 $$
Note that $D$ belongs to the class  $\RR$. 
Bonami and Lohou\'e study Cauchy-Fantappi\'e
transforms and related operators for $D$ as above 
 when $p_j>2$, $j=1,2$ and $\mu$ is a  measure of order $q=1$.
Hansson proves that for $D$ as above the operator
 $\Lop$ is bounded on $L^2(\bndry D,\mu)$
 when $p_j>2$ are positive integers and $\mu$ is a measure of order $q=0$. In either case $\bndry D$ is $C^{k}$-smooth ($k\ge 2$) and weakly pseudoconvex (its Levi form is singular at boundary points that lie along the axes $\{\zeta_1\zeta_2=0\}$). When $D\in\RR$ is as above but $p_j<2$ it follows that $D$ is strongly convex but non-smooth and  the construction of the Cauchy-Fantappi\'e kernels investigated by Bonami and Lohou\' e becomes problematic (see comments below after Corollary \ref {C:smksess}), whereas the Leray transform $\Lop$ is still well defined and by Theorem \ref{T:rress} it is bounded in $L^2(\bndry D, \mu)$ for all measures $\mu$ of order $q$ with $q$ ranging in the interval \eqref{E:rc0}.
 In fact, more is true:
in \S \ref{S:rr} we present a duality result providing an explicit unitary equivalence of the Leray transform for a domain $D\in\tilde\RR$ (resp. $D\in \RR$) and the Leray transform for its polar domain $D^*\in\tilde\RR$ (resp. $D^*\in\RR$). On the one hand, we see that
the polar of a smooth, weakly pseudoconvex domain may be non-smooth and strongly
convex;
for example, the polar of the domain $D\in\RR$ given above with $p_j>2$ is
$$
 D^*=\{(\zeta_1,\zeta_2)\st |\zeta_1|^{p^*_1}+|\zeta_2|^{p^*_2}<1\}\in\RR
$$
where $p^*_j <2$ is the conjugate exponent of $p_j$;
see Theorem \ref{T:duality} for the precise statement in the general case. On the other hand,
combining this duality with \eqref{E:spectrum} and \eqref{E:gamma} in Theorem \ref{T:rress} 
we see that, modulo a 
switch of measure (from $\mu$ of order $q$ to $\tilde\mu$ of order $-q$),
from the point of view of the spectral theory of the Leray transform any domain $D$ in the class $\RR$  is 
 qualitatively and quantitatively indistinguishable from its polar domain $D^*$.

 Lanzani and Stein show in \cite{LS} that $\Lop$ is $L^2$-bounded with respect to surface measure when $D$ is a bounded strongly ($\C$-linearly) convex domain in $\C^n$ with $C^{1,1}$-smooth boundary.  The examples discussed above show that neither strong convexity nor  $C^{1,1}$-smoothness of the boundary is a necessary condition for $L^2$-boundedness of $\Lop$.  On the other hand, in \S \ref{S:ce} we present examples showing that if we  
try to settle for weak convexity or  $C^{1,\alpha}$-smoothness of the boundary with no further conditions then $\Lop$ may fail to be $L^2$-bounded with respect to any reasonable boundary measure.

\medskip

Following Kerzman and Stein (\cite{KS1}, \cite{KS2}) we will use the notation $\Aop_\mu$
 for the anti-self-adjoint operator $\Lop_\mu^*-\Lop$.

\begin{Thm}\label{T:rress-ks} In the setting of Theorem \ref{T:rress},  the operator $\Aop_\mu$ admits an orthogonal basis of eigenfunctions, and  the essential spectrum  of $\Aop_\mu$  is equal to
 \begin{multline*}
\{0\}\cup\left\{\pm i 
\sqrt{\frac{\sqrt{p(\zeta)p^{*}(\zeta)}}{2}-1}\st \zeta\in\bndry D\right\}\\
\cup \left\{\pm i\sqrt{\lambda_{p_{j},q,n}-1}\st j=1,2, \, n\ge0 \right\}.
\end{multline*}
\end{Thm}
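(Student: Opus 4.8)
The plan is to derive Theorem \ref{T:rress-ks} from Theorem \ref{T:rress} through a purely algebraic relation between $\Aop_\mu=\Lop_\mu^*-\Lop$ and the operator $\Lop_\mu^*\Lop$ whose spectral data are already in hand (and whose boundedness already gives boundedness of $\Aop_\mu$). The essential point is that $\Lop$ is an idempotent on $L^2(\bndry D,\mu)$: by the reproducing property (Corollary \ref{C:poly-rep} and Proposition \ref{P:Hardy}) the output $\Lop f$ is always a holomorphic boundary value and $\Lop$ fixes such boundary values, so $\Lop^2=\Lop$. Abbreviating $P=\Lop$ and $P^*=\Lop_\mu^*$, I will show that the spectra of the anti-self-adjoint operator $P^*-P$ and of the positive operator $P^*P$ are tied together by the correspondence $\lambda\leftrightarrow\pm i\sqrt{\lambda-1}$.

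First I would exploit the rotation symmetry exactly as in the proof of Theorem \ref{T:rress}. Since $D$ is Reinhardt, the kernel \eqref{E:Lker} is equivariant under the rotations \eqref{E:rot}, so $\Lop$, $\Lop_\mu^*$ and $\Aop_\mu$ all commute with the torus action and preserve each character subspace $V_{n_1,n_2}$ of functions transforming like $e^{i(n_1\theta_1+n_2\theta_2)}$. On $V_{n_1,n_2}$ the holomorphic boundary values are precisely the multiples of the single monomial $\zeta_1^{n_1}\zeta_2^{n_2}$ when $n_1,n_2\ge0$, and there are none otherwise; hence $P$ restricts to a \emph{rank-one} idempotent on each $V_{n_1,n_2}$ with $n_1,n_2\ge0$ and to $0$ on the remaining characters.

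Next comes the local linear algebra. For a rank-one idempotent $P$, both $P^*P$ and $\Aop=P^*-P$ annihilate $\mathrm{ker}\,P\cap\mathrm{ker}\,P^*$ and act within the two-dimensional space $W=\mathrm{ran}\,P+\mathrm{ran}\,P^*$; as $\Aop$ is anti-self-adjoint it is reduced by $W$ and $W^\perp$. Choosing a $\mu$-orthonormal basis of $W$ with first vector along $\mathrm{ran}\,P$ puts $P$ in the form $\left(\begin{smallmatrix}1&t\\0&0\end{smallmatrix}\right)$, whence $P^*P$ has the single nonzero eigenvalue $\lambda=1+|t|^2\ge1$ while $\Aop$ has eigenvalues $\pm i|t|=\pm i\sqrt{\lambda-1}$ on $W$ and $0$ on $W^\perp$. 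Thus, block by block, each nonzero eigenvalue $\lambda$ of $\Lop_\mu^*\Lop$ is matched by the conjugate pair $\pm i\sqrt{\lambda-1}$ of eigenvalues of $\Aop_\mu$, together with a common infinite-dimensional kernel.

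Assembling these blocks, the union over characters of the two-dimensional eigenbases (adjoined to the common kernel) furnishes the asserted orthogonal basis of eigenfunctions for the normal operator $\Aop_\mu$. It remains to read off the essential spectrum: Theorem \ref{T:rress} gives $\sigma_{\mathrm{ess}}(\Lop_\mu^*\Lop)$ as the set \eqref{E:spectrum}, and since the map $\lambda\mapsto i\sqrt{\lambda-1}$ is a homeomorphism of $[1,\infty)$ onto $i\,[0,\infty)$ (note $\lambda\ge1$ throughout, and $\sqrt{p\,p^*}/2\ge1$ with equality only at the smooth value $p=2$), it and its conjugate branch carry accumulation points and infinite-multiplicity values of $\Lop_\mu^*\Lop$ to those of $\Aop_\mu$. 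Applying the correspondence termwise to \eqref{E:spectrum} yields exactly the set in the statement. I expect the main obstacle to be precisely this last bookkeeping — verifying that the blockwise eigenvalue matching respects the split between discrete and essential spectrum, and in particular that the continuous family $\{\sqrt{p(\zeta)p^*(\zeta)}/2\}$, which emerges as the limiting eigenvalue profile along sequences $(n_1,n_2)\to\infty$, transfers cleanly under the square-root map without generating spurious essential spectrum.
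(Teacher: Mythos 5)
Your proposal is correct and takes essentially the same route as the paper: your $2\times 2$ block computation is precisely the paper's Proposition \ref{P:norm-Anm} (each Fourier piece $\left(\Aop_{n,m}\right)_\mu$ has spectrum $\{0,\pm i\sqrt{\|\Lop_{n,m}\|_\mu^2-1}\}$), and your transfer of the essential spectrum of $\Lop_\mu^*\Lop$ through the homeomorphism $\lambda\mapsto\pm i\sqrt{\lambda-1}$ is exactly the content of Theorem \ref{T:Anrm} combined with the limiting block norms already established for Theorem \ref{T:rress}. The paper's own proof (``proceeds in similar fashion using Theorem \ref{T:Anrm}'') performs the same blockwise eigenvalue matching and limit bookkeeping that you carry out by hand.
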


As mentioned above,  Corollary \ref{P:smdom} below will show that if $D \in \RR$ is a smooth domain, then 
$p_1=p_2=2$ so that for all $q\in \R$ we have  $\lambda_{p_{j},q,n}  =1$, $j=1,2$; thus the choice of $q$ is no longer relevant in the description of our class of measures and we obtain the following results.

\begin{Thm}\label{T:smless}
Suppose  $D\subset \C^2$ is a $C^2$-smooth, strongly convex Reinhardt domain and let $\mu$ be any rotation-invariant continuous positive multiple of surface measure.

Then  $\Lop$ is bounded on $L^2(\bndry D, \mu)$.

Moreover, the operator $\Lop_\mu^*\Lop$ admits an orthogonal basis of eigenfunctions, and  the essential spectrum of $\Lop_\mu^*\Lop$ is equal to 
\begin{equation*}
\{0\}\cup\left\{\frac{\sqrt{p(\zeta)\,p^{*}(\zeta)}}{2}\st \zeta\in\bndry D\right\}\ 
\end{equation*}
or, equivalently, 
\begin{equation*}
\{0\}\cup\left\{\frac{p(\zeta)}{2\sqrt{p(\zeta)-1}}\st \zeta\in\bndry D\right\} .
\end{equation*}
The essential norm of $\Lop$ is 
\begin{equation*}
\max\left\{\sqrt[4]{\frac{p(\zeta)\,p^{*}(\zeta)}{4}}\st \zeta\in\bndry D\right\}.
\end{equation*}
\end{Thm}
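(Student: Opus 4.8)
The plan is to obtain Theorem \ref{T:smless} as a specialization of Theorem \ref{T:rress}, so that the only genuine work lies in identifying the relevant parameters and then simplifying \eqref{E:spectrum}. First I would verify that $D$ lies in $\RR$ with $p_1=p_2=2$. Since $D$ is bounded, convex and Reinhardt it is automatically complete Reinhardt (averaging a torus orbit over a single angular variable produces, by convexity, every point of smaller modulus in that coordinate), so $D\in\tilde\RR$. Being $C^2$-smooth and strongly convex \emph{up to and including} the axes then forces the osculating $\PP$-domain supplied by Proposition \ref{P:osc} to have exponent exactly $2$ at each axis point: an exponent $p>2$ would degenerate the Levi form, while $p<2$ would destroy $C^2$-smoothness. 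Hence $D\in\RR$ and $p_1=p_2=2$, which is precisely the content I would isolate as Corollary \ref{P:smdom}. Next I would note that a rotation-invariant continuous positive multiple of $d\sigma$ is a measure of order $q=1$ (take $|\levi|^{1-q}=1$ in \eqref{E:levinorm}), and that with $p_1=p_2=2$ the right-hand side of \eqref{E:rc0} is infinite, so the hypothesis of Theorem \ref{T:rress} is satisfied for this $q$.

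Theorem \ref{T:rress} then delivers at once the $L^2(\bndry D,\mu)$-boundedness of $\Lop$, the orthogonal eigenbasis for $\Lop_\mu^*\Lop$, and the essential-spectrum formula \eqref{E:spectrum}; it remains only to simplify the latter. Substituting $p=p^*=2$ into \eqref{E:gamma}, every factor $\frac{1}{p}-\frac{1}{p^*}$ vanishes, the bases $\frac{2}{p}$ and $\frac{2}{p^*}$ both equal $1$, and the expression collapses to $\lambda_{2,q,n}=\Gamma^2(n+1)/\Gamma^2(n+1)=1$ for all $n$ and all $q$. Thus the third set in \eqref{E:spectrum} is just $\{1\}$. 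Since $p(\zeta)=2$ at every axis point, the continuous function $\sqrt{p(\zeta)p^*(\zeta)}/2$ already takes the value $1$ there, so $\{1\}$ is absorbed and the essential spectrum reduces to exactly $\{0\}\cup\{\sqrt{p(\zeta)p^*(\zeta)}/2\st\zeta\in\bndry D\}$. The equivalent form is immediate from $p^*=p/(p-1)$, which gives $\sqrt{p\,p^*}/2=p/(2\sqrt{p-1})$.

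For the essential norm I would appeal to Propositions \ref{R:ess-spec} and \ref{R:ess-norm}: passing to the Calkin algebra, the $C^*$-identity gives $\|\Lop\|_{ess}^2=\|\Lop_\mu^*\Lop\|_{ess}$, and since $\Lop_\mu^*\Lop$ is positive and self-adjoint its essential norm equals the supremum of its essential spectrum. Because $\zeta\mapsto p(\zeta)$ is continuous (Proposition \ref{P:RRdom}) and $\bndry D$ is compact, that supremum is attained; taking square roots via $\sqrt{\sqrt{p\,p^*}/2}=\sqrt[4]{p\,p^*/4}$ yields the stated value $\max\{\sqrt[4]{p(\zeta)p^*(\zeta)/4}\st\zeta\in\bndry D\}$.

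The only step requiring genuine care is the first: confirming that a smooth strongly convex Reinhardt domain truly lands in $\RR$ with $p_1=p_2=2$, rather than merely in $\tilde\RR$. Once this is in hand the entire quantitative content is inherited from Theorem \ref{T:rress}, and the remainder is a direct computation.
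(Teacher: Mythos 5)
Your proposal is correct and takes essentially the same route as the paper: the paper obtains Theorem \ref{T:smless} by combining Corollary \ref{P:smdom} (a $C^2$-smooth strongly convex Reinhardt domain lies in $\RR$ with $p_1=p_2=2$, so \eqref{E:rc0} holds for every $q$, and in particular for surface-type measures with $q=1$) with Theorem \ref{T:rress} and the observation that $\lambda_{2,q,n}=1$ for all $n$ and $q$. Your collapse of the spectrum formula, the absorption of $\{1\}$ into the continuous part since $p(\zeta)=2$ on the axes, and the essential-norm computation via Proposition \ref{R:ess-norm} coincide with the paper's intended argument.
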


\begin{Thm}\label{T:smksess}
In the setting of Theorem \ref{T:smless}, the operator $\Aop_\mu$ admits an orthogonal basis of eigenfunctions, and  the essential spectrum  of $\Aop_\mu$  is equal to 
\begin{equation*}
\{0\}\cup\left\{\pm i \sqrt{\frac{\sqrt{p(\zeta)\,p^{*}(\zeta)}}{2}-1}\st \zeta\in\bndry D\right\}\  
\end{equation*}
or, equivalently,
\begin{equation*}
\{0\}\cup\left\{\pm i \sqrt{\frac{ p(\zeta)}{2\sqrt{ p(\zeta)-1}}-1}\st \zeta\in\bndry D\right\}.
\end{equation*}
The essential norm of $\Aop_\mu$ is
\begin{equation*}
\max\left\{\sqrt{\frac{\sqrt{p(\zeta)\,p^{*}(\zeta)}}{2}-1}\st \zeta\in\bndry D\right\}.
\end{equation*} 

\end{Thm}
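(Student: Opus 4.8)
The plan is to deduce Theorem~\ref{T:smksess} from Theorem~\ref{T:smless} by exploiting a purely algebraic relationship between $\Aop_\mu=\Lop_\mu^*-\Lop$ and $\Lop_\mu^*\Lop$. Two structural facts drive everything. First, $\Lop$ is idempotent, $\Lop^2=\Lop$ (it reproduces holomorphic boundary values, as recorded in Corollary~\ref{C:poly-rep} and Proposition~\ref{P:Hardy}), so its $\mu$-adjoint satisfies $(\Lop_\mu^*)^2=\Lop_\mu^*$ as well; second, $\Aop_\mu$ is anti-self-adjoint, $\Aop_\mu^*=-\Aop_\mu$, hence normal with purely imaginary spectrum. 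I would first record these facts together with the orthogonal decomposition of $L^2(\bndry D,\mu)$ underlying the proof of Theorem~\ref{T:smless}, with respect to which $\Lop$ and $\Lop_\mu^*$ block-diagonalize. Following the two-subspace (Halmos-type) picture for the pair $(\Lop,\Lop_\mu^*)$, the space splits orthogonally into a ``generic'' part consisting of two-dimensional blocks on which $\Lop$ restricts to a rank-one idempotent, and a complementary part on which $\Lop$ is already an orthogonal projection (so that $\Lop=\Lop_\mu^*$ and $\Aop_\mu=0$ there).

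The heart of the argument is a two-dimensional computation on each generic block. Writing the restricted idempotent as $\Lop=|a\rangle\langle b|$ with $\langle b| a\rangle=1$, so that $\Lop_\mu^*=|b\rangle\langle a|$, one finds $\Lop_\mu^*\Lop=\|a\|^2\,|b\rangle\langle b|$ with single nonzero eigenvalue $\lambda=\|a\|^2\|b\|^2$, while $\Aop_\mu=|b\rangle\langle a|-|a\rangle\langle b|$ is traceless and anti-self-adjoint with $\det\Aop_\mu=\|a\|^2\|b\|^2-|\langle a| b\rangle|^2=\lambda-1$ by the Gram (Lagrange) identity. Consequently the eigenvalues of $\Aop_\mu$ on this block are exactly $\pm i\sqrt{\lambda-1}$. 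This establishes, block by block, the eigenvalue correspondence $\lambda\mapsto\pm i\sqrt{\lambda-1}$ between $\Lop_\mu^*\Lop$ and $\Aop_\mu$, with the nongeneric part contributing the eigenvalue $0$. Assembling the mutually orthogonal block eigenbases, each orthonormal by normality, yields the asserted orthogonal eigenbasis for $\Aop_\mu$.

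It then remains to pass from eigenvalues to essential spectra and to the essential norm. Since $g(\lambda)=\pm i\sqrt{\lambda-1}$ is continuous on the nonzero part of the spectrum of $\Lop_\mu^*\Lop$, and since $\bndry D$ is compact with $p(\zeta)$ continuous (Propositions~\ref{P:osc}, \ref{P:RRdom}), I would invoke the essential-spectrum machinery (Propositions~\ref{R:ess-spec}, \ref{R:ess-norm}) together with Theorem~\ref{T:smless}'s computation that the essential spectrum of $\Lop_\mu^*\Lop$ is $\{0\}\cup\{\sqrt{p(\zeta)\,p^*(\zeta)}/2\st\zeta\in\bndry D\}$ to conclude that the essential spectrum of $\Aop_\mu$ is $\{0\}\cup\{\pm i\sqrt{\sqrt{p(\zeta)\,p^*(\zeta)}/2-1}\st\zeta\in\bndry D\}$. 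The equivalent form follows from $p^*=p/(p-1)$, whence $\sqrt{p\,p^*}=p/\sqrt{p-1}$. Finally, because $\Aop_\mu$ is normal, its essential norm equals the largest modulus attained in its essential spectrum; since $\lambda\mapsto\sqrt{\lambda-1}$ is increasing and the maximum is attained by compactness of $\bndry D$, this equals $\max\{\sqrt{\sqrt{p(\zeta)\,p^*(\zeta)}/2-1}\st\zeta\in\bndry D\}$.

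The main obstacle is the third step: upgrading the blockwise eigenvalue correspondence to a statement about essential spectra rather than individual eigenvalues. One must check that the two-dimensional reduction is uniform across the rotation-invariant decomposition, that the branch of the square root and the sign $\pm$ are handled consistently under $g$, and---most delicately---that $0$ genuinely persists in the essential spectrum of $\Aop_\mu$ and is neither spuriously created nor destroyed by $g$ (it may arise both from the nongeneric part and as the limit of $\pm i\sqrt{\lambda-1}$ at boundary points where $p(\zeta)=2$). This is precisely where the detailed structure produced in the proof of Theorem~\ref{T:smless}, rather than its bare statement, must be brought to bear.
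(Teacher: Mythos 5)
Your proposal is correct, and its mathematical core coincides with the paper's own argument: your two-dimensional block computation --- the restricted idempotent written as a rank-one operator $f\mapsto\langle f,b\rangle a$ with $\langle a,b\rangle=1$, so that $\Aop_\mu$ restricts to a traceless anti-self-adjoint rank-two operator with determinant $\|a\|^2\|b\|^2-1=\lambda-1$ and eigenvalues $\pm i\sqrt{\lambda-1}$ --- is exactly Proposition \ref{P:norm-Anm}, with your $a,b$ playing the roles of the paper's $\tau_{n,m},\kappa_{n,m}$; your ``generic blocks'' are the spans $\Span\{\kappa_{n,m},\tau_{n,m}\}$ inside the Fourier pieces $L^2_{n,m}(\bndry D,\mu)$, and your assembly step is Theorem \ref{T:Anrm}. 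The one genuine organizational difference is the input: you feed in the conclusion of Theorem \ref{T:smless}, whereas the paper derives Theorems \ref{T:smless} and \ref{T:smksess} in parallel as the $p_1=p_2=2$ case of Theorems \ref{T:rress} and \ref{T:rress-ks}, both obtained from the asymptotics of $\|\Lop_{n,m}\|_\mu$ (Theorem \ref{T:asymp}); your shortcut buys the convenience of not re-invoking the asymptotic analysis. The obstacle you flag at the end is real only if you insist on working from the bare statement of Theorem \ref{T:smless} plus abstract Calkin-algebra facts about idempotents; it dissolves once you use the paper's Section 3 machinery, which you are entitled to since it precedes both theorems. Indeed, by Theorem \ref{T:Less1} the nonzero part of the essential spectrum of $\Lop_\mu^*\Lop$ is exactly the set of limits of $\|\Lop_{n_j,m_j}\|_\mu^2$ along sequences with $\max\{n_j,m_j\}\to\infty$; every $\|\Lop_{n,m}\|_\mu\ge 1$ (each $\Lop_{n,m}$ is a nonzero idempotent, or use $1=\langle\tau_{n,m},\kappa_{n,m}\rangle\le\|\tau_{n,m}\|_\mu\|\kappa_{n,m}\|_\mu$), and $\lambda\mapsto\sqrt{\lambda-1}$ is a homeomorphism of $[1,\infty)$ onto $[0,\infty)$, so this limit set is carried bijectively onto the set of limits of $\sqrt{\|\Lop_{n_j,m_j}\|_\mu^2-1}$, which by Theorem \ref{T:Anrm} is the nonzero part of the essential spectrum of $\Aop_\mu$; finally, $0$ belongs to both essential spectra unconditionally (both Theorem \ref{T:Less1} and Theorem \ref{T:Anrm} include it, reflecting the infinite-dimensional common kernel on the pieces with $\min\{n,m\}<0$), so no spurious creation or destruction of $0$ under your map $g$ is possible. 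With that substitution your argument is complete and agrees with the paper's.
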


Combining Theorem \ref{T:smksess} with Proposition \ref{P:pcon} below we obtain the following.

\medskip
\begin{Cor}\label{C:smksess}
In the setting of Theorem \ref{T:smksess}, 
 the operator $\Aop_\mu$ will be compact in $L^2(\bndry D, \mu)$ if and only if $D$ is a domain of the form \[\{(z_1,z_2)\st a_1|z_1|^2 + a_2|z_2|^2<1\}.\]
\end{Cor}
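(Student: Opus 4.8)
The plan is to read the equivalence off directly from the computation of the essential spectrum in Theorem~\ref{T:smksess}, exploiting the fact that $\Aop_\mu=\Lop_\mu^*-\Lop$ is anti-self-adjoint (so that $(\Aop_\mu)^*=-\Aop_\mu$) and hence normal. First I would record the general spectral dictionary: a normal operator that admits an orthogonal basis of eigenfunctions is compact if and only if its essential spectrum reduces to $\{0\}$. Indeed, for such a pure-point operator the essential spectrum consists exactly of the accumulation points of the eigenvalues together with the eigenvalues of infinite multiplicity, and compactness is precisely the assertion that $0$ is the only possible accumulation point and that every nonzero eigenvalue has finite multiplicity. This is the step where Theorem~\ref{T:smksess} is used, as it supplies both the diagonalizability of $\Aop_\mu$ and the explicit form of its essential spectrum.

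Next I would determine when that essential spectrum equals $\{0\}$. According to Theorem~\ref{T:smksess} the nonzero candidate points are $\pm i\sqrt{\frac{\sqrt{p(\zeta)p^{*}(\zeta)}}{2}-1}$ for $\zeta\in\bndry D$, so the essential spectrum is $\{0\}$ if and only if $\frac{\sqrt{p(\zeta)p^{*}(\zeta)}}{2}=1$, i.e. $p(\zeta)p^{*}(\zeta)=4$, for every $\zeta\in\bndry D$. Writing $p^{*}=p/(p-1)$ turns this into $p(\zeta)^2/(p(\zeta)-1)=4$, and the elementary inequality $t^2/(t-1)\ge 4$ for $t>1$ (equivalently $\sqrt{pp^{*}}\ge 2$), whose unique minimizer is $t=2$ with value $4$, shows that $p(\zeta)p^{*}(\zeta)=4$ holds if and only if $p(\zeta)=2$. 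Thus $\Aop_\mu$ is compact exactly when $p(\zeta)\equiv 2$ on $\bndry D$.

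Finally I would invoke Proposition~\ref{P:pcon} to convert this analytic condition into the geometric conclusion: in the present smooth, strongly convex setting the identity $p(\zeta)\equiv 2$ on $\bndry D$ characterizes precisely the ellipsoids $\{(z_1,z_2)\st a_1|z_1|^2+a_2|z_2|^2<1\}$. For the converse implication I would note that such an ellipsoid is literally the domain $D_{2,a_1,a_2}\in\PP$, so its osculating function is constantly $p(\zeta)\equiv 2$, whence $p(\zeta)p^{*}(\zeta)\equiv 4$ and the essential spectrum of $\Aop_\mu$ collapses to $\{0\}$, making $\Aop_\mu$ compact by the dictionary of the first paragraph.

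I expect the only genuine content to reside in this last rigidity step, which is the province of Proposition~\ref{P:pcon}; the remainder is the compactness/essential-spectrum correspondence for normal operators together with the scalar inequality $\sqrt{pp^{*}}\ge 2$. The main point requiring care is simply to track both directions of the ``if and only if,'' verifying that $p\equiv 2$ is not merely necessary but also sufficient for $D$ to have the stated quadratic form, so that the characterization is sharp.
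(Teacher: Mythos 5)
Your proposal is correct and takes essentially the same route as the paper, whose proof is precisely the one-line combination of Theorem \ref{T:smksess} (the essential spectrum collapses to $\{0\}$ exactly when $\sqrt{p(\zeta)p^{*}(\zeta)}\equiv 2$, i.e.\ $p\equiv 2$, by the same scalar inequality $t^2/(t-1)\ge 4$ with equality only at $t=2$) with the rigidity statement of Proposition \ref{P:pcon}. The points you spell out explicitly --- the compactness/essential-spectrum dictionary for diagonalizable anti-self-adjoint operators and the converse direction for the ellipsoid --- are exactly what the paper leaves implicit, so no new ideas or deviations are involved.
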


(See the end of \S \ref{S:Lt} for related results.)

\medskip

The results outlined above should be contrasted with work of
Kerzman and Stein \cite{KS1} on a related operator $\Hop$ of Cauchy-Fantappi\`e type due to Henkin \cite{Hen} and Ram\'irez \cite{Ram}. This operator is based on the (quadratic) Levi polynomial rather than the linear functions of $w$ appearing in \eqref{E:gen};  it may be defined on any strongly pseudoconvex domain with $C^3$-smooth boundary.  Kerzman and Stein show that the operator $\Hop$ is a compact perturbation of the Szeg\H o projection defined with respect to surface measure $\sigma$  (see the end of \S \ref{S:Lt}); it follows that $\Hop_\sigma^* \Hop$ has essential spectrum $\{0,1\}$, and $\Hop_\sigma^*-\Hop$ has essential spectrum $\{0\}$. Thus $\Hop$ provides more direct access to the Szeg\H o projection, while  $\Lop$ has a more informative spectral theory.

\medskip

The plan of the paper proceeds as follows.  

In \S \ref{S:geo} we provide more details about the classes of domains under study, relating properties of a Reinhardt domain $D$ to the geometry of the curve $\gamma_+=\bndry D\cap\R^2_+$.  We prove the osculation results mentioned above; the corresponding exponent $p$ defines a continuous map from $\gamma_+$  to the interval $(1,\infty)$.  We also introduce a special parameter $s$ on $\gamma_+$ which plays an important role throughout the rest of the paper, and we characterize the classes  $\tilde \RR$, $\RR$ and $\PP$ in terms of $p$ as a function of $s$.

In \S \ref{S:Lt}
we 
 present the basic theory of the Leray transform for domains in the class $\tilde \RR$, confirming in particular that the reproducing property for holomorphic functions still holds even when the domains are less than $C^2$-smooth.  We introduce a special class of measures on $\bndry D$, the {\em admissible} measures; in essence, a rotation-invariant measure $\mu$ on $\bndry D$ is admissible if and only if $\mu$ is finite and $\Lop$ maps $L^2(\bndry D,\mu)$ to holomorphic functions on $D$. We also discuss norms of the  Fourier pieces of $\Lop$ (and of $\Lop^*_\mu\Lop$ and $\Aop_\mu$) and explain their relation to properties of the overall operators.

\S \ref{S:bdm} contains more information about boundary measures and geometry, confirming in particular that for $D\in\RR$ a measure of order $q$ is admissible if and only if condition \eqref{E:rc0} holds.

In \S \ref{S:as} we perform some asymptotic analysis of the norms of the Fourier pieces and use these results to prove Theorems \ref {T:rress} and \ref {T:rress-ks}.

\S \ref{S:ce} contains examples of domains for which the $L^2$-boundedness of the Leray transform fails (with respect to any admissible measure, in particular surface measure) due to lack of boundary regularity or  lack of strong convexity away from the axes. It also contains an example of a domain
in $\tilde \RR\setminus \RR$ with the property that surface measure is not admissible but measures of order $q$ are admissible when $|q|<1$.  In this case, $\Lop$ is not bounded on $L^2(\bndry D,\mu)$ for any rotation-invariant measure $\mu$.

In \S \ref{S:rr} we present the duality results mentioned earlier, and \S \ref{S:conc} contains a few concluding remarks.

\bigskip
 
\noindent{\bf Acknowledgment.} We are grateful to M. Lacey for helpful discussions, and to and E. M. Stein for 
raising the questions that have led to Examples 1 and 2
in \S 6.

\section{Geometric considerations}\label{S:geo}

Let $D\subset \C^2$ be a Reinhardt domain. Set
\begin{align}\label{E:gamma-def}
\gamma &=\gamma_D=\bndry D\cap \R_{\ge0}^2=\bndry D\cap \left( [0,\infty)\times[0,\infty) \right);\\
\label{E:gamma0-def}
\gamma_+ &=
 \, \bndry D\cap \R_{+}^2
=\left(\bndry D\setminus\{\zeta_1\zeta_2=0\}\right)\cap \R_{\ge0}^2\, .
\end{align}
(Here we are viewing $\R^2$ as a submanifold of $\C^2$.)

\begin{Prop}\label{P:transv}
In this situation, if $D$ has $C^k$-smooth boundary ($k\ge1$)  then the following will hold.

\refstepcounter{equation}\label{N:transv}
\begin{enum}
\item $i\R^2\subset T_\zeta\bndry D$ for each $\zeta\in\gamma$. \label{I:im}
\item $\bndry D$ meets $\R^2$ transversally. \label{I:transv}
\item $\gamma$ is a $C^k$-smooth 1-manifold. \label{I:1-mdf}
\item If $\zeta\in\gamma$ and $\zeta_1=0$ then $T_\zeta\bndry D=\C\times i\R$ and $T_\zeta \gamma = \R\times\{0\}$. \label{I:axis2}
\item If $\zeta\in\gamma$ and $\zeta_2=0$ then $T_\zeta\bndry D=i\R\times \C$  and $T_\zeta \gamma = \{0\}\times\R$.\label{I:axis1}
\end{enum}
\end{Prop}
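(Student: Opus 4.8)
The plan is to exploit the symmetries of $D$ to produce a defining function whose imaginary-direction partials (and, on the axes, the appropriate real-direction partial) vanish along $\R^2$, after which all five assertions reduce to linear algebra on $\ker d\rho_\zeta$. Since $D$ is Reinhardt, membership in $D$ depends only on $(|\zeta_1|,|\zeta_2|)$, so $D$ is invariant not just under the rotations \eqref{E:rot} but also under the conjugations $\zeta_j\mapsto\bar\zeta_j$. Let $G$ be the compact group generated by all of these; every element of $G$ acts on $\C^2=\R^4$ as a Euclidean isometry preserving $D$. Starting from any $C^k$ defining function $\rho_0$, I would average over $G$ to obtain $\rho=\int_G\rho_0\circ g\,dg$, which is $C^k$ and $G$-invariant. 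The crucial consequence is that $G$-invariance forces $\rho$ to be an even function of each of the four real coordinates $x_1,y_1,x_2,y_2$ \emph{separately} (evenness in $y_j$ from the conjugations, evenness in $x_j$ from composing a conjugation with a rotation by $\pi$); hence $\rho_{y_1}$ and $\rho_{y_2}$ vanish on $\R^2$, while $\rho_{x_j}$ vanishes on the axis $\{x_j=0\}$.

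For \itemref{N:transv}{I:im}, at any $\zeta\in\gamma\subset\R^2$ one has $\rho_{y_1}(\zeta)=\rho_{y_2}(\zeta)=0$; a vector $(i s,i t)\in i\R^2$ is in real coordinates $(0,s,0,t)$, and $d\rho_\zeta(0,s,0,t)=s\,\rho_{y_1}(\zeta)+t\,\rho_{y_2}(\zeta)=0$, so $i\R^2\subset\ker d\rho_\zeta=T_\zeta\bndry D$. For \itemref{N:transv}{I:transv}, transversality of $\bndry D$ and $\R^2$ at $\zeta$ is equivalent to $\R^2\not\subset T_\zeta\bndry D$, i.e. to $d\rho_\zeta|_{\R^2}\neq0$; since $d\rho_\zeta|_{\R^2}=\rho_{x_1}(\zeta)\,dx_1+\rho_{x_2}(\zeta)\,dx_2$ and the full gradient is nonzero with vanishing $y$-components, the pair $(\rho_{x_1},\rho_{x_2})(\zeta)$ cannot vanish, which gives the claim.

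For \itemref{N:transv}{I:axis2} take $\zeta\in\gamma$ with $\zeta_1=0$; then additionally $\rho_{x_1}(\zeta)=0$ by evenness in $x_1$ on $\{x_1=0\}$, so the only surviving partial is $\rho_{x_2}(\zeta)\neq0$ and $d\rho_\zeta=\rho_{x_2}(\zeta)\,dx_2$. Hence $T_\zeta\bndry D=\ker d\rho_\zeta=\{v_{x_2}=0\}=\C\times i\R$, and intersecting with $\R^2$ (legitimate by the transversality just proved) yields $T_\zeta\gamma=\R\times\{0\}$; assertion \itemref{N:transv}{I:axis1} is identical after exchanging the indices $1$ and $2$. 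Finally, for \itemref{N:transv}{I:1-mdf}, away from the axes $\gamma_+$ is the transverse intersection of the $C^k$ hypersurface $\bndry D$ with the plane $\R^2$, hence a $C^k$ $1$-manifold by the preimage theorem; at an axis point the computation just made shows $\bndry D\cap\R^2$ is $C^k$ with tangent line $\R\times\{0\}$, transverse to the axis $\{x_1=0\}$, so the constraint $x_1\ge0$ cuts out a $C^k$ arc with that endpoint, and the pieces fit together to present $\gamma$ as a $C^k$ $1$-manifold with boundary on the axes.

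The one genuinely delicate point is confirming that averaging preserves the defining-function property, i.e. that $\nabla\rho\neq0$ on $\bndry D$; a priori the averaged gradients could cancel. Here I would use that the generators of $G$ are \emph{isometries}: for $\zeta\in\bndry D$ and $g\in G$ one has $g\zeta\in\bndry D$, and since $g$ is orthogonal, $\nabla(\rho_0\circ g)(\zeta)=(dg|_\zeta)^{-1}\nabla\rho_0(g\zeta)$ is a \emph{positive} multiple of the common outward normal $\nu_\zeta$; integrating over $G$ keeps $\nabla\rho(\zeta)$ a positive multiple of $\nu_\zeta$, in particular nonzero. Everything past this step is elementary linear algebra on $\ker d\rho_\zeta$ together with the standard transversality theorem, so I expect no further obstruction; the only bookkeeping is the $C^k$ structure of $\gamma$ up to its axis endpoints, which is supplied by the tangent-line computation in \itemref{N:transv}{I:axis2}–\itemref{N:transv}{I:axis1}.
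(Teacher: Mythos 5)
Your proof is correct, but it takes a genuinely different route from the paper's. The paper never constructs an invariant defining function: it proves \itemref{N:transv}{I:im} off the axes by noting that the torus orbit $\{(e^{i\theta_1}\zeta_1,e^{i\theta_2}\zeta_2)\st \theta_1,\theta_2\in\R\}$ lies in $\bndry D$ and its tangent plane at a point of $\gamma_+$ is exactly $i\R^2$; it then extends \itemref{N:transv}{I:im} to the axis points by continuity of $\zeta\mapsto T_\zeta\bndry D$ (this is where $C^1$-smoothness enters), and obtains \itemref{N:transv}{I:axis2}--\itemref{N:transv}{I:axis1} from the invariance of $T_\zeta\bndry D$ under the rotations that fix an axis point; items \itemref{N:transv}{I:transv} and \itemref{N:transv}{I:1-mdf} then follow just as in your argument, from $i\R^2+\R^2=\C^2$ and the transverse intersection theorem. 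Your averaging construction replaces both the continuity argument and the stabilizer argument by parity: evenness of the symmetrized $\rho$ in $y_1,y_2$ yields \itemref{N:transv}{I:im} at all points of $\gamma$ simultaneously, and evenness in $x_1$ yields $\rho_{x_1}=0$ on $\{x_1=0\}$, hence \itemref{N:transv}{I:axis2} outright --- note that the conjugation symmetry you exploit is never invoked by the paper, although it is indeed available for Reinhardt domains. What your route buys is uniformity (all five items reduce to the same linear algebra on $\ker d\rho_\zeta$, with no limiting argument at the axis points) and a more careful treatment of \itemref{N:transv}{I:1-mdf} at the endpoints of $\gamma$, where the paper's one-line appeal to the transverse intersection theorem is terser; its one genuine pitfall, possible cancellation of gradients under averaging, is exactly the point you identified and correctly disposed of with the isometry/outward-normal argument, without which the symmetrization would not be known to produce a defining function. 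What the paper's route buys is brevity: it needs no auxiliary function, no Haar measure, and only the rotational symmetry appearing in the definition of a Reinhardt domain.
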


\begin{proof}
If $\zeta_1\zeta_2\ne0$ then  
\itemref{N:transv}{I:im}  follows from from the fact that \[\{(e^{i\theta_1}\zeta_1,e^{i\theta_2}\zeta_2) \st \theta_1,\theta_2\in\R\}\subset\bndry D.\] The continuous dependence of $T_\zeta\bndry D$ on $\zeta$  now forces  \itemref{N:transv}{I:im} to hold also when $\zeta$ lies on one of the axes.

It follows now that $T_\zeta\bndry D + \R^2=\C^2$ for all $\zeta\in\gamma$ which shows that \itemref{N:transv}{I:transv} holds, and the transverse intersection theorem now implies \itemref{N:transv}{I:1-mdf}.

Item \itemref{N:transv}{I:axis2} follows from \itemref{N:transv}{I:im} and the invariance of $T_\zeta\bndry D$ under rotations in the $\zeta_1$ variable.  The proof of \itemref{N:transv}{I:axis1} is similar.
\end{proof}

\medskip

As in the introduction, we let $\tilde\RR$ denote the space of bounded convex complete $C^1$-smooth Reinhardt domains  in $\C^{2}$  that are $C^2$-smooth and strongly convex away from the axes $\{\zeta_{1}\zeta_{2}=0\}$.   Then $\gamma$ will be a $C^1$-smooth curve meeting both axes, while $\gamma_+$
will be $C^2$-smooth with non-vanishing curvature.  It follows easily that $\gamma$ will be the graph of a concave function, and in fact we easily verify the following.

\begin{Prop}\label{P:D-phi}
A Reinhardt domain $D$ belongs to $\tilde\RR$ if and only if it may be described as 
\begin{align}\label{E:DgenRein}
D&= \{(z_{1},z_{2})\st  |z_{2}|<\phi(|z_{1}|)\, ,\ |z_{1}|<b_1\}
\end{align}
where $b_1>0$ and $\phi$ is a continuous function on $[0, b_1]$ satisfying
\begin{subequations}\label{E:phi-cond}
\begin{align}
&\phi>0\text{ on } [0, b_1);\label{E:phi-pos}\\
&\phi(b_1)=0;\label{E:r-van}\\
&\phi' \text{ is continuous on $[0, b_1)$ and negative on $(0, b_1)$};\label{E:phi-dec}\\
&\phi'(0)=0;\label{E:horiz-phi}\\
&\phi'(t)\to-\infty\text{ as }t\to b_1;\label{E:vert-phi}\\
&\phi'' \text{ is continuous and negative on $(0, b_1)$}.\label{E:phi-concave}
\end{align}
\end{subequations}
\end{Prop}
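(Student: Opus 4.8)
The plan is to reduce the four-real-dimensional convexity and smoothness conditions defining $\tilde\RR$ to one-variable conditions on the profile of the generating curve, and then to read off \eqref{E:phi-cond}. Since $D$ is a complete Reinhardt domain it is determined by the planar region $\omega=\{(|z_1|,|z_2|)\st (z_1,z_2)\in D\}\subset\R^2_{\ge0}$, and completeness says exactly that $\omega$ is a \emph{lower set}: if $(t_1,t_2)\in\omega$ and $0\le s_j\le t_j$ then $(s_1,s_2)\in\omega$. The first key observation is that, for such $D$, convexity of $D$ in $\C^2$ is equivalent to convexity of $\omega$ in $\R^2$. One implication is immediate because $\omega=D\cap\R^2_{\ge0}$ is a slice of $D$ by a convex set; for the converse one uses the triangle inequality $|\tfrac{z_j+w_j}{2}|\le\tfrac{|z_j|+|w_j|}{2}$ together with the lower-set property to push the midpoint of two points of $D$ back into $D$. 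A bounded, relatively open, convex lower set in $\R^2_{\ge0}$ has northeast boundary running from a point $(b_1,0)$ to a point $(0,b_2)$ along the graph of a decreasing concave function; this produces $\phi$ via $\phi(t)=\sup\{s\st (t,s)\in\omega\}$, with $b_1$ the point where $\phi$ vanishes.

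For the forward implication I would start from $D\in\tilde\RR$ and invoke Proposition \ref{P:transv}: since $D$ is $C^1$ the curve $\gamma$ is a $C^1$ $1$-manifold meeting both axes, and the tangent-line computations \itemref{N:transv}{I:axis2} and \itemref{N:transv}{I:axis1} say precisely that $\gamma$ has a horizontal tangent at its endpoint on the $\zeta_2$-axis and a vertical tangent at its endpoint on the $\zeta_1$-axis. Reading $\gamma$ as the graph $x_2=\phi(x_1)$, these translate into $\phi'(0)=0$, giving \eqref{E:horiz-phi}, and $\phi'(t)\to-\infty$ as $t\to b_1$, giving \eqref{E:vert-phi}; the conditions \eqref{E:phi-pos}, \eqref{E:r-van} and the sign of $\phi'$ in \eqref{E:phi-dec} come from $\omega$ being a bounded convex lower set with nonempty interior, while the $C^2$-smoothness and nonvanishing curvature of $\gamma_+$ away from the axes give that $\phi''$ is continuous and, by concavity together with nondegeneracy, strictly negative on $(0,b_1)$, i.e. \eqref{E:phi-concave} and the continuity asserted in \eqref{E:phi-dec}.

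For the converse I would take $\phi$ satisfying \eqref{E:phi-cond} and let $D$ be the domain \eqref{E:DgenRein}. It is bounded and, since $\phi$ is decreasing, a complete Reinhardt domain; concavity of $\phi$ makes $\omega$ convex, hence $D$ convex by the reduction above. The substantive point is global $C^1$-regularity together with $C^2$-regularity and strong convexity off the axes. Away from the axes $\bndry D$ is foliated by the rotation tori over $\gamma_+$, so its smoothness and convexity are governed by $\gamma_+$, and $\phi\in C^2$ with $\phi''<0$ yields a $C^2$ profile of nonvanishing curvature. At the top point $(0,b_2)$ I would use the defining function $\rho=|z_2|^2-\phi(|z_1|)^2$, valid where $z_2\ne0$, and observe that a $C^1$ radial function whose radial derivative vanishes at $0$ is $C^1$ across the origin; since $\phi'(0)=0$ this makes $\rho$ of class $C^1$ with nonvanishing gradient. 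At the point $(b_1,0)$ the tangent is vertical, so I would instead use the inverse profile $\psi=\phi^{-1}$, for which $\phi'(t)\to-\infty$ forces $\psi'(0)=0$, and run the same radial-smoothness argument with $\rho=|z_1|^2-\psi(|z_2|)^2$.

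The step I expect to be the main obstacle is matching the intrinsic notion of strong convexity of $\bndry D$ as a real hypersurface in $\C^2$ with the one-variable condition $\phi''<0$ (and, in the forward direction, extracting $\phi''<0$ from strong convexity). This requires decomposing the second fundamental form of $\bndry D$ at a point off the axes into the two rotational directions, whose contributions are automatically positive when $|z_1|,|z_2|>0$, and the direction tangent to $\gamma_+$, whose contribution is the curvature of $\gamma_+$; strong convexity then holds exactly when this curvature is positive, i.e. when $\phi''<0$. The $C^1$-regularity at the two axis points is the other delicate issue, but it is handled cleanly by the radial-function argument once the correct defining function is chosen — the graph near the top and the inverse graph near the $\zeta_1$-axis.
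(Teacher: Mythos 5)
Your proposal is correct and follows essentially the same route as the paper, which derives Proposition \ref{P:D-phi} from Proposition \ref{P:transv} (tangency of $\gamma$ at the two axes giving \eqref{E:horiz-phi} and \eqref{E:vert-phi}) together with the observation that $\gamma$ is the graph of a concave function and that $\gamma_+$ is $C^2$ with non-vanishing curvature, leaving the rest as an easy verification. The details you supply — the equivalence of convexity of $D$ with convexity of its profile region, the radial-smoothness argument for $C^1$-regularity at the axis points, and the diagonalization of the second fundamental form matching \eqref{E:pr-curv} — are precisely the omitted verifications, carried out correctly.
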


Let $\rr$ be the map $\bndry D\to\gamma, (\zeta_1,\zeta_2)\mapsto (|\zeta_1|,|\zeta_2|)$.  Then any function $f$ on $\gamma$ induces a rotation-invariant function $f\circ \rr$ on $\bndry D$,  and every rotation-invariant function $f$ on $\bndry D$ may be recovered from its values on $\gamma$ by the formula
\begin{equation}\label{E:pb-gamma}
f=f\circ \rr.
\end{equation}

We will use $(r_1,r_2)$ as coordinates on $\R_{\ge0}^2$; thus \[\gamma=\{(r_1,r_2)\st 0\le r_1\le b_1, \, r_2=\phi(r_1)\}.\]
Extending these functions via \eqref{E:pb-gamma} we also have $r_j=|z_j|$ on $\bndry D$.

 Away from the axes, domains in $\tilde\RR$ are modeled after the $\PP$-domains described in \eqref{E:PPdom} and \eqref{E:Pball} in the following sense.

\begin{Prop}\label{P:osc} Suppose $D\in\tilde\RR$. Then, 
for every $\zeta\in\bndry D$ with $\zeta_{1}\zeta_{2}\ne 0$ there is a unique  $D_{p(\zeta),a_{1}(\zeta),a_{2}(\zeta)}\in\PP$ osculating $\bndry D$ to second order at $\zeta$.
\end{Prop}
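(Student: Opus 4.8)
The plan is to exploit the Reinhardt symmetry to collapse the osculation problem for the three-real-dimensional hypersurfaces in $\C^2$ to a second-order contact problem for plane curves, and then to solve the resulting three scalar equations for the three parameters $p(\zeta),a_1(\zeta),a_2(\zeta)$.

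\emph{Reduction to the profile curves.} By Proposition \ref{P:D-phi} the boundary of $D$ is the graph $|z_2|=\phi(|z_1|)$ near $\zeta$, and the boundary of any $D_{p,a_1,a_2}\in\PP$ is likewise a graph $|z_2|=\psi(|z_1|)$, where $\psi$ is the profile determined by $a_1 t^p + a_2\,\psi(t)^p = 1$. Writing $t_0=|\zeta_1|>0$, I would parametrize both boundaries near $\zeta$ by $(\theta_1,\theta_2,t)\mapsto(e^{i\theta_1}t,\,e^{i\theta_2}h(t))$, with $h=\phi$ in one case and $h=\psi$ in the other; this is legitimate because $\zeta_1\zeta_2\neq0$ and, by Proposition \ref{P:transv}\,\itemref{N:transv}{I:transv}, $\bndry D$ meets $\R^2$ transversally. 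Inspection of the $2$-jet of this parametrization at $(\arg\zeta_1,\arg\zeta_2,t_0)$ shows that it depends only on the base data common to both maps together with the three numbers $h(t_0),h'(t_0),h''(t_0)$. Hence the two hypersurfaces osculate to second order at $\zeta$ if and only if the profiles have second-order contact at $t_0$, i.e.
\[
\psi(t_0)=\phi(t_0),\qquad \psi'(t_0)=\phi'(t_0),\qquad \psi''(t_0)=\phi''(t_0).
\]

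\emph{The contact equations.} Implicit differentiation of $a_1 t^p + a_2\,\psi^p = 1$ gives, at any point with $t,\psi>0$,
\[
\psi' = -\,\frac{a_1 t^{p-1}}{a_2\,\psi^{p-1}} = -\lambda\,\frac{\psi}{t},
\qquad
\psi'' = (p-1)\left(\frac{\psi'}{t}-\frac{(\psi')^2}{\psi}\right),
\]
where $\lambda = a_1 t^p/(a_2\,\psi^p)$ is the ratio of the two terms in the defining relation. Substituting $\psi(t_0)=\phi(t_0)=|\zeta_2|$, I would read off $\lambda$ from the first-order condition and then $p$ from the second-order condition:
\[
\lambda = -\,\phi'(t_0)\,\frac{t_0}{|\zeta_2|},
\qquad
p-1 = \frac{\phi''(t_0)}{\dfrac{\phi'(t_0)}{t_0}-\dfrac{\phi'(t_0)^2}{|\zeta_2|}}.
\]

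\emph{Solving and checking the ranges.} Because $\phi'(t_0)<0$ and $\phi''(t_0)<0$ by \eqref{E:phi-dec} and \eqref{E:phi-concave}, the number $\lambda$ is positive and the right-hand side of the last display is a quotient of two (finite, nonzero) negative quantities, hence a finite positive number; thus $p\in(1,\infty)$ is determined uniquely. Then $\lambda$ fixes the ratio $a_1 t_0^p : a_2|\zeta_2|^p = \lambda:1$, and the normalization $a_1 t_0^p + a_2|\zeta_2|^p = 1$ forces the unique positive values
\[
a_1 = \frac{\lambda}{1+\lambda}\,t_0^{-p},
\qquad
a_2 = \frac{1}{1+\lambda}\,|\zeta_2|^{-p}.
\]
These satisfy exactly the membership conditions for $\PP$ in \eqref{E:PPdom}, producing the desired osculating domain $D_{p(\zeta),a_1(\zeta),a_2(\zeta)}$ and exhibiting it as unique. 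I expect the only delicate point to be the reduction step: one must verify carefully that second-order osculation of the three-real-dimensional hypersurfaces is genuinely equivalent to $C^2$-contact of the one-dimensional profiles, which is precisely where the transversality of Proposition \ref{P:transv} and the invariance of both boundaries under the common torus action are needed.
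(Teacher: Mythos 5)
Your proposal is correct and follows essentially the same route as the paper: reduce via the Reinhardt symmetry to second-order contact of the profile curves in $\R_+^2$, match $\phi,\phi',\phi''$ at the point, and solve the resulting three equations for the three parameters, using \eqref{E:phi-dec} and \eqref{E:phi-concave} to check $p>1$ and $a_1,a_2>0$. Your implicit-differentiation bookkeeping with the ratio $\lambda$ merely reorganizes the paper's algebra--your expressions for $p$, $a_1$, $a_2$ simplify to exactly the paper's formulas \eqref{E:p-one}, \eqref{E:a1def}, \eqref{E:a2def}.
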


\begin{proof} We start by considering points $\zeta=(r_1,r_2)\in\gamma_+$.  Noting that the curve $\bndry D_{p,a_{1},a_{2}}\cap\R_+^2$ is given by
$r_2=\sqrt[p]{\frac{1 - a_1r_1^p}{a_2}}$, we see that we need to determine  $p=p(\zeta)$, $a_{1}=a_1(\zeta)$ $a_2=a_2(\zeta)$ so that
\begin{align*}
\phi(r_1)&= \sqrt[\uproot{4}p]{\frac{1-a_{1}r_1^{p}}{a_{2}}}\\
\phi'(r_1)&= \frac{d}{dr_1}\sqrt[\uproot{4}p]{\frac{1-a_{1}r_1^{p}}{a_{2}}}
=-\frac{a_{1} r_1^{p-1} \left(\frac{1-a_{1}r_1^{p}}{a_{2}}\right)^{\frac{1}{p}-1}}{a_{2}} \\
\intertext{and}\\
\phi''(r_1)&= \frac{d^{2}}{dr_1^{2}}\sqrt[\uproot{4}p]{\frac{1-a_{1}r_1^{p}}{a_{2}}}
=\frac{a_{1} (1-p) r_1^{p-2}\left(\frac{1-a_{1}r_1^{p}}{a_{2}}\right)^{\frac{1}{p}}}{\left(1-a_{1}r_1^{p}\right)^{2}}.
\end{align*}

Substituting $1-a_{1}r_1^{p}=a_{2}\phi^{p}(r_1)$ throughout the second and third equations and solving for $a_{1}, a_{2}$ we obtain
\begin{align}
a_{1}(\zeta)&= \frac{(1-p)\left(\phi'(r_1)\right)^{2}}{r_1^{p}\phi(r_1)\phi''(r_1)}\label{E:a1def}\\
a_{2}(\zeta)&= \frac{(p-1)\phi'(r_1)}{r_1\phi^{p}(r_1)\phi''(r_1)}.\label{E:a2def}
\end{align}

Plugging these values back into the first equation and solving for $p$ we obtain 
\begin{equation}\label{E:p-one}
p(\zeta)\ =\ 1\ + \ \frac{r_1\phi(r_1)\phi''(r_1)}{\phi'(r_1)(\phi(r_1)-r_1\phi'(r_1))}.
\end{equation}

\medskip

Using \eqref{E:phi-cond} it is easy to check that $p(\zeta)>1$ 
and that $a_{1}(\zeta)$ and $a_{2}(\zeta)$ are positive.

We finish by extending $p, a_1$ and $a_2$ to functions on $\bndry D\setminus\{\zeta_1\zeta_2=0\}$ by setting $p=p\circ\rr, a_1=a_1\circ\rr$ and $a_2=a_2\circ\rr$ as in \eqref{E:pb-gamma}; rotation-invariance guarantees that the extended functions do what is required.
\end{proof}

Let $D$ be an  $\tilde\RR$-domain. Much of what we do below is made simpler by the introduction of  the following  auxiliary parameter on $\gamma_+$:

  \begin{equation}\label{E:subst2}
  s
     = \frac{-r_1\phi'(r_1)}{\phi(r_1)-r_1\phi'(r_1)}
   =\frac{-r_2\inv\,dr_2}{r_1\inv\,dr_1-r_2\inv\,dr_2}
   =1-\frac{r_1\inv\,dr_1}{r_1\inv\,dr_1-r_2\inv\,dr_2}.
  \end{equation}
  We note for later use that
  \begin{equation}\label{E:hidden-u}
\frac{dr_2}{r_2}= - \frac{s}{1-s} \frac{dr_1}{r_1}
\end{equation}
and
\begin{equation}\label{E:hidden-u-var}
\frac{dr_2}{dr_1}= - \frac{s}{1-s} \frac{r_2}{r_1}.
\end{equation}

  Our assumptions \eqref{E:phi-cond}  on $\phi$ 
  yield
  \begin{subequations}\label{E:lim-int} 
  \begin{align}
  &\quad\, s>0 \text{ on } \gamma_+\\ 
  &\lim\limits_{\zeta\to (0,b_2)} s(\zeta) = 0\, \label{E:s-a1}\\
   &\lim\limits_{\zeta\to (b_1,0)} s(\zeta) = 1\, \label{E:s-a2};
  \end{align}
  \end{subequations}
  moreover, differentiating \eqref{E:subst2} with respect to $r_1$ and using \eqref{E:p-one} we obtain
  \begin{equation}\label{E:ds/dt}
  \frac{ds}{dr_1} = \frac{sp}{r_1}.
  \end{equation}
  Thus $s$ is $C^1$-smooth on $\gamma_+$ and extends to a monotone continuous function (hence a homeomorphism) mapping $\gamma$ onto the interval $[0,1]$.

 Applying \eqref{E:subst2} to \eqref{E:ds/dt} we obtain the companion formula
  \begin{equation}\label{E:ds/dt-rev}
  \frac{d(1-s)}{dr_2} = \frac{(1-s)p}{r_2}.
  \end{equation}

The functions $s$ and $p$ determine the coordinate functions $r_1, r_2$ (up to multiplicative constants) as follows:
\begin{align*}
r_1(\zeta)=&b_1 \exp\left( -\int_\zeta^{(b_1,0)} \frac{ds}{sp}\right) 
\\
r_2(\zeta)=&b_2 \exp\left(\int_{(0,b_2)}^\zeta \frac{d(1-s)}{(1-s)p}\right). 
\end{align*}
(The integrals are taken over arcs of $\gamma$.)

Let 
\begin{equation}\label{E:pb-def}
\breve p=p\circ s\inv:(0,1)\to(1,\infty)
\end{equation}
i.e., $\breve p$ gives $p$ as a function of $s$. 
  Then we have
  \begin{subequations} \label{E:r-p-breve}
 \begin{align}
r_1 =&b_1 \exp\left( -\int_s^1 \frac{dt}{t\,\breve p(t)}\right) \label{E:r1-s}
\\
r_2 =&b_2 \exp\left(-\int_0^s \frac{dt}{(1-t)\,\breve p(t)}\right) \label{E:r2-s}
\end{align}
\end{subequations}
on $\gamma_+$
and so 
  \begin{multline}\label{E:s-param}
\gamma_+=\bigg\{\left(
b_1 \exp\left( -\int_s^1 \frac{dt}{t\,\breve p(t)}\right),
b_2 \exp\left(-\int_0^s \frac{dt}{(1-t)\,\breve p(t)}\right)
\right)\st \\
0<s<1\bigg\};
\end{multline}
thus also
 \begin{multline}\label{E:s-theta-param}
\bndry D\setminus \{\zeta_1\zeta_2=0\}\\
=\biggl\{\left(
b_1 \exp\left( -\int_s^1 \frac{dt}{t\,\breve p(t)}\right)e^{i\theta_1},
b_2 \exp\left(-\int_0^s \frac{dt}{(1-t)\,\breve p(t)}\right)e^{i\theta_2}
\right)\st \\
0<s<1, \theta_1\in[0,2\pi), \theta_2\in[0,2\pi)\biggr\}.
\end{multline}

\begin{Thm}\label{T:rr-tilde-cond}
The construction above defines a one-to-one correspondence between $\tilde\RR$ and the set of triples $\breve p, b_2, b_1$, where $b_2, b_1$ are positive constants and $\breve p:(0,1)\to(1,\infty)$ is a continuous function satisfying
\begin{subequations}\label{E:p-tildeRR-cond}
\begin{align}
\int_0^1 \frac{ds}{s\,\breve p(s)}&=\infty\label{E:a1-stretch}\\
\int_0^1 \frac{ds}{(1-s)\,\breve p(s)}&=\infty\label{E:a2-stretch}\\
\int_0^1 \frac{ds}{s\,\breve p^*(s)}&=\infty\label{E:horiz-pb}\\
\int_0^1 \frac{ds}{(1-s)\,\breve p^*(s)}&=\infty .\label{E:vert-pb}
\end{align}
\end{subequations}
Here, $\breve p^*(s)$ denotes the dual exponent of $\breve p(s)$ (that is $1/\breve p^*(s) + 1/\breve p(s) =1.$)
\end{Thm}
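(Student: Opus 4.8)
The plan is to leverage Proposition \ref{P:D-phi} to reduce the entire statement to a bijection between functions $\phi$ and functions $\breve p$. By that proposition, a domain $D\in\tilde\RR$ is precisely the data of a pair $(\phi,b_1)$ with $\phi$ satisfying \eqref{E:phi-cond}; writing $b_2:=\phi(0)$, the theorem becomes the assertion that the assignment $\phi\mapsto\breve p$ (given by \eqref{E:p-one} composed with $s\inv$, using Proposition \ref{P:osc} and the construction of $s$) is a bijection onto the continuous functions $\breve p:(0,1)\to(1,\infty)$ obeying \eqref{E:p-tildeRR-cond}, with inverse furnished by the parametrization \eqref{E:r1-s}--\eqref{E:r2-s}. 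So I would verify in turn: (i) $\phi\mapsto\breve p$ lands in the prescribed target; (ii) $\breve p\mapsto\phi$ lands back among functions satisfying \eqref{E:phi-cond}; and (iii) the two maps are mutually inverse.

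For the forward direction (i), continuity of $\breve p$ and its range $(1,\infty)$ follow from Proposition \ref{P:osc} together with the fact, recorded after \eqref{E:ds/dt}, that $s$ is a homeomorphism $\gamma\to[0,1]$. The two ``stretch'' conditions \eqref{E:a1-stretch}, \eqref{E:a2-stretch} are read off from the endpoint behavior: since the endpoints of $\gamma$ are $(0,b_2)$ and $(b_1,0)$, we have $r_1\to0$ as $s\to0$ and $r_2\to0$ as $s\to1$, and plugging these into \eqref{E:r1-s}, \eqref{E:r2-s} forces the two integrals to diverge. The remaining two conditions \eqref{E:horiz-pb}, \eqref{E:vert-pb} encode the tangency conditions \eqref{E:horiz-phi}, \eqref{E:vert-phi}. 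Using \eqref{E:hidden-u-var} we have $\phi'=-\tfrac{s}{1-s}\tfrac{r_2}{r_1}$, so $\phi'(0)=0$ is equivalent to the ratio $s/r_1\to0$ as $s\to0$; substituting \eqref{E:r1-s} and writing $\log s=-\int_s^1 \tfrac{dt}{t}$, one finds $\log(s/r_1)=-\int_s^1\tfrac{dt}{t\,\breve p^*(t)}-\log b_1$ after using $1-\tfrac1{\breve p}=\tfrac1{\breve p^*}$, so $\phi'(0)=0$ is equivalent to \eqref{E:horiz-pb}; the symmetric computation at $s\to1$ converts $\phi'(t)\to-\infty$ into \eqref{E:vert-pb}. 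This trading of the tangency conditions for the conjugate-exponent integrals, via the identity $\tfrac1t-\tfrac1{t\,\breve p}=\tfrac1{t\,\breve p^*}$, is the crux of the argument and the step I expect to require the most care.

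For the reverse direction (ii), I would start from $\breve p$ obeying \eqref{E:p-tildeRR-cond}, define $r_1(s),r_2(s)$ by \eqref{E:r1-s}, \eqref{E:r2-s}, and check that $s\mapsto r_1(s)$ is a strictly increasing homeomorphism $(0,1)\to(0,b_1)$ (the surjectivity using \eqref{E:a1-stretch}) while $s\mapsto r_2(s)$ is strictly decreasing onto $(0,b_2)$ (using \eqref{E:a2-stretch}); inverting $r_1(s)$ then defines $\phi$ through $r_2=\phi(r_1)$, a decreasing function with $\phi(0)=b_2$ and $\phi(b_1)=0$. I would then verify \eqref{E:phi-cond}: positivity and the endpoint values are immediate; differentiation gives $\phi'=-\tfrac{s}{1-s}\tfrac{r_2}{r_1}<0$, continuous, with the tangency conditions \eqref{E:horiz-phi}, \eqref{E:vert-phi} recovered from \eqref{E:horiz-pb}, \eqref{E:vert-pb} by reversing the computation above; and a direct differentiation (using $ds/dr_1=s\,\breve p(s)/r_1$ from \eqref{E:ds/dt}) yields $\phi''=-(\breve p(s)-1)(\phi')^2/(s\phi)<0$, which simultaneously gives continuity of $\phi''$ and concavity \eqref{E:phi-concave}. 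Proposition \ref{P:D-phi} then certifies $D\in\tilde\RR$.

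Finally, for (iii) I would confirm the two maps are mutually inverse. That reverse-then-forward is the identity reduces to recomputing $s$ and $p$ from the constructed $\phi$: substituting $\phi'=-\tfrac{s}{1-s}\tfrac{r_2}{r_1}$ into \eqref{E:subst2} returns the same parameter $s$ (a short algebraic check), and formula \eqref{E:p-one} combined with the computed value $\phi''=-(\breve p(s)-1)(\phi')^2/(s\phi)$ returns $p=\breve p(s)$. That forward-then-reverse is the identity is precisely the derivation of \eqref{E:r1-s}, \eqref{E:r2-s} already recorded in the text, which expresses $r_1,r_2$ in terms of $\breve p$ starting from $\phi$ via $s$ and $p$. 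Together these establish the claimed one-to-one correspondence.
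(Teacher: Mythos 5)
Your proposal is correct and follows essentially the same route as the paper: the forward direction derives \eqref{E:a1-stretch}--\eqref{E:a2-stretch} from the endpoint limits of $r_1,r_2$ and converts the tangency conditions \eqref{E:horiz-phi}, \eqref{E:vert-phi} into \eqref{E:horiz-pb}, \eqref{E:vert-pb} via the identity $\tfrac1t-\tfrac1{t\,\breve p}=\tfrac1{t\,\breve p^*}$ (the paper's \eqref{E:int1} and \eqref{E:lims/r}), while the reverse direction builds $\phi$ from \eqref{E:r1-s}--\eqref{E:r2-s} and verifies \eqref{E:phi-cond}, including the same formulas $\phi'=-\tfrac{s}{1-s}\tfrac{r_2}{r_1}$ and $\phi''=-(\breve p(s)-1)(\phi')^2/(s\phi)$, before invoking Proposition \ref{P:D-phi}. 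Your step (iii), explicitly checking that the two maps are mutually inverse, is a worthwhile bit of added care that the paper leaves implicit, but it does not change the substance of the argument.
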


\begin{proof}
Suppose that $D\in\tilde\RR$.  Then condition \eqref{E:a1-stretch} follows from \eqref{E:s-a1} and \eqref{E:r1-s}.  Similarly, condition \eqref{E:a2-stretch} follows from \eqref{E:s-a2} and \eqref{E:r2-s}.
Next, we observe that \eqref{E:r1-s} yields
\begin{equation}\label{E:int1}
\int_s^1 \frac{dt}{t\,\breve p^*(t)} =
-\log s + \log (r_1/b_1).
\end{equation}
Moreover, conditions  \eqref{E:horiz-phi}  and
\eqref{E:hidden-u-var} 
imply
\begin{equation}\label{E:lims/r}
\frac{s}{r_1}\to 0 \quad \mathrm{as}\ s\to 0,
\end{equation}
so that \eqref{E:horiz-pb} follows from \eqref{E:int1} and \eqref{E:lims/r}.
Identity  \eqref{E:vert-pb} follows by a parallel argument.

Suppose now that we are given positive constants $b_1, b_2$ together with a continuous function $\breve p$ satisfying \eqref{E:a1-stretch} through \eqref{E:vert-pb}. Then \eqref{E:s-param} describes an open arc $\gamma_+$ in $\R_+^2$, and conditions \eqref{E:a1-stretch} and \eqref{E:a2-stretch} imply that $\gamma_+$ extends to a closed arc $\gamma$ in $\R_{\ge0}^2$ with endpoints at $(0, b_2)$ and $(b_1,0)$.  The monotonicity of the resulting $r_1$ and $r_2$ as functions of $s$ (see \eqref{E:r1-s}, \eqref{E:r2-s}) shows that $\gamma$ is the graph of a continuous decreasing function $\phi$ on $[0,b_1]$ satisfying \eqref{E:phi-pos} and \eqref{E:r-van}.  Moreover, using \eqref{E:r1-s} and \eqref{E:r2-s} we find that
\begin{equation}\label{E:phi-diff}
\phi'(r_1) = \frac{dr_2/ds}{dr_1/ds}=-\frac{s}{1-s}\frac{r_2}{r_1}
\end{equation}
is continuous and negative on $(0,b_1)$.
 Taking
\eqref{E:int1} into account, we find that  condition \eqref{E:horiz-pb} implies 
\eqref{E:lims/r}, and using 
 \eqref{E:phi-diff} we see that $\phi'(r_1)\to 0$ as $r_1\to 0$; thus we have verified \eqref{E:phi-dec} and \eqref{E:horiz-phi}.  A similar argument allows us to deduce \eqref{E:vert-phi} from \eqref{E:vert-pb}.  Finally, using
\begin{equation*}
\frac{ds}{dr_1}=\frac{1}{dr_1/ds}=\frac{s\breve p(s)}{r_1}
\end{equation*}
to differentiate
 \eqref{E:phi-diff} we find that $\phi''(r_1)=-\frac{(\breve p(s)-1)s}{(1-s)^2}\frac{r_2}{r_1^2}$,   verifying \eqref{E:phi-concave}. We have shown that $\phi$ satisfies the conditions of Proposition \ref{P:D-phi}, thus we may use \eqref{E:DgenRein} to define the desired domain $D\in\tilde\RR$.
\end{proof}

\begin{Def}\label{D:gen}
We refer to the domain $D$ constructed at the end of the previous proof as the domain {\em generated by} $\breve p,b_2, b_1$.
\end{Def}
\begin{Remark}\label{R: ext-par} The parameterizations \eqref{E:s-param} and \eqref{E:s-theta-param} extend to parameterizations of all of $\gamma$ and $\bndry D$, respectively, with $s$ ranging over the closed interval $[0,1]$.
\end{Remark}

\begin{Lem}\label{L:HB}
For $D\in \tilde\RR$, $(w_1,w_2)\in \bar D$, $(r_1,r_2)\in\gamma_+$ with $(|w_1|,|w_2|)\ne(r_1,r_2)$ we have 
\begin{equation}\label{E:HB-s}
\frac{s}{r_1}|w_1|+\frac{1-s}{r_2}|w_2|<1.
\end{equation}
\end{Lem}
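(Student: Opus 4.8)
The plan is to recognize the linear functional $\ell(x_1,x_2)=\frac{s}{r_1}x_1+\frac{1-s}{r_2}x_2$ as the one whose level set $\ell=1$ is precisely the tangent line to the curve $\gamma$ at the point $(r_1,r_2)$, and then to exploit the strict concavity of $\phi$ recorded in \eqref{E:phi-concave} to show that the region $\{(x_1,x_2)\st 0\le x_1\le b_1,\ 0\le x_2\le\phi(x_1)\}$ --- which contains $(|w_1|,|w_2|)$ for every $(w_1,w_2)\in\bar D$ by \eqref{E:DgenRein} --- lies strictly below this tangent line except at $(r_1,r_2)$ itself. Since the inequality \eqref{E:HB-s} is exactly the statement $\ell(|w_1|,|w_2|)<1$, the whole argument reduces to a one-variable statement about $\phi$.

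Concretely, I would introduce $g(t)=\frac{s}{r_1}\,t+\frac{1-s}{r_2}\,\phi(t)$ for $t\in[0,b_1]$, where $s$, $r_1$, and $r_2=\phi(r_1)$ are the fixed data attached to the tangency point. First, $g(r_1)=s+(1-s)=1$ trivially. Next, differentiating gives $g'(t)=\frac{s}{r_1}+\frac{1-s}{r_2}\phi'(t)$, and evaluating at $t=r_1$ together with the identity \eqref{E:hidden-u-var} (which asserts $\phi'(r_1)=-\frac{s}{1-s}\frac{r_2}{r_1}$ at this point) yields $g'(r_1)=0$. Finally $g''(t)=\frac{1-s}{r_2}\phi''(t)$, which is strictly negative on $(0,b_1)$ by \eqref{E:phi-concave} since $\frac{1-s}{r_2}>0$. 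Hence $g$ is strictly concave on $[0,b_1]$ with an interior critical point at $t=r_1$ --- note $r_1\in(0,b_1)$ because $(r_1,r_2)\in\gamma_+$ --- so $r_1$ is the unique global maximum of $g$ and $g(t)<1$ for every $t\in[0,b_1]\setminus\{r_1\}$.

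To finish, I would transfer this back to $\bar D$. A point $(w_1,w_2)\in\bar D$ satisfies $|w_2|\le\phi(|w_1|)$ and $0\le|w_1|\le b_1$. Since both coefficients of $\ell$ are positive, monotonicity in the second variable gives $\ell(|w_1|,|w_2|)\le g(|w_1|)\le1$, with equality in the first step only when $|w_2|=\phi(|w_1|)$ and in the second only when $|w_1|=r_1$; together these force $(|w_1|,|w_2|)=(r_1,r_2)$. Thus whenever $(|w_1|,|w_2|)\ne(r_1,r_2)$ we obtain the strict inequality \eqref{E:HB-s}. I do not anticipate a serious obstacle: the only points requiring care are the verification that the critical point $r_1$ is interior (so that $g$ genuinely attains its maximum there rather than at an endpoint) and the bookkeeping of the two equality cases, both of which are controlled by the conditions \eqref{E:phi-cond}.
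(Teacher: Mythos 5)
Your proof is correct and follows essentially the same route as the paper: both arguments identify the level set $\frac{s}{r_1}x_1+\frac{1-s}{r_2}x_2=1$ as the tangent line to $\gamma_+$ at $(r_1,r_2)$ and conclude from strict convexity that $(|w_1|,|w_2|)$ lies strictly below it. The only difference is that the paper invokes the ``strictly convex region lies strictly below its tangent line'' fact in one line and then does the algebra with \eqref{E:subst2}, whereas you verify that fact explicitly via the auxiliary function $g$ and the sign of $g''$ coming from \eqref{E:phi-concave}.
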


\begin{proof}
The strict convexity of $\bar D\cap \R_+^2$ implies that $(|w_1|,|w_2|)$
lies below the tangent line $x_2=r_2+\phi'(r_1)(x_1-r_1)$ to $\gamma_+$ at $(r_1,r_2)$, that is,
\begin{equation}\label{E:HB}
|w_2|-r_2< \phi'(r_1)(|w_1|-r_1).
\end{equation}
Combining this with $\frac{s}{r_1} =  
\frac{-\phi'(r_1)}{r_2 -r_1\phi'(r_1)}$ and  $\frac{1-s}{r_2}  = \
\frac{1}{r_2 -r_1\phi'(r_1)}$, see \eqref{E:subst2}, we obtain \eqref{E:HB-s}.
\end{proof}

\begin{Lem}\label{L:finite-coeff}
For  $D\in \tilde\RR$  
we have
\begin{equation}\label{E:finite-coeff}
\frac{s}{r_1}\leq \frac{1}{b_1}
\text{ and }\frac{1-s}{r_2}\leq \frac{1}{b_2}\
\end{equation}
on $\gamma_+$.
\end{Lem}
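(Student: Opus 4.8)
The plan is to deduce both inequalities directly from Lemma \ref{L:HB}, applied at the two endpoints of the curve $\gamma$. Recall from Proposition \ref{P:D-phi} together with \eqref{E:s-a1} and \eqref{E:s-a2} that $\gamma$ is the arc in $\R_{\ge0}^2$ joining the endpoint $(0,b_2)$ (where $s\to 0$) to the endpoint $(b_1,0)$ (where $s\to 1$); since $\gamma\subset\bndry D$, both endpoints lie in $\bar D$. This is the only input beyond Lemma \ref{L:HB} that the argument requires.

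First I would fix $(r_1,r_2)\in\gamma_+$. Because $(r_1,r_2)$ lies on the \emph{open} arc, we have $r_1,r_2>0$, so in particular $(b_1,0)\ne(r_1,r_2)$ and $(0,b_2)\ne(r_1,r_2)$, which supplies the nondegeneracy hypothesis of Lemma \ref{L:HB}. Any point of $\C^2$ with $(|w_1|,|w_2|)=(b_1,0)$ lies in $\bar D$, so Lemma \ref{L:HB} applies and yields
$$
\frac{s}{r_1}\,b_1+\frac{1-s}{r_2}\cdot 0<1,
$$
which rearranges to $\frac{s}{r_1}<\frac{1}{b_1}$, and in particular $\frac{s}{r_1}\le\frac{1}{b_1}$. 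Symmetrically, taking a point with $(|w_1|,|w_2|)=(0,b_2)$ in Lemma \ref{L:HB} gives $\frac{1-s}{r_2}\,b_2<1$, hence $\frac{1-s}{r_2}\le\frac{1}{b_2}$. This proves \eqref{E:finite-coeff}.

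There is essentially no obstacle here: the two things to verify are that the endpoints belong to $\bar D$ (immediate, since they lie on $\gamma=\bndry D\cap\R_{\ge0}^2$) and that $(|w_1|,|w_2|)\ne(r_1,r_2)$ (automatic, since $r_1,r_2>0$ on $\gamma_+$ while each endpoint has a vanishing coordinate). Geometrically the content is that the $x_1$- and $x_2$-intercepts $r_1/s$ and $r_2/(1-s)$ of the supporting tangent line to $\gamma_+$ at $(r_1,r_2)$ lie no closer to the origin than $b_1$ and $b_2$, i.e. no closer than the endpoints of $\gamma$ themselves. Should one wish to avoid invoking Lemma \ref{L:HB}, the same conclusion follows by combining the identities $\frac{s}{r_1}=\frac{-\phi'(r_1)}{r_2-r_1\phi'(r_1)}$ and $\frac{1-s}{r_2}=\frac{1}{r_2-r_1\phi'(r_1)}$ recorded in the proof of Lemma \ref{L:HB} with the concavity of $\phi$ from Proposition \ref{P:D-phi}.
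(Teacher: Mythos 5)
Your proof is correct and is essentially the paper's own argument: the paper likewise obtains \eqref{E:finite-coeff} by applying \eqref{E:HB-s} with $w=(b_1,0)$ and $w=(0,b_2)$. Your verification of the hypotheses of Lemma \ref{L:HB} (the endpoints lie in $\bar D$ and differ from any point of $\gamma_+$) is a worthwhile detail the paper leaves implicit.
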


\begin{proof} This follows from \eqref{E:HB-s} by setting $w=(b_1,0)$ and $(0,b_2)$, respectively. 
\end{proof}

\begin{Lem}\label{L:squ}
For  $D\in \tilde\RR$ the functions $\dfrac{s}{r_1}$ and $\dfrac{1-s}{r_2}$ extend to continuous functions on $\gamma$, and the function
$\left(\dfrac{s}{r_1}\right)^2+\left(\dfrac{1-s}{r_2}\right)^2$ extends to a continuous positive function on $\gamma$.
\end{Lem}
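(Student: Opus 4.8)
The continuity of both quotients on the open arc $\gamma_+$ is immediate: there $r_1,r_2>0$ and $s$ is $C^1$ by \eqref{E:ds/dt}, so each is a ratio of continuous functions with nonvanishing denominator. The only issue is continuity at the two endpoints $(0,b_2)$ (where $s\to0$ and $r_1\to0$) and $(b_1,0)$ (where $s\to1$ and $r_2\to0$), at each of which one of the quotients presents an indeterminate form. The plan is to compute the two one-sided limits directly from the explicit expressions
\[
\frac{s}{r_1}=\frac{-\phi'(r_1)}{\phi(r_1)-r_1\phi'(r_1)},\qquad
\frac{1-s}{r_2}=\frac{1}{\phi(r_1)-r_1\phi'(r_1)},
\]
which follow from \eqref{E:subst2} together with $r_2=\phi(r_1)$ (these are exactly the identities recorded in the proof of Lemma \ref{L:HB}). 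Note that the common denominator $\phi(r_1)-r_1\phi'(r_1)$ is positive on $(0,b_1)$ by \eqref{E:phi-pos} and \eqref{E:phi-dec}.

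At the endpoint $(0,b_2)$ I would let $r_1\to0$. Here $\phi(r_1)\to\phi(0)=b_2>0$ by \eqref{E:phi-pos}, while $\phi'(r_1)\to0$ by \eqref{E:horiz-phi}; hence $\phi(r_1)-r_1\phi'(r_1)\to b_2$, giving $\tfrac{s}{r_1}\to0$ and $\tfrac{1-s}{r_2}\to\tfrac1{b_2}$. Thus both quotients extend continuously to this endpoint, with respective values $0$ and $1/b_2$.

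At the endpoint $(b_1,0)$ I would let $r_1\to b_1$. Here $\phi(r_1)\to\phi(b_1)=0$ by \eqref{E:r-van} and $-\phi'(r_1)\to+\infty$ by \eqref{E:vert-phi}, so the denominator tends to $+\infty$ and $\tfrac{1-s}{r_2}\to0$ directly. For $\tfrac{s}{r_1}$, which is now of the form $\infty/\infty$, I would divide numerator and denominator by $-\phi'(r_1)$ to rewrite it as
\[
\frac{s}{r_1}=\frac{1}{\,r_1+\dfrac{\phi(r_1)}{-\phi'(r_1)}\,};
\]
since $\phi(r_1)\to0$ and $-\phi'(r_1)\to+\infty$, the second term in the denominator tends to $0$, so $\tfrac{s}{r_1}\to1/b_1$. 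Thus both quotients extend continuously here as well, with values $1/b_1$ and $0$.

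Combining these computations, $\tfrac{s}{r_1}$ and $\tfrac{1-s}{r_2}$ each extend to continuous functions on the closed curve $\gamma$. The sum of their squares is then continuous, and it is strictly positive everywhere: in the interior both summands are positive, while at $(0,b_2)$ the sum equals $1/b_2^2>0$ and at $(b_1,0)$ it equals $1/b_1^2>0$. The key point — and essentially the only place where any care is needed — is that the two quotients vanish at \emph{different} endpoints, so their squares never vanish simultaneously; the one mild obstacle is the $\infty/\infty$ indeterminacy for $s/r_1$ at $(b_1,0)$, which the division above resolves. (Alternatively, the endpoint limits can be read off from the exponential forms $\tfrac{s}{r_1}=b_1^{-1}\exp(-\int_s^1\frac{dt}{t\,\breve p^*(t)})$ and $\tfrac{1-s}{r_2}=b_2^{-1}\exp(-\int_0^s\frac{dt}{(1-t)\,\breve p^*(t)})$ coming from \eqref{E:int1}, together with the divergence conditions \eqref{E:horiz-pb} and \eqref{E:vert-pb}.)
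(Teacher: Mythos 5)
Your proof is correct and takes essentially the same approach as the paper: the paper's proof consists of stating exactly the four endpoint limits you compute ($\tfrac{s}{r_1}\to 0$, $\tfrac{1-s}{r_2}\to\tfrac{1}{b_2}$ at $(0,b_2)$; $\tfrac{s}{r_1}\to\tfrac{1}{b_1}$, $\tfrac{1-s}{r_2}\to 0$ at $(b_1,0)$), citing \eqref{E:hidden-u-var} for the two indeterminate ones. You have simply written out, via the identities from \eqref{E:subst2} recorded in the proof of Lemma \ref{L:HB}, the elementary verifications that the paper leaves to the reader.
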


\begin{proof}
This is a consequence of the limits $\lim\limits_{\zeta\to(0,b_2)} \dfrac{s}{r_1}=0$, $\lim\limits_{\zeta\to(b_1,0)} \dfrac{s}{r_1}=\dfrac{1}{b_1}$, $\lim\limits_{\zeta\to(0,b_2)} \dfrac{1-s}{r_2}=\dfrac{1}{b_2}$, $\lim\limits_{\zeta\to(b_1,0)} \dfrac{1-s}{r_2}=0$.
(See \eqref{E:hidden-u-var} to check the first and fourth limits.)
\end{proof}

In the case of a $\PP$-domain the $s$-parametrization
of $\gamma$ given in 
 \eqref{E:s-param} takes the following especially simple form: 
  \begin{equation}\label{E:gamma-PP}
 \gamma =\{(b_1s^{1/p},\, b_2(1-s)^{1/p})\st 0\leq s\leq 1\}.
 \end{equation}

\begin{Prop}\label{P:pcon} Suppose $D\in\tilde\RR$. If the function $p$  is constant then $D\in\PP$.
\end{Prop}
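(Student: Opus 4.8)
The plan is to exploit the explicit $s$-parametrization of $\gamma_+$ furnished by Theorem~\ref{T:rr-tilde-cond}. Since $s$ is a homeomorphism of $\gamma$ onto $[0,1]$, the hypothesis that $p$ is constant on $\gamma_+$ is equivalent to saying that the function $\breve p=p\circ s\inv$ of \eqref{E:pb-def} is a constant $p_0\in(1,\infty)$. The idea is then simply to substitute this constant into the integral formulas \eqref{E:r1-s} and \eqref{E:r2-s} for $r_1$ and $r_2$: the integrands collapse to $dt/(p_0\,t)$ and $dt/\bigl(p_0(1-t)\bigr)$, so the integrals evaluate in elementary closed form.

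First I would carry out the two integrations, $\int_s^1 dt/(p_0 t)=-(\log s)/p_0$ and $\int_0^s dt/\bigl(p_0(1-t)\bigr)=-\bigl(\log(1-s)\bigr)/p_0$, which upon exponentiation yield $r_1=b_1\,s^{1/p_0}$ and $r_2=b_2\,(1-s)^{1/p_0}$. This is exactly the parametrization \eqref{E:gamma-PP} of the curve $\gamma$ attached to a $\PP$-domain, so at this point the identification is already visible.

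Next I would eliminate the parameter $s$ to recover the defining relation. From $r_1^{p_0}=b_1^{p_0}\,s$ and $r_2^{p_0}=b_2^{p_0}\,(1-s)$ one gets $b_1^{-p_0}\,r_1^{p_0}+b_2^{-p_0}\,r_2^{p_0}=s+(1-s)=1$ along $\gamma$. Setting $a_1=b_1^{-p_0}>0$ and $a_2=b_2^{-p_0}>0$, this says that $\gamma$ coincides with the arc $\bndry D_{p_0,a_1,a_2}\cap\R_+^2$ cut out by \eqref{E:Pball}. Since a domain in $\tilde\RR$ is completely determined by its curve $\gamma$ through the description \eqref{E:DgenRein} of Proposition~\ref{P:D-phi} (equivalently, by the one-to-one correspondence of Theorem~\ref{T:rr-tilde-cond}), and since $D_{p_0,a_1,a_2}\in\PP\subset\tilde\RR$ shares this same $\gamma$, I would conclude $D=D_{p_0,a_1,a_2}\in\PP$, as claimed.

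This argument is essentially a computation and I do not anticipate a genuine obstacle; the only points requiring a little care are to confirm that the constant value $p_0$ lies in $(1,\infty)$ (guaranteed since $p$ takes values in $(1,\infty)$) and that the resulting coefficients $a_1,a_2$ are strictly positive, so that $D_{p_0,a_1,a_2}$ indeed belongs to the family $\PP$ defined in \eqref{E:PPdom}.
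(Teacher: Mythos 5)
Your proposal is correct and follows essentially the same route as the paper: the paper's proof is precisely the observation that when $p$ is constant the parametrization \eqref{E:s-param} reduces to \eqref{E:gamma-PP}, which is the computation you carry out explicitly. Your additional steps (eliminating $s$ and checking $a_1,a_2>0$) just make explicit what the paper leaves to the reader.
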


\begin{proof}
If $p$ is constant then \eqref{E:s-param} matches \eqref{E:gamma-PP}.
\end{proof}

For general $D\in \tilde\RR$ there will be no control on the behavior of $p$ along $\gamma_+$ as we approach one of the endpoints, so we will also consider the following smaller class of domains.

\begin{Def}\label{D:rr}
Let $\RR$ denote the class of domains
\begin{equation*}
\{(z_{1},z_{2})\st |z_{1}|<b_1, \,|z_{2}|<\phi(|z_{1}|)\}
\end{equation*}
with $b_1$ a given positive constant  and $\phi$ a continuous decreasing concave function on $[0,b_1]$ which  is $C^2$-smooth on $(0,b_1)$ and satisfies
\begin{subequations}\label{E:phi-rr-cond}
\begin{align}
\phi''(r_1)&<0\quad \text{for } 0<r_1<b_1;\label{E:rrcx}\\
\phi(r_1)&=b_2-c_2r_1^{p_{1}}+\eps_1(r_1) \text{ for  $r_1$ near $0$};\label{E:rrz1}\\
\phi(r_1)&=\sqrt[\uproot{4}p_{2}]{\frac{b_1-r_1+\eps_2(\phi(r_1))}{c_1}}\quad \text{ for  $r_1$ near $b_1$}\label{E:rrz2}
\end{align}
\end{subequations}
where $b_j>0,$  $c_j>0$ and $p_j>1$ are  constants and $\eps_j(r_j)$ are  functions satisfying 
\begin{subequations}\label{E:eps-cond}
\begin{align}
&\text{$\eps_j$ is of class $C^1$ for $r_1\ge 0$};\label{E:eps-a}\\
&\text{$\eps_j$ is of class $C^2$ for $r_1>0$};\\
&\eps_j(0)=0;\\
 &\eps_j'(0)=0;\label{E:eps-d}\\
 & \eps''_j(r_j)=o(r_j^{p_j-2})\label{E:eps-e}
\end{align}
\end{subequations}
for $j=1, 2$.
\end{Def}

The conditions \eqref{E:phi-rr-cond}  imply the conditions \eqref{E:phi-cond} and so $\RR$ is contained in $\tilde\RR$.

Condition \eqref{E:rrz2} is equivalent to the condition that $\psi=\phi\inv$ satisfies
\begin{equation}\label{E:rrz1inv}
\psi(r_2)=b_1-c_1r_2^{p_{2}}+\eps_2(r_2) \text{ for  $r_2$ near $0$}.
\end{equation}

The class $\RR$ is invariant under permutation of the coordinates $z_{1},z_{2}$; thus we will often transfer work on behavior near the axis $z_{1}=0$ to get  corresponding results near $z_{2}=0$.

Note that the assumptions \eqref{E:eps-cond} imply that 
\begin{subequations}\label{E:eps-est}
\begin{align}
\eps'_1(r_1)&=o(r_1^{p_1-1})\label{E:eps-est-a}\\
\eps_1(r_1)&=o(r_1^{p_1}).
\end{align}
\end{subequations}

As mentioned in the introduction,  the class $\RR$ contains  the $\PP$-domains \eqref{E:PPdom}.  For a $\PP$-domain, the constants in \eqref{E:rrcx}, \eqref{E:rrz1} and \eqref{E:rrz2} are  determined in terms of $p$ and $a_1$, $a_2$ by
\begin{equation*}
p_1=p_2=p, \;
  b_2=\frac{1}{\sqrt[p]{a_2}},\;
  b_1=\frac{1}{\sqrt[p]{a_1}}, \;
c_2=\frac{a_1}{p\sqrt[p]{a_2}},\;
 \quad c_1=\frac{a_2}{p\sqrt[p]{a_1}},
 \end{equation*}
the function $\eps_1(|\zeta_1|)$ is the error term of the first-order expansion 
of
\begin{equation}\label{E:phi-pp}
\phi(|\zeta_1|) =\sqrt[p]{\frac{1 - a_1|\zeta_1|^p}{a_2}}\, =\,
b_2\sqrt[p]{1-b_1^{-p}|\zeta_1|^p\,}
\end{equation}
in powers of $|\zeta_1|^p$ about $\zeta_1=0$, while $\eps_2(|\zeta_2|)$ is similarly determined by
\begin{equation*}\label{E:psi-pp}
\psi(|\zeta_2|) =\sqrt[p]{\frac{1 - a_2|\zeta_2|^p}{a_1}}\,=\,
\, b_1\sqrt[p]{1-b_2^{-p}|\zeta_2|^p\,} .
\end{equation*}

\begin{Prop}\label{P:RRdom}
Suppose $D\in\RR$. Then the functions $p(\zeta)$, $a_1(\zeta)$ and $a_2(\zeta)$ described in Proposition \ref {P:osc} extend to continuous functions on all of $\bndry D$ with $p(\zeta)=p_1$ when $\zeta_1=0$ and $p(\zeta)=p_2$ when $\zeta_2=0$.
\end{Prop}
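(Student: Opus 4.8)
The plan is to reduce everything to the behavior of the explicit formulas \eqref{E:a1def}, \eqref{E:a2def} and \eqref{E:p-one} from Proposition \ref{P:osc} as $\zeta$ approaches an axis, working on $\gamma_+$ in the coordinate $r_1$. Since $\RR$ is invariant under interchange of $z_1$ and $z_2$, it suffices to treat the limit $\zeta\to(0,b_2)$ (equivalently $r_1\to0^+$, $s\to0^+$); the axis $\{z_2=0\}$ is handled by the same argument applied to $\psi=\phi\inv$ via \eqref{E:rrz1inv}, and rotation-invariance then extends the conclusions from $\gamma$ to all of $\bndry D$. My first step would be to record the two clean identities
\[
a_1(\zeta)\,r_1^{p(\zeta)}=s,\qquad a_2(\zeta)\,r_2^{p(\zeta)}=1-s,
\]
which follow by substituting \eqref{E:p-one} into \eqref{E:a2def} to get $a_2\phi^{p}=\phi/(\phi-r_1\phi')=1-s$ (recall $r_2=\phi(r_1)$ on $\gamma_+$), and then using the normalization $a_1r_1^{p}=1-a_2\phi^{p}$ together with \eqref{E:subst2}. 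These identities isolate the difficulty: $a_2$ involves $r_2^{p}$ with $r_2\to b_2>0$, whereas $a_1$ involves $r_1^{p}$ with $r_1\to0$.

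Next I would insert the expansion \eqref{E:rrz1}, together with its first two derivatives and the estimates \eqref{E:eps-est}--\eqref{E:eps-cond}, into \eqref{E:p-one}. The numerator $r_1\phi\phi''$ and denominator $\phi'(\phi-r_1\phi')$ each have leading term a constant multiple of $r_1^{p_1-1}$, and dividing shows $p(\zeta)\to1+(p_1-1)=p_1$; the same substitution in \eqref{E:subst2} gives the asymptotics $s\sim (c_2p_1/b_2)\,r_1^{p_1}$. From the second identity above, $a_2(\zeta)=(1-s)/r_2^{p(\zeta)}\to 1/b_2^{p_1}$, the base $r_2$ being bounded away from $0$ so that $r_2^{p(\zeta)}\to b_2^{p_1}$; this limit is positive, and the symmetric argument supplies the continuous extension of $a_1$ across $\{z_2=0\}$ and of $p$ there with value $p_2$.

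The main obstacle is the continuity of $a_1$ at $\{z_1=0\}$. Combining $a_1=s/r_1^{p(\zeta)}$ with $s\sim(c_2p_1/b_2)\,r_1^{p_1}$ gives $a_1(\zeta)\sim (c_2p_1/b_2)\,r_1^{\,p_1-p(\zeta)}$, so the issue is to show the floating exponent contributes nothing in the limit, i.e.\ $r_1^{\,p(\zeta)-p_1}\to1$, equivalently $(p(\zeta)-p_1)\log r_1\to0$. Mere convergence $p(\zeta)\to p_1$ does \emph{not} suffice here, since $\log r_1\to-\infty$; what is needed is a quantitative rate. I would obtain it by returning to \eqref{E:p-one} and extracting the next-order term: writing $g(r_1)=\eps_1'(r_1)/r_1^{p_1-1}$, the computation yields $p(\zeta)-p_1=-\,r_1g'(r_1)/(c_2p_1)+O(r_1^{p_1})$, and \eqref{E:eps-e} is exactly what forces $r_1g'(r_1)=o(1)$. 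Converting this into the required estimate $(p(\zeta)-p_1)\log r_1\to0$ is the delicate point and the step I expect to cost the most effort; it is here that the precise strength of the smoothness hypothesis \eqref{E:eps-e} must be exploited. Once it is in place one has $a_1(\zeta)\to c_2p_1/b_2>0$, positivity of all three limiting values holds, the boundary value $p=p_1$ on $\{z_1=0\}$ is confirmed, and the coordinate-interchange symmetry completes the argument on all of $\bndry D$.
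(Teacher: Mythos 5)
Your reduction is accurate as far as it goes, and it is in fact more careful than the paper's own proof: the identities $a_1r_1^{p}=s$ and $a_2r_2^{p}=1-s$ do follow from \eqref{E:p-one}, \eqref{E:a1def}, \eqref{E:a2def} and \eqref{E:subst2}; your asymptotics $p(\zeta)=p_1+o(1)$ and $s=\tfrac{p_1c_2}{b_2}r_1^{p_1}+o(r_1^{p_1})$ coincide with \eqref{s-asymp} and with the paper's computation $p=\tfrac{r_1}{s}\tfrac{ds}{dr_1}$ via \eqref{E:ds/dt}; and your handling of $a_2$ at $(0,b_2)$ is complete. You have also correctly isolated the one genuine difficulty, namely that $a_1=(s/r_1^{p_1})\,r_1^{p_1-p(\zeta)}$ converges only if $(p(\zeta)-p_1)\log r_1\to0$, which is strictly stronger than $p(\zeta)\to p_1$. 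The paper never confronts this point: after computing $p\to p_1$ exactly as you do, it dispatches $a_1$ and $a_2$ with the phrase ``applying a similar analysis,'' which silently assumes $r_1^{p_1-p}\to1$. Your next-order formula $p-p_1=-r_1g'(r_1)/(p_1c_2)+O(r_1^{p_1})$, with $g=\eps_1'/r_1^{p_1-1}$, is also correct.

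The gap is the step you defer, and it cannot be closed the way you propose. The ``precise strength'' of \eqref{E:eps-e}, combined with \eqref{E:eps-est-a}, is exactly the statement $r_1g'(r_1)=o(1)$ (and $g(r_1)=o(1)$); nothing more can be extracted from it, and an $o(1)$ quantity multiplied by $\log r_1$ need not tend to zero. Concretely, take $\eps_1'(r_1)=r_1^{p_1-1}\sin\bigl(\log(1/r_1)\bigr)/\log(1/r_1)$ near $0$ and $\eps_1(r_1)=\int_0^{r_1}\eps_1'(t)\,dt$: every condition in \eqref{E:eps-cond} is satisfied (in particular $\eps_1''(r_1)=o(r_1^{p_1-2})$), yet $r_1g'(r_1)\log(1/r_1)=-\cos\bigl(\log(1/r_1)\bigr)+o(1)$, so $(p-p_1)\log(1/r_1)=\tfrac{1}{p_1c_2}\cos\bigl(\log(1/r_1)\bigr)+o(1)$ oscillates and $a_1=(s/r_1^{p_1})\exp\bigl((p-p_1)\log(1/r_1)\bigr)$ has no limit along $\gamma_+$ as $r_1\to0$. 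Thus the continuity of $a_1$ at $\{\zeta_1=0\}$ is not provable from Definition \ref{D:rr} as stated; one needs a quantitative strengthening of \eqref{E:eps-e}, e.g.\ $\eps_1''(r_1)=o\bigl(r_1^{p_1-2}/\log(1/r_1)\bigr)$ or a Dini-type condition on $(p_1-1)\eps_1'-r_1\eps_1''$, to force $(p-p_1)\log r_1\to0$. The same objection applies to the paper's ``similar analysis''; your write-up has the merit of exposing the problem, but as a proof it stops exactly at the decisive step, and the route you sketch for completing it would fail. The portions concerning $p$ itself, and the ``easy'' coefficient at each axis, are sound in both your argument and the paper's.
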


\begin{proof} Using \eqref{E:pb-gamma} as before it will suffice to show that the functions $p, a_1$ and $a_2$ extend continuously from $\gamma_+$ to $\gamma$ (with $p$ taking the indicated boundary values).

 Combining \eqref{E:eps-est-a}
with \eqref{E:rrz1} and \eqref{E:subst2} we find that
\begin{align}\label{s-asymp}
s&=\frac{p_1c_2r_1^{p_1}-r_1\eps'_1(r)}
{b_2+(p_1-1)c_2r_1^{p_1} + \eps_1(r_1)-r_1\eps'(r_1)}\\
&=\frac{p_1c_2}{b_2} r_1^{p_1} + o(r_1^{p_1})\notag
\end{align}
and
\begin{equation}\label{ds-asymp}
\frac{ds}{dr_1}=\frac{p_1^2c_2}{b_2} r_1^{p_1-1}+ o(r_1^{p_1-1}).
\end{equation}

We note for future reference that \eqref{s-asymp} may be rewritten in the form
\begin{equation}\label{E:r1-asympt}
r_1=s^{1/p_1}\left( \left(\frac{b_2}{p_1 c_2}\right)^{1/p_1}+o(1)\right).
\end{equation}

From \eqref{E:ds/dt} we now obtain

\begin{align*}
p(\zeta)&=\frac{r_1}{s}\frac{ds}{dr_1}=p_1+o(1)
\end{align*}
as $\zeta\to (0,b_2)$.

Applying a similar analysis to \eqref{E:a1def} and \eqref{E:a2def} we find that 
$
a_{1}(\zeta)\to \frac{p_{1}c_2}{b_2}
$
and
$
a_{2}(\zeta)\to \frac{1}{b_2^{p}}
$
as $\zeta\to (0,b_2)$.

 Transferring these results to the other axis we have
 \begin{align}\label{(1-s)-asymp}
1-s&=\frac{p_2c_1}{b_1} r_2^{p_2} + o(r_2^{p_2});
\end{align}
also
$
p(\zeta)\to p_{2},\,
a_{2}(\zeta)\to \frac{p_{2}c_1}{b_1}
$
and
$
a_{1}(\zeta)\to \frac{1}{b_1^{p}}
$
as $\zeta\to (b_1,0)$.
\medskip
\end{proof}

\begin{Cor}\label{P:smdom}
Suppose $D$ is a $C^2$-smooth strongly convex Reinhardt domain in $\C^2$. Then the function $p$ defined by \eqref{E:p-one} extends to a continuous rotation-invariant function on $\bndry D$ satisfying $p(\zeta)=2$ when $\zeta_1\zeta_2=0$.\end{Cor}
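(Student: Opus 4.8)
The plan is to show that the hypotheses force $D$ to lie in the subclass $\RR$ with exponents $p_1=p_2=2$, after which the conclusion is immediate from Proposition \ref{P:RRdom}. A $C^2$-smooth strongly convex (bounded) Reinhardt domain is convex, $C^1$-smooth and complete (a bounded convex Reinhardt domain is automatically complete, since the convex hull of a circle $\{|\zeta_1|=c\}$ together with the origin is the full disk), and it is $C^2$-smooth and strongly convex away from the axes; hence $D\in\tilde\RR$ and Proposition \ref{P:D-phi} supplies the generating function $\phi$. By \itemref{N:transv}{I:axis2} the tangent to $\gamma$ at $(0,b_2)$ is horizontal, so $\phi'(0)=0$; concavity gives $\phi''\le 0$ on $(0,b_1)$, and strong convexity upgrades this to the strict inequality \eqref{E:rrcx}. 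It remains to establish the endpoint expansions \eqref{E:rrz1}--\eqref{E:rrz2} with $p_1=p_2=2$.

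The key step is a regularity statement near the axis $\zeta_1=0$. Intersecting $\bndry D$ with the real hyperplane $\{\Im z_2=0\}$ near $(0,b_2)$ (where $\Re z_2>0$) is transverse, since by \itemref{N:transv}{I:axis2} the tangent space $T_{(0,b_2)}\bndry D=\C\times i\R$ together with $\C\times\R=\{\Im z_2=0\}$ spans $\C^2$; the intersection is therefore the $C^2$-smooth graph $\{(z_1,\phi(|z_1|))\}$ over a neighborhood of $0$ in the $z_1$-plane. Thus $(x_1,y_1)\mapsto\phi\bigl(\sqrt{x_1^2+y_1^2}\bigr)$ is a \emph{radial} $C^2$ function, and for such a function the second derivative of the radial profile extends continuously to the origin: the limit $\ell:=\lim_{r_1\to0^+}\phi''(r_1)$ exists, is finite, and equals $\partial_{x_1}^2\bigl[\phi(|z_1|)\bigr]$ at $z_1=0$. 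The real two-plane $\{\Im z_1=\Im z_2=0\}$ contains both the tangent $\R\times\{0\}$ to $\gamma$ and the normal direction to $\bndry D$ at $(0,b_2)$, so $\gamma$ is a normal section there and its curvature, which equals $-\ell$ because $\phi'(0)=0$, is a normal curvature of $\bndry D$; strong convexity forces all normal curvatures to be positive, whence $\ell<0$.

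Setting $c_2:=-\ell/2>0$ and $\eps_1(r_1):=\phi(r_1)-b_2+c_2r_1^2$, Taylor's theorem together with $\phi(0)=b_2$ and $\phi'(0)=0$ gives $\eps_1(0)=\eps_1'(0)=0$ and $\eps_1''(r_1)=\phi''(r_1)-\ell=o(1)=o(r_1^{\,p_1-2})$ with $p_1=2$; this is precisely \eqref{E:rrz1} together with \eqref{E:eps-cond} at the axis $\zeta_1=0$. Since $\RR$ is invariant under interchange of the coordinates $z_1,z_2$, the identical argument applied to $\psi=\phi\inv$ near $(b_1,0)$ produces the expansion \eqref{E:rrz1inv} (equivalently \eqref{E:rrz2}) with $p_2=2$. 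Hence $D\in\RR$ with $p_1=p_2=2$, and Proposition \ref{P:RRdom} shows that the function $p$ of \eqref{E:p-one} extends to a continuous rotation-invariant function on $\bndry D$ taking the value $p_1=2$ on $\{\zeta_1=0\}$ and $p_2=2$ on $\{\zeta_2=0\}$, as claimed.

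The one genuinely delicate point is the regularity input of the second paragraph: converting $C^2$-smoothness of $\bndry D$ as a hypersurface in $\C^2$ into a \emph{finite one-sided limit} of $\phi''$ at the axis. Because $|z_1|$ is not smooth at $z_1=0$, one cannot simply assert that $\phi$ is twice differentiable as a function of $r_1$ up to $r_1=0$; the correct extraction from the radial structure yields continuity of $\phi''$ (and not of higher derivatives). Pinning down the sign of the limit --- identifying $-\ell$ with a genuine normal curvature and using \emph{strong} rather than merely weak convexity --- is exactly what forces the exponent to be $2$ and rules out the degenerate, higher-order flat behavior that would occur for a merely weakly convex boundary.
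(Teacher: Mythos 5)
Your proof is correct and follows the same route as the paper: the paper's entire proof of this corollary is the one-line assertion that such a domain satisfies Definition \ref{D:rr} with $p_1=p_2=2$, after which Proposition \ref{P:RRdom} applies. What you have done is supply the verification the paper leaves implicit --- membership in $\tilde\RR$, the transverse-slice argument showing $\phi(|z_1|)$ is $C^2$ near the axis so that $\phi''$ has a finite limit there, and the use of strong convexity to make that limit negative, yielding the expansions \eqref{E:rrz1}--\eqref{E:rrz2} with exponent $2$ --- all of which is sound.
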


\begin{proof}
Such a domain satisfies Definition \ref{D:rr} with $p_1=p_2=2$.
\end{proof}

\begin{Thm}\label{T:p-rr-cond}
A domain generated by $\breve p$, $b_2$ and $b_1$ as in \eqref {E:s-param}
belongs to $\RR$ if and only if $\breve p$ satisfies the conditions
\begin{subequations}\label{E:p-rr-cond}
\begin{align}
&\text{$\breve p$ extends to a continuous function $[0,1]\to (1,\infty)$};
\label{E:p-ext}\\
&\int_0^1\left(\frac{1}{\breve p(s)}- \frac{1}{\breve p(0)}\right)\frac{ds}{s}
\text{ and }
\int_0^1\left(\frac{1}{\breve p(s)}- \frac{1}{\breve p(1)}\right)\frac{ds}{1-s}
\label{E:p-Dini} \\
&\qquad\qquad\qquad\qquad\qquad\qquad\text{ converge as improper integrals.}\notag
\end{align}
\end{subequations}
(The condition \eqref{E:p-Dini} means that $\lim\limits_{s\to 0+} \int_s^1\left(\frac{1}{\breve p(t)}- \frac{1}{\breve p(0)}\right)\frac{dt}{t}$ and \linebreak $\lim\limits_{s\to 1-}\int_0^s\left(\frac{1}{\breve p(t)}- \frac{1}{\breve p(1)}\right)\frac{dt}{1-t}$ exist and are finite.)
\end{Thm}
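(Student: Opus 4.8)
The plan is to reduce everything to a local analysis near each endpoint of $\gamma$ and to convert the improper-integral condition \eqref{E:p-Dini} into a clean statement about the rate at which $s$ vanishes. By the coordinate-swap symmetry of $\RR$ and $\tilde\RR$ it suffices to treat the endpoint $(0,b_2)$, i.e.\ the regime $s\to0^+$, $r_1\to0$; the analysis at $(b_1,0)$ is identical after interchanging the two coordinates, an interchange which sends $s\mapsto 1-s$ and thereby swaps the two integrals in \eqref{E:p-Dini}. Since a continuous $\breve p:[0,1]\to(1,\infty)$ is bounded away from $1$ and $\infty$, condition \eqref{E:p-ext} forces the four stretch conditions \eqref{E:p-tildeRR-cond}; so under \eqref{E:p-ext} the generated domain already lies in $\tilde\RR$, and in particular $\phi$ is continuous, decreasing and concave on $[0,b_1]$ and $C^2$ with $\phi''<0$ on $(0,b_1)$. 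This disposes of \eqref{E:rrcx} and of the smoothness requirements of Definition \ref{D:rr}, so the entire content reduces to the boundary expansions \eqref{E:rrz1}--\eqref{E:rrz2} with error terms obeying \eqref{E:eps-cond}.

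The key device is the identity, valid once $\breve p(0)=:p_1$ exists, obtained by splitting off the singular part of \eqref{E:r1-s}:
\[
\int_s^1\left(\frac{1}{\breve p(t)}-\frac{1}{p_1}\right)\frac{dt}{t}=\frac{1}{p_1}\log\frac{s}{r_1^{\,p_1}}+\log b_1 .
\]
Hence the first improper integral in \eqref{E:p-Dini} converges if and only if $s/r_1^{\,p_1}$ tends to a finite positive limit $\ell$ as $s\to0^+$. This is the crucial simplification: the conditionally convergent integral in \eqref{E:p-Dini} is recognized as an exact logarithm, so its convergence is equivalent to a purely first-order statement relating $s$ and $r_1$.

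For the direction $D\in\RR\Rightarrow$ \eqref{E:p-ext},\eqref{E:p-Dini} I would quote Proposition \ref{P:RRdom}: it gives $p(\zeta)\to p_1\in(1,\infty)$ as $\zeta\to(0,b_2)$, which is continuity of $\breve p$ at $0$, and its asymptotic \eqref{s-asymp} gives $s/r_1^{\,p_1}\to p_1c_2/b_2\in(0,\infty)$; by the identity above the first integral then converges, and the second follows symmetrically. For the converse I would set $p_1=\breve p(0)$, use \eqref{E:p-Dini} and the identity to get $s=\ell\,r_1^{\,p_1}(1+o(1))$ with $\ell\in(0,\infty)$, and put $c_2=\ell b_2/p_1$. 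Feeding $s\to0$, $\breve p(s)\to p_1$, $r_2\to b_2$ into the closed forms $\phi'=-\frac{s}{1-s}\frac{r_2}{r_1}$ (see \eqref{E:phi-diff}) and $\phi''=-\frac{(\breve p(s)-1)s}{(1-s)^2}\frac{r_2}{r_1^{2}}$ (from the proof of Theorem \ref{T:rr-tilde-cond}) yields $\phi'=-p_1c_2r_1^{\,p_1-1}(1+o(1))$ and $\phi''=-p_1(p_1-1)c_2r_1^{\,p_1-2}(1+o(1))$; integrating the first back from $r_1=0$ gives $\phi=b_2-c_2r_1^{\,p_1}+o(r_1^{\,p_1})$. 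Writing $\eps_1=\phi-b_2+c_2r_1^{\,p_1}$, these three estimates are precisely $\eps_1=o(r_1^{\,p_1})$, $\eps_1'=o(r_1^{\,p_1-1})$ and $\eps_1''=o(r_1^{\,p_1-2})$, and together with $\eps_1(0)=\eps_1'(0)=0$ and the inherited $C^1$/$C^2$ regularity they are exactly \eqref{E:eps-cond}; this establishes \eqref{E:rrz1}. The same computation at the other endpoint establishes \eqref{E:rrz1inv}, equivalently \eqref{E:rrz2}, so $D\in\RR$.

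The main obstacle is conceptual rather than computational: one must pass from a first-order integral condition to the second-order regularity demanded by \eqref{E:eps-e}. What makes this work is that the defining formula for $\phi''$ is an explicit product of factors each of which converges (to $p_1-1$, to $\ell r_1^{\,p_1}$, to $1$, and to $b_2$), so the relative error in $\phi''$ is automatically $o(1)$ and the subleading term $\eps_1''$ comes out $o(r_1^{\,p_1-2})$ for free, with no need to control the absolute integrability of any quantity. Care is required only to check that the limit $\ell$ is strictly positive and finite, so that $c_2>0$ and the leading terms genuinely cancel, and to verify that the ``$1+o(1)$'' factors may be integrated to produce the claimed error bound on $\eps_1$ itself.
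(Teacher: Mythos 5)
Your proposal is correct and follows essentially the same route as the paper's proof: the same logarithmic identity \eqref{E:dini} converting the Dini integral into $\frac{1}{p_1}\log\bigl(s b_1^{p_1}/r_1^{p_1}\bigr)$, the same use of Proposition \ref{P:RRdom} and \eqref{s-asymp} for the forward direction, an equivalent choice of $c_2$ (your $\ell b_2/p_1$ equals the paper's explicit exponential formula), and the same verification of \eqref{E:eps-e} from the closed forms of $\phi'$ and $\phi''$ in terms of $s$, $r_1$, $r_2$, $\breve p$. The only cosmetic difference is that you phrase the pivot as convergence of $s/r_1^{p_1}$ to a finite positive limit rather than working directly with the exponential expression for $c_2$.
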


Note that \eqref{E:p-ext}
implies \eqref{E:a1-stretch}-\eqref{E:vert-pb}.

\begin{proof} 
Suppose our domain is in $\RR$.  Then Proposition \ref{P:RRdom} shows that 
\eqref {E:p-ext} holds with $\breve p(0)=p_1, \breve p(1)=p_2$.  
Combining this with \eqref{E:r-p-breve} we obtain
\begin{align}\label{E:dini}
\int_s^1\left(\frac{1}{\breve p(t)}- \frac{1}{\breve p(0)}\right)\frac{dt}{t}\notag
&=-\log\frac{r_1}{b_1}+\frac{1}{p_1}\log s\\
&=\frac{1}{p_1}\log\frac{s b_1^{p_1}}{r_1^{p_1}}.\
\end{align}

Furthermore,  \eqref{s-asymp}  guarantees that the expression above converges to $\frac{1}{p_1}\log\frac{p_1 c_2 b_1^{p_1}}{b_2}$.
Then
a similar argument establishes the other half of \eqref{E:p-Dini}.

Suppose now that the conditions \eqref{E:p-rr-cond} hold.  Note that \eqref{E:p-ext}
implies the conditions in \eqref{E:p-tildeRR-cond}, so $D\in\tilde \RR$.  We need to specify constants $b_2, p_1, p_2, c_1$ and $ c_2$ so that 
all the conditions of Definition \ref{D:rr} hold.  We set $p_1=\breve p(0)$, $p_2=\breve p(1)$ and $b_2=\phi(0)$.

For any $c_2>0$ we find that $\eps_1$ defined from \eqref{E:rrz1}, that is, \begin{equation}\label{E:eps1-def}
\eps_1(r_1)=r_2-b_2+c_2r_1^{p_{1}},
\end{equation}
satisfies conditions \eqref{E:eps-a} through \eqref{E:eps-d} for $j=1$, and we are left to determine $c_2$ so that \eqref{E:eps-e} is satisfied. 
We set
\begin{equation*}
c_2=\frac{b_2}{p_1b_1^{p_1}} \exp\left( p_1\int_0^1\left(\frac{1}{\breve p(s)}-\frac{1}{p_1}\right)\,\frac{ds}{s}\right)
\end{equation*}
Since \eqref{E:dini} holds as before, 
we find that 
\begin{equation}\label{E:sr1p1}
s= \frac{p_1c_2}{b_2} r_1^{p_1} + o(r_1^{p_1}).
\end{equation}

Differentiating \eqref{E:eps1-def} twice with the use of 
 \eqref{E:hidden-u-var} and \eqref{E:ds/dt}  we obtain that
\begin{align*}
\eps'_1(r_1)&=-\frac{s}{1-s}\frac{r_2}{r_1}+c_2p_1r_1^{p_1-1},\\
\eps''_1(r_1)&=-(p-1)\frac{s r_2}{(1-s)^2r_1^2}+(p_1-1)c_2p_1 r_1^{p_1-2}.
\end{align*} 
Combining these with \eqref{E:sr1p1} and $r_2=b_2+o(1)$,  $p=p_1+o(1)$ we find that \eqref{E:eps-e} holds for $j=1$. 

A similar argument takes care of $j=2$.
\end{proof}

\section{Construction and basic properties of the Leray transform for domains in the class $\tilde \RR$}\label{S:Lt}

In this section we 
compute the Leray kernel for domains in the class $\tilde\RR$ and check that the associated Leray transform $\Lop$ reproduces holomorphic functions from their boundary values.  We
introduce the notion of admissible measure and
 provide formulae for various norms and spectra. (Unless explicitly stated, at this stage $\Lop$ is not assumed to be $L^2$-bounded.)

We base our computations on the function 
\begin{equation}\label{E:rhoRR}
\rho(\zeta_1, \zeta_2) \,=\, |\zeta_2| - \phi(|\zeta_1|)
\end{equation}
where $\phi$ is as in \eqref{E:phi-cond}.  This function will fail to be differentiable at points where $\zeta_1\zeta_2=0$; moreover, 
\begin{equation}\label{E:phi-grad}
|\nabla \rho(\zeta)|=\sqrt{1+\left(\phi'(|\zeta_1|)\right)^2},
\end{equation}
will not be bounded above where defined.  So $\rho$ is a defining function for $\bndry D\setminus\{\zeta_1\zeta_2=0\}$, but not for $\bndry D$.

For $w\in D$, \eqref{E:Lker} still defines a three-form on $\bndry D\setminus\{\zeta_1\zeta_2=0\}$ which is independent of the particular choice of defining function.  When integrating expressions involving this form over $\bndry D$ we simply ignore the points where $\zeta_1\zeta_2=0$.  (The set of such points has measure zero with respect to all boundary measures considered below.)

\bigskip

The classical proof (see \cite{Ran},  \S IV.3.2) of the reproducing property for holomorphic functions no longer applies, but we remedy this in 
Corollary \ref{C:poly-rep} and Proposition \ref{P:Hardy} below.

\begin{Lem}\label{L:Lker}
Let $D\in \tilde \RR$.  Then, representing $\zeta\in\bndry D\setminus\{\zeta_1\zeta_2=0\}$ by the coordinates $(s,\theta_1,\theta_2)$ as in 
\eqref{E:s-theta-param}, we have
\begin{equation}\label{E:LkerRR}
L(\zeta,w)= \frac{ds\w d\theta_1\w d\theta_2}
{4\pi^2
\left(1-e^{-i\theta_1}\frac{s}{r_1}w_1-e^{-i\theta_2}\frac{1-s}{r_2}w_2\right)^2}.
\end{equation}
\end{Lem}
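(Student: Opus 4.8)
The plan is to compute the Leray kernel $L(\zeta,w)$ directly from its definition \eqref{E:Lker} using the concrete defining function $\rho$ from \eqref{E:rhoRR}, and then to re-express everything in the $(s,\theta_1,\theta_2)$ coordinates via the parametrization \eqref{E:s-theta-param}. There are two factors to handle separately: the numerator three-form $j^*(\dee\rho\w\deebar\dee\rho)$ and the denominator $[\dee\rho(\zeta)\bullet(\zeta-w)]^2$.

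First I would compute $\dee\rho$ for $\rho=|\zeta_2|-\phi(|\zeta_1|)$. Writing $r_j=|\zeta_j|$ and using $\dee|\zeta_j| = \tfrac{1}{2}\bar\zeta_j\, r_j\inv\, d\zeta_j$, one finds $\dee\rho = -\tfrac12 \phi'(r_1) \bar\zeta_1 r_1\inv\, d\zeta_1 + \tfrac12 \bar\zeta_2 r_2\inv\, d\zeta_2$. The key simplification comes from rewriting the coefficients using \eqref{E:subst2}: the identities $\tfrac{s}{r_1}=\tfrac{-\phi'(r_1)}{r_2-r_1\phi'(r_1)}$ and $\tfrac{1-s}{r_2}=\tfrac{1}{r_2-r_1\phi'(r_1)}$ (exactly those used in Lemma \ref{L:HB}) let me replace the $\phi'$-expressions by the clean quantities $s/r_1$ and $(1-s)/r_2$, up to a common scalar factor. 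Because $L(\zeta,w)$ is independent of the choice of defining function, I am free to rescale $\rho$ by any positive function; multiplying by $2(r_2-r_1\phi'(r_1))$ replaces $\dee\rho$ by a form whose $d\zeta_j$ coefficients are (essentially) $\tfrac{s}{r_1}\bar\zeta_1 r_1\inv$ and $\tfrac{1-s}{r_2}\bar\zeta_2 r_2\inv$. With this normalization the denominator $\dee\rho(\zeta)\bullet(\zeta-w)$ becomes, after expanding $\tfrac{\bar\zeta_j(\zeta_j-w_j)}{r_j}= r_j - \bar\zeta_j w_j/r_j = r_j(1 - e^{-i\theta_j}w_j/r_j)$ and factoring, exactly $1 - e^{-i\theta_1}\tfrac{s}{r_1}w_1 - e^{-i\theta_2}\tfrac{1-s}{r_2}w_2$; this is the quantity being squared in \eqref{E:LkerRR} (note the $s/r_1 + (1-s)/r_2$ combination evaluated on the diagonal gives the normalizing $1$).

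Next I would compute the numerator $j^*(\dee\rho\w\deebar\dee\rho)$. Since $\deebar\dee\rho$ is a $(1,1)$-form and we wedge with $\dee\rho$, then pull back by the inclusion $j:\bndry D\to\C^2$, the computation is a finite bookkeeping exercise in the coordinates $(s,\theta_1,\theta_2)$. Here the substitution formulas \eqref{E:hidden-u}, \eqref{E:hidden-u-var} and \eqref{E:ds/dt} relating $dr_1, dr_2, ds$ are the essential tools: they let me convert the pulled-back form into a multiple of $ds\w d\theta_1\w d\theta_2$. The constant $1/(2\pi i)^2 = -1/(4\pi^2)$ from \eqref{E:Lker} combines with the factors picked up from $\dee|\zeta_j|$ and the parametrization to produce the $4\pi^2$ in the denominator of \eqref{E:LkerRR}; I expect the orientation/sign to work out so that the final three-form is positive.

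The main obstacle I anticipate is the careful handling of the numerator three-form: one must correctly pull back $\dee\rho\w\deebar\dee\rho$ to $\bndry D$ using the real parametrization by $(s,\theta_1,\theta_2)$, keeping track of which terms survive the pullback (the tangential constraint from $\rho=\text{const}$ kills certain components) and tracking all the factors of $1/2$, $\bar\zeta_j/r_j = e^{-i\theta_j}$, and the Jacobian between $(r_1,r_2)$ and $s$. The algebra is routine but sign- and factor-sensitive, and verifying that the numerator collapses to precisely $ds\w d\theta_1\w d\theta_2/(4\pi^2)$ with no leftover dependence on $\phi''$ or other second-order data — consistent with the kernel's defining-function independence — is where the real care is needed. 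Everything else reduces to the algebraic identities already recorded in \eqref{E:subst2} and \eqref{E:hidden-u-var}.
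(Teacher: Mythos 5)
Your proposal is correct and takes essentially the same route as the paper's proof: a direct computation of the numerator $j^*(\dee\rho\w\deebar\dee\rho)$ and the denominator $[\dee\rho(\zeta)\bullet(\zeta-w)]^2$ in the $(s,\theta_1,\theta_2)$ coordinates using \eqref{E:subst2}, \eqref{E:hidden-u-var} and \eqref{E:ds/dt}, the only cosmetic difference being that you cancel the normalizing factor up front by rescaling the defining function, whereas the paper carries it through both numerator and denominator (obtaining $j^*(\dee\rho\w\deebar\dee\rho)=-\tfrac{r_2^2}{4(1-s)^2}\,ds\w d\theta_1\w d\theta_2$) and cancels on division. One small slip to fix: the rescaling factor must be $2/\bigl(r_2-r_1\phi'(r_1)\bigr)$, not $2\bigl(r_2-r_1\phi'(r_1)\bigr)$, in order to produce the coefficients $\tfrac{s}{r_1}\cdot\tfrac{\bar\zeta_1}{r_1}$ and $\tfrac{1-s}{r_2}\cdot\tfrac{\bar\zeta_2}{r_2}$ that you then (correctly) use in the rest of the argument.
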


\begin{proof}
From \eqref{E:subst2}, \eqref{E:p-one}, \eqref{E:ds/dt} and \eqref{E:ds/dt-rev} we obtain $\phi'(r_1)=-\frac{s}{1-s}\frac{r_2}{r_1}$, 
$\phi''(r_1)=-\frac{(p-1)s}{(1-s)^2}\frac{r_2}{r_1^2}$, $dr_1=\frac{r_1}{p}\frac{ds}{s}$ and $dr_2=-\frac{r_2}{p}\frac{ds}{1-s}$.

Using \eqref{E:Lker} to compute $L(\zeta,w)$ we first compute
$\dee\rho\w\deebar\dee\rho$ with $\rho$ as in \eqref{E:rhoRR};
 then, setting $z_j=r_je^{i\theta_j}$ and applying the above formulae we obtain
\begin{equation}\label{E:leray-num}
j^*(\dee\rho\w\deebar\dee\rho) = -\frac{r_2^2}{4(1-s)^{2}}\,{ds\w d\theta_1\w d\theta_2}.
\end{equation}
Turning our attention to the denominator we find that
\begin{multline*}
\left(\dee\rho(\zeta)\bullet(\zeta-w)\right)^2 \\= 
\left(
\frac12 e^{-i\theta_1}\frac{s}{1-s}\frac{r_2}{r_1}(r_1e^{i\theta_1}-w_1)
+\frac12 e^{-i\theta_2}(r_2e^{i\theta_2}-w_2)
\right)^2.
\end{multline*}
Dividing and simplifying we obtain \eqref{E:LkerRR}.
\end{proof}

From \eqref{E:HB-s} we have $\left|e^{-i\theta_1}\frac{s}{r_1}w_1+e^{-i\theta_2}\frac{1-s}{r_2}w_2\right|<1$.  Thus by the differentiated geometric series we have
\begin{equation*}
L(\zeta,w)= \frac{ds\w d\theta_1\w d\theta_2}{4\pi^2} \sum\limits_{j=0}^\infty
(j+1) \left(e^{-i\theta_1}\frac{s}{r_1}w_1+e^{-i\theta_2}\frac{1-s}{r_2}w_2\right)^j.
\end{equation*}
Using the binomial theorem we obtain the following result.

\begin{Lem}\label{L:Lker-series} The Leray kernel admits the expansion
\begin{multline*}
L(\zeta,w)\\
= \frac{ds\w d\theta_1\w d\theta_2}{4\pi^2} \sum\limits_{n,m=0}^\infty
\frac{(n+m+1)!}{n! \,m!} \left(\frac{s}{r_1}\right)^n \left(\frac{1-s}{r_2}\right)^m\\
\cdot
w_1^n w_2^m e^{-i(n\theta_1+m\theta_2)}
\end{multline*}
converging uniformly (with exponential speed) for $w$ in any compact subset of $D$. (In fact, the convergence is uniform on compact subsets of $\bar D\setminus\{\zeta\}$.)
\end{Lem}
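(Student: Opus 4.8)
The plan is to read off the double series directly from the closed form of the kernel in Lemma \ref{L:Lker} by combining two classical expansions. Abbreviating $z=e^{-i\theta_1}\frac{s}{r_1}w_1+e^{-i\theta_2}\frac{1-s}{r_2}w_2$, formula \eqref{E:LkerRR} presents $L(\zeta,w)$ as $\frac{ds\w d\theta_1\w d\theta_2}{4\pi^2}\cdot\frac{1}{(1-z)^2}$. The first ingredient is Lemma \ref{L:HB}: for $w\in D$ and $\zeta\in\bndry D\setminus\{\zeta_1\zeta_2=0\}$ the triangle inequality together with \eqref{E:HB-s} gives $|z|\le\frac{s}{r_1}|w_1|+\frac{1-s}{r_2}|w_2|<1$, so $z$ lies in the open unit disc (note that any $w\in D$ has $(|w_1|,|w_2|)$ interior to $\gamma$, hence distinct from the fixed $(r_1,r_2)\in\gamma_+$). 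This justifies the differentiated geometric series $\frac{1}{(1-z)^2}=\sum_{j\ge0}(j+1)z^j$, which is precisely the intermediate expansion already displayed just before the statement.

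The second, purely algebraic, step is to expand each $z^j=(A+B)^j$ by the binomial theorem with $A=e^{-i\theta_1}\frac{s}{r_1}w_1$ and $B=e^{-i\theta_2}\frac{1-s}{r_2}w_2$, and then reindex the sum over $(j,n)$ by $(n,m)$ with $j=n+m$. Since $A^nB^m=\left(\frac{s}{r_1}\right)^n\left(\frac{1-s}{r_2}\right)^m w_1^nw_2^m\,e^{-i(n\theta_1+m\theta_2)}$ and the combined coefficient collapses via $(n+m+1)\binom{n+m}{n}=\frac{(n+m+1)!}{n!\,m!}$, one arrives exactly at the asserted formula. The passage from the single index $j$ to the double index $(n,m)$ is legitimate by absolute convergence, which holds because $|A|+|B|<1$.

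For the convergence claim I would fix a compact $K\subset D$ and observe that $w\mapsto\frac{s}{r_1}|w_1|+\frac{1-s}{r_2}|w_2|$ is continuous and, by Lemma \ref{L:HB}, strictly less than $1$ at each point of $K$; compactness then yields a uniform bound $\delta=\delta(K)<1$. The tail of the series is dominated in modulus by $\sum_{j\ge N}(j+1)\delta^j\to0$, giving uniform convergence at geometric rate. Lemma \ref{L:squ} enters to guarantee that the coefficients $\frac{s}{r_1}$ and $\frac{1-s}{r_2}$ stay bounded as $\zeta$ approaches the axes, so the estimate is stable in $\zeta$ as well.

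The delicate point, and what I expect to be the main obstacle, is the parenthetical refinement to compact subsets of $\bar D\setminus\{\zeta\}$, where $w$ may reach the boundary. Here $K$ is no longer interior and the pointwise bound of Lemma \ref{L:HB} is not enough: on the torus $\{(|w_1|,|w_2|)=(r_1,r_2)\}\subset\bndry D$ the strict inequality degenerates to $|z|=1$ even for $w\ne\zeta$, and at such points the summand $(j+1)z^j$ fails to decay, so the plain geometric domination breaks down. To recover the claim one must supplement the pointwise estimate with a quantitative lower bound $1-|z|\ge c(K)>0$ holding on the relevant compacta — using the continuity from Lemma \ref{L:squ} together with a sharper (uniform) form of the strict convexity underlying Lemma \ref{L:HB}, and treating the neighborhood of that torus separately; this refinement I would carry out on its own rather than fold into the interior argument.
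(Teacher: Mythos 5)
Your treatment of the main claim is correct and coincides with the paper's own proof, which consists entirely of the two displays preceding the Lemma: Lemma \ref{L:HB} gives $\bigl|e^{-i\theta_1}\tfrac{s}{r_1}w_1+e^{-i\theta_2}\tfrac{1-s}{r_2}w_2\bigr|<1$ for $w\in D$, then the differentiated geometric series and the binomial theorem produce the double series; the uniform geometric bound on compacta of $D$ (continuity, compactness, and Lemma \ref{L:squ} for stability in $\zeta$) is exactly the routine supplement you describe.

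The genuine flaw is in your final paragraph. You correctly locate the obstruction to the parenthetical claim, but then propose to repair it by establishing $1-|z|\ge c(K)>0$ on compacta $K\subset\bar D\setminus\{\zeta\}$; no such bound can exist, because the parenthetical is false as stated --- it is an overstatement in the paper itself, for which the paper offers no argument. Writing $z=e^{-i\theta_1}\tfrac{s}{r_1}w_1+e^{-i\theta_2}\tfrac{1-s}{r_2}w_2$, one checks that for $w\in\bar D$ one has $|z|=1$ precisely when $w$ lies on the circle $C_\zeta=\{(e^{i\beta}\zeta_1,e^{i\beta}\zeta_2)\st\beta\in\R\}$: off the torus $\{(|w_1|,|w_2|)=(r_1,r_2)\}$ this is Lemma \ref{L:HB}, while on that torus $z=se^{i(\alpha_1-\theta_1)}+(1-s)e^{i(\alpha_2-\theta_2)}$ is a proper convex combination of unit vectors, so $|z|<1$ unless the two phases agree. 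At $w=(e^{i\beta}\zeta_1,e^{i\beta}\zeta_2)$ with $\beta\not\equiv0\pmod{2\pi}$ --- a point of $\bar D\setminus\{\zeta\}$ --- we get $z=e^{i\beta}\ne1$, and there the series genuinely diverges: the diagonal groups $(j+1)z^j$ arising from the geometric series have modulus $j+1\to\infty$, and the individual terms of the double series are themselves unbounded (along $n\approx s(n+m)$ they grow like $\sqrt{n+m}$), so grouped, unordered, and Pringsheim convergence all fail, let alone uniform convergence on a compactum containing such a $w$, even though the closed form $(1-z)^{-2}$ is finite there. So rather than attempting a recovery, you should conclude that the correct statement removes the whole circle $C_\zeta$ instead of the single point $\zeta$; on compact subsets of $\bar D\setminus C_\zeta$ one has $\sup|z|<1$ and your continuity-and-compactness argument applies verbatim. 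Nothing downstream in the paper is affected, since Corollary \ref{C:L-mono} and formula \eqref{E:mono-bndry} use the expansion only for $w\in D$.
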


\begin{Def}
We say that a function $f$ on $\bndry D$ is an {\em $(n,m)$-monomial} if it takes the form  
\begin{equation}\label{E:mono-def}
f(\zeta)=g(s)e^{i(n\theta_1+m\theta_2)}.
\end{equation}
\end{Def}

\begin{Cor}\label{C:L-mono}
If $f$ is an  $(n,m)$-monomial of the form \eqref{E:mono-def} then for $w\in D$ we have
\begin{equation}\label{E:L-mono}
\Lop f(w)
=\begin{cases}
\quad 0 & \qquad\qquad\qquad\qquad\qquad\text{ if }\min\{n, m\} <0;\\
\frac{(n+m+1)!}{n! \,m!}
&\left(
{\displaystyle \int_0^{1}} 
g(s) \left(\dfrac{s}{r_1}\right)^n \left(\dfrac{1-s}{r_2}\right)^m \,ds \right)
w_1^n w_2^m \\
&\qquad\qquad\qquad\qquad\qquad\text{ if }\min\{n, m\}\ge 0.
\end{cases}
\end{equation}
\end{Cor}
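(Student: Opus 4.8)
The plan is to substitute the uniformly convergent series for $L(\zeta,w)$ furnished by Lemma \ref{L:Lker-series} directly into the defining integral \eqref{E:Lop} for $\Lop f(w)$, integrate term by term, and let the orthogonality of the angular exponentials collapse the double series to a single surviving term. First I would fix $w$ in a compact subset of $D$ and parametrize $\bndry D\setminus\{\zeta_1\zeta_2=0\}$ by $(s,\theta_1,\theta_2)$ as in \eqref{E:s-theta-param}; since the axes carry no mass for the three-form $L$, this parametrization suffices. With $f(\zeta)=g(s)e^{i(n\theta_1+m\theta_2)}$, the integrand $f(\zeta)L(\zeta,w)$ then becomes $g(s)e^{i(n\theta_1+m\theta_2)}$ times the series of Lemma \ref{L:Lker-series}.

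Next I would justify integrating term by term. By Lemma \ref{L:Lker-series} the series converges uniformly in $\zeta$ with exponential speed for the fixed $w$, so its tail after $N$ terms is dominated by $C\,r^{N}$ for some $r<1$ independent of $\zeta$; multiplying by the unimodular factor $e^{i(n\theta_1+m\theta_2)}$ and by $g(s)$ and integrating, the tail contributes at most $C\,r^{N}\int_0^1|g(s)|\,ds$, which tends to $0$ provided $g\in L^1[0,1]$. This integrability is implicit in the convergence of the right-hand side of \eqref{E:L-mono} (recall $s/r_1$ and $(1-s)/r_2$ are bounded by Lemma \ref{L:finite-coeff}), so the interchange of summation and integration is legitimate.

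I would then carry out the $\theta_1$- and $\theta_2$-integrations inside the sum. Each term of the series carries the angular factor $e^{-i(n'\theta_1+m'\theta_2)}$ with $n',m'\ge0$, so after multiplication by $e^{i(n\theta_1+m\theta_2)}$ the relation $\int_0^{2\pi}e^{i(k-k')\theta}\,d\theta=2\pi\,\delta_{k,k'}$ annihilates every term except the one with $(n',m')=(n,m)$. When $\min\{n,m\}\ge0$ this matching term does occur, and the two angular integrations produce a factor $(2\pi)^2$ which exactly cancels the $4\pi^2$ in the denominator of Lemma \ref{L:Lker-series}, leaving
\[
\Lop f(w)=\frac{(n+m+1)!}{n!\,m!}\left(\int_0^1 g(s)\left(\frac{s}{r_1}\right)^n\left(\frac{1-s}{r_2}\right)^m\,ds\right)w_1^n w_2^m,
\]
which is the second line of \eqref{E:L-mono}. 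When $\min\{n,m\}<0$ the frequency $(n,m)$ matches no index pair $(n',m')$ with $n',m'\ge0$, so \emph{every} term integrates to zero and $\Lop f(w)=0$, giving the first line.

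The only genuine subtlety is the term-by-term integration, and that is already supplied by the uniform exponential convergence asserted in Lemma \ref{L:Lker-series}; once it is in hand, the rest is the routine orthogonality computation above, with no further analytic input required.
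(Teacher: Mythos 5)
Your proof is correct and is essentially the paper's own argument: the paper proves the corollary by citing Lemma \ref{L:Lker-series} (term-by-term integration of the uniformly convergent kernel series against $f$, with the angular orthogonality collapsing the double sum), which is exactly the computation you carry out in detail.
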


\begin{proof}
This follows from Lemma \ref{L:Lker-series} (or Lemma \ref{L:Lker} and a residue computation).
\end{proof}

If $f(\zeta)=\zeta_1^n\zeta_2^m$ then applying Corollary \ref{C:L-mono} with $g(s)=r_1^n r_2^m$ 
and recalling that
\begin{equation}\label{E:beta}
\int_0^1 s^n(1-s)^m\,ds=\frac{n! \,m!}{(n+m+1)!}
\end{equation}
we find that $\Lop f(w)=w_1^n w_2^m$ for $w\in D$.  Taking sums we obtain the following.

\begin{Cor}\label{C:poly-rep}
The operator $\Lop$ reproduces holomorphic polynomials from their restrictions to $\bndry D$.
\end{Cor}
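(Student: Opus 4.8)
The plan is to reduce the claim to the single-monomial case already prepared by Corollary \ref{C:L-mono} and the Beta-integral identity \eqref{E:beta}, and then pass to general polynomials by linearity.

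First I would fix a single holomorphic monomial $f(\zeta)=\zeta_1^n\zeta_2^m$ with $n,m\ge 0$. Writing $\zeta_j=r_je^{i\theta_j}$ exhibits $f$ as an $(n,m)$-monomial in the sense of \eqref{E:mono-def}, with profile $g(s)=r_1^n r_2^m$; here $r_1,r_2$ are regarded as functions of $s$ along $\gamma_+$ via the parametrization \eqref{E:r-p-breve}. Since $n,m\ge0$, we are in the $\min\{n,m\}\ge0$ branch of \eqref{E:L-mono}.

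Next I would substitute this $g$ into the formula \eqref{E:L-mono}. The factors $r_1^n$ and $r_2^m$ in $g$ cancel against the $(s/r_1)^n$ and $((1-s)/r_2)^m$ in the integrand, collapsing the boundary integral to $\int_0^1 s^n(1-s)^m\,ds$. By \eqref{E:beta} this equals $n!\,m!/(n+m+1)!$, which is exactly the reciprocal of the combinatorial prefactor $(n+m+1)!/(n!\,m!)$ appearing in \eqref{E:L-mono}. The two cancel and leave $\Lop f(w)=w_1^n w_2^m=f(w)$, so $\Lop$ reproduces each monomial. Finally, since every holomorphic polynomial is a finite $\C$-linear combination of the $\zeta_1^n\zeta_2^m$ with $n,m\ge0$, and $\Lop$ is linear, reproduction of each monomial forces reproduction of the polynomial.

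There is no genuine obstacle in this step: all of the analytic content has already been absorbed into the kernel expansion of Lemma \ref{L:Lker-series} — whose uniform convergence on compact subsets of $D$ justified the termwise integration yielding Corollary \ref{C:L-mono} — together with the elementary Beta integral \eqref{E:beta}. The corollary is simply the formal harvesting of these two facts, with the precise numerical matching between the multinomial weight in the Leray kernel and the Beta-function normalization being the point where the reproduction property becomes visible.
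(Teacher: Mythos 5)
Your proof is correct and is essentially identical to the paper's own argument: both apply Corollary \ref{C:L-mono} with $g(s)=r_1^nr_2^m$, cancel the resulting Beta integral \eqref{E:beta} against the prefactor $\frac{(n+m+1)!}{n!\,m!}$, and conclude by linearity. Nothing is missing.
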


Returning to Corollary \ref{C:L-mono} we see that when $f$ is an $(n,m)$-monomial $g(s)e^{i(n\theta_1+m\theta_2)}$ then $\Lop f$ extends continuously to $\bar D$ with boundary values given (in the non-trivial cases) by 
\begin{multline}\label{E:mono-bndry}
\Lop f(R_1e^{i\theta_1},R_2e^{i\theta_2})\\=\frac{(n+m+1)!}{n! \,m!}
\Bigg(\int_0^{1} 
g( s) \left(\dfrac{ s}{ r_1}\right)^n \left(\dfrac{1- s}{ r_2}\right)^m \,d s
\Bigg)
R_1^{^{\, n}}\, R_2^{^{\,m }}e^{^{i(n\theta_1+m\theta_2)}}.
\end{multline}
In particular, $\Lop$ maps $(n,m)$-monomials to $(n,m)$-monomials.

\medskip

Let $\mu$ be a rotation-invariant measure on $\bndry D$ described by
\begin{equation}\label{E:meas1}
d\mu=\frac{1}{4\pi^2}\omega(s)\,ds\,d\theta_1\,d\theta_2,
\end{equation}
where $\omega(s)$ is measurable and positive a.e.

\begin{Def}\label{D:Four-piece}
Let $L^2_{n,m}(\bndry D,\mu)$ denote the space of $(n,m)$-monomials \eqref{E:mono-def} that are in $L^2(\bndry D,\mu)$.
\end{Def}

The spaces $L^2_{n_1,m_1}(\bndry D,\mu)$ and $L^2_{n_2,m_2}(\bndry D,\mu)$ are orthogonal subspaces of $L^2(\bndry D,\mu)$ unless $(n_1,m_1)=(n_2,m_2)$.
Note also that if $f_1(\zeta)=g_1(s)e^{i(n\theta_1+m\theta_2)}$ and $f_2(\zeta)=g_2(s)e^{i(n\theta_1+m\theta_2)}$ are in $L^2_{n,m}(\bndry D,\mu)$ then the Hermitian inner product $\langle f_1, f_2\rangle$ of the monomials in $L^2_{n,m}(\bndry D,\mu)$ is just \linebreak $\int_0^1 g_1(s) \bar{g_2(s)} \omega(s)\, ds.$

\begin{Prop} \label{P:piece-norm}
When $n,m\ge 0$, the restriction $\Lop_{n,m}$ of $\Lop$ to \linebreak
$L^2_{n,m}(\bndry D,\mu)$ is a rank-one projection operator with $L^2$ operator norm given by
\begin{multline}\label{E:piece-norm}
\|\Lop_{n,m}\|_\mu^2\\
=
\left(\frac{(n+m+1)!}{n! \,m!}\right)^2
\int_0^{1} \left(\dfrac{  s}{r_1}\right)^{2n} \left(\dfrac{1- s}{ r_2}\right)^{2m}\frac{1}{\omega(s)}\,ds\\
\cdot\int_0^{1} r_1^{2n} r_2^{2m}\omega(s)\,ds.
\end{multline}
\end{Prop}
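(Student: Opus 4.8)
The plan is to read the structure of $\Lop_{n,m}$ directly off the boundary-value formula \eqref{E:mono-bndry}, recognize it as a rank-one \emph{projection} by way of the beta integral \eqref{E:beta}, and then invoke the elementary norm identity for a rank-one operator on a Hilbert space. First I would record the shape of the operator. If $f(\zeta)=g(s)e^{i(n\theta_1+m\theta_2)}\in L^2_{n,m}(\bndry D,\mu)$ with $n,m\ge0$, then by \eqref{E:mono-bndry} the restriction of $\Lop f$ to $\bndry D$ is again an $(n,m)$-monomial whose radial factor is a single scalar multiple of the fixed function $h(s)=r_1^n r_2^m$. Using that under \eqref{E:meas1} the inner product on $L^2_{n,m}$ is $\langle g_1,g_2\rangle=\int_0^1 g_1\bar g_2\,\omega\,ds$, I would express the scalar as a weighted inner product and write
\[
\Lop_{n,m}f=\langle g,v\rangle\,h\,e^{i(n\theta_1+m\theta_2)},\qquad
v(s)=\frac{(n+m+1)!}{n!\,m!}\,\frac{1}{\omega(s)}\left(\frac{s}{r_1}\right)^n\left(\frac{1-s}{r_2}\right)^m,
\]
where $v$ is real so conjugation is harmless. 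This presentation makes the rank-one claim immediate, the range being spanned by $h\,e^{i(n\theta_1+m\theta_2)}$.

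Next I would confirm the projection property by computing $\langle h,v\rangle$. The factors $r_1^{\pm n}r_2^{\pm m}$ cancel, leaving $\frac{(n+m+1)!}{n!\,m!}\int_0^1 s^n(1-s)^m\,ds$, which equals $1$ by \eqref{E:beta}. Since any operator of the form $Tg=\langle g,v\rangle h$ satisfies $T^2=\langle h,v\rangle\,T$, this identity gives $\Lop_{n,m}^2=\Lop_{n,m}$, so $\Lop_{n,m}$ is a rank-one projection (not in general self-adjoint, since $h$ and $v$ need not be proportional).

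Finally I would compute the norm. For a rank-one operator $Tg=\langle g,v\rangle h$ one has $\|Tg\|=|\langle g,v\rangle|\,\|h\|\le\|g\|\,\|v\|\,\|h\|$ by Cauchy--Schwarz, with equality at $g=v$; hence $\|\Lop_{n,m}\|_\mu=\|h\|\,\|v\|$. Squaring and substituting $\|h\|^2=\int_0^1 r_1^{2n}r_2^{2m}\,\omega\,ds$ and $\|v\|^2=\left(\frac{(n+m+1)!}{n!\,m!}\right)^2\int_0^1\left(\frac{s}{r_1}\right)^{2n}\left(\frac{1-s}{r_2}\right)^{2m}\frac{1}{\omega}\,ds$ then yields \eqref{E:piece-norm}. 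The only step demanding genuine care --- the ``main obstacle,'' such as it is --- is the correct placement of the weight $\omega$: the integral coefficient appearing in \eqref{E:mono-bndry} is unweighted, so one must divide by $\omega$ when rewriting it as a weighted inner product, and it is precisely this $1/\omega$ that produces the $1/\omega$ factor in the first integral of \eqref{E:piece-norm} while $\omega$ survives in the second.
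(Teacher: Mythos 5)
Your proposal is correct and follows essentially the same route as the paper: your $v$ and $h$ are exactly the paper's $\kappa_{n,m}$ and $\tau_{n,m}$ from \eqref{E:kappa-tau}, the beta-integral identity \eqref{E:beta} gives $\langle \tau_{n,m},\kappa_{n,m}\rangle=1$ and hence idempotence, and the norm identity $\|\Lop_{n,m}\|_\mu=\|\kappa_{n,m}\|_\mu\,\|\tau_{n,m}\|_\mu$ is obtained by the same rank-one/Cauchy--Schwarz argument. Your care with the placement of the weight $\omega$ (the $1/\omega$ inside $\kappa_{n,m}$) is precisely what the paper's definition encodes.
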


\begin{proof}
Set 
\begin{align}\label{E:kappa-tau}
\kappa_{n,m}&=\frac{(n+m+1)!}{n! \,m!}
\left(\dfrac{  s}{r_1}\right)^{n}\left(\dfrac{1- s}{ r_2}\right)^{m}
\frac{1}{\omega(s)}e^{i(n\theta_1+m\theta_2)},\\
 \tau_{n,m}&=r_1^{n}r_2^{m}e^{i(n\theta_1+m\theta_2)}.\notag
\end{align}
Then from \eqref{E:mono-bndry} and Corollary \ref{C:L-mono} and using the formula above for the inner product  in $L_{n,m}^2(\bndry D, \mu)$ we have
\begin{equation}\label{E:kaptau}
\Lop_{n,m}(f)=\langle f,\kappa_{n,m}\rangle\,\tau_{n,m}
\end{equation}
and \eqref{E:beta} yields
\begin{equation}\label{E:kt1}
\langle \tau_{n,m},\kappa_{n,m}\rangle=1
\end{equation}
so that
\begin{equation}\label{E:lmn2}
\Lop_{n,m}^2=\Lop_{n,m}
\end{equation}
and 
\begin{equation}\label{E:lkt}
\|\Lop_{n,m}\|_\mu^2
=
\|\kappa_{n,m}\|_\mu^2\,\,\|\tau_{n,m}\|_\mu^2\, .
\end{equation}
\end{proof}

\begin{Thm}\label{T:adm}
Let $D\in\tilde\RR$ and let $\mu$ be a rotation-invariant measure on $\bndry D$ described by
\eqref{E:meas1}
with $\omega(s)$  measurable and positive a.e. Then the following conditions are equivalent.
\refstepcounter{equation}\label{N:adm}
\begin{enum}
\item $\int_0^{1}\frac{1}{\omega(s)}\,ds$ and $\int_0^{1} \omega(s)\,ds$ are finite. \label{I:adm}
\item The measure $\mu$ is finite, and the functional $f\mapsto (\Lop f)(0)$ is bounded on $L^2(\bndry D,\mu)$. \label{I:w=0}
\item The measure $\mu$ is finite, and for each $w\in D$ the  functional $f\mapsto (\Lop f)(w)$ is bounded on $L^2(\bndry D,\mu)$. \label{I:w-gen}
\item $\|\Lop_{0,0}\|_\mu<\infty$. \label{I:n=m=0}
\item For each $(n,m)$ we have $\|\Lop_{n,m}\|_\mu<\infty$. \label{I:(n,m)-gen}
\end{enum}
\end{Thm}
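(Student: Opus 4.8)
The goal is to prove the equivalence of five conditions characterizing admissibility of the measure $\mu$. The plan is to establish the cyclic chain of implications \itemref{N:adm}{I:adm} $\Rightarrow$ \itemref{N:adm}{I:(n,m)-gen} $\Rightarrow$ \itemref{N:adm}{I:n=m=0} $\Rightarrow$ \itemref{N:adm}{I:w=0} $\Rightarrow$ \itemref{N:adm}{I:w-gen} $\Rightarrow$ \itemref{N:adm}{I:adm}, with the core analytic content packaged entirely in the norm formula \eqref{E:piece-norm} from Proposition \ref{P:piece-norm}.

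First I would show \itemref{N:adm}{I:adm} $\Rightarrow$ \itemref{N:adm}{I:(n,m)-gen}. Since $0\le s\le 1$ and the coefficients $s/r_1$, $(1-s)/r_2$, $r_1$, $r_2$ are all bounded (Lemma \ref{L:finite-coeff} bounds $s/r_1$ and $(1-s)/r_2$ above by $1/b_1$ and $1/b_2$, and $r_1, r_2$ are bounded since $\bar D$ is bounded), the two integrands in \eqref{E:piece-norm} are dominated by constants times $1/\omega(s)$ and $\omega(s)$ respectively. Hence finiteness of $\int_0^1 \omega^{-1}$ and $\int_0^1\omega$ forces both factors in \eqref{E:piece-norm} to be finite, so $\|\Lop_{n,m}\|_\mu<\infty$ for every $(n,m)$. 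The implication \itemref{N:adm}{I:(n,m)-gen} $\Rightarrow$ \itemref{N:adm}{I:n=m=0} is trivial (specialize to $n=m=0$). For \itemref{N:adm}{I:n=m=0} $\Rightarrow$ \itemref{N:adm}{I:w=0}, I note that $\Lop_{0,0}$ is the restriction of $\Lop$ to the radial functions $L^2_{0,0}(\bndry D,\mu)$, and by \eqref{E:mono-bndry} the value $(\Lop f)(0)$ depends only on the $(0,0)$-Fourier component of $f$; so boundedness of $f\mapsto(\Lop f)(0)$ on $L^2(\bndry D,\mu)$ is equivalent to $\|\kappa_{0,0}\|_\mu<\infty$, while finiteness of $\mu$ is $\|\tau_{0,0}\|_\mu^2=\int_0^1\omega(s)\,ds<\infty$; together these give $\|\Lop_{0,0}\|_\mu^2=\|\kappa_{0,0}\|_\mu^2\,\|\tau_{0,0}\|_\mu^2<\infty$ via \eqref{E:lkt}, and conversely finiteness of this product plus the reproducing property (Corollary \ref{C:poly-rep}, giving $(\Lop 1)(0)=1$ so the functional is nonzero and $\|\tau_{0,0}\|$ cannot be absorbed) must be disentangled carefully.

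The implication \itemref{N:adm}{I:w=0} $\Rightarrow$ \itemref{N:adm}{I:w-gen} should follow from the uniform-convergence statement in Lemma \ref{L:Lker-series}: for fixed $w\in D$ the functional $f\mapsto(\Lop f)(w)$ is given by integration against the kernel $L(\zeta,w)$, whose series converges uniformly on $\bndry D$, and one can compare the $w$-functional to the $w=0$ functional using that the monomial coefficients $(s/r_1)^n(1-s)^m/r_2^m$ are bounded and $|w_j|<r_j^{\max}$, so the Riesz representatives differ by a bounded multiplicative adjustment. Finally, for \itemref{N:adm}{I:w-gen} $\Rightarrow$ \itemref{N:adm}{I:adm}, finiteness of $\mu$ directly gives $\int_0^1\omega(s)\,ds<\infty$, while boundedness of $f\mapsto(\Lop f)(0)$ forces $\|\kappa_{0,0}\|_\mu^2=\int_0^1\omega(s)^{-1}\,ds<\infty$ by the Riesz representation theorem (the functional on $L^2_{0,0}(\bndry D,\mu)$ is $\langle\,\cdot\,,\kappa_{0,0}\rangle$, so its norm equals $\|\kappa_{0,0}\|_\mu$).

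The main obstacle, I expect, is the direction \itemref{N:adm}{I:n=m=0} $\Rightarrow$ \itemref{N:adm}{I:w=0} (and the parallel reverse step), because $\|\Lop_{0,0}\|_\mu<\infty$ is a single \emph{product} $\|\kappa_{0,0}\|_\mu^2\,\|\tau_{0,0}\|_\mu^2$ being finite, which a priori does not separately bound each factor; I must use an independent input to pin down one factor. The clean way around this is to observe that $\|\tau_{0,0}\|_\mu^2=\int_0^1\omega\,ds=\mu(\bndry D)$ is exactly the finiteness-of-$\mu$ hypothesis built into condition \itemref{N:adm}{I:w=0}, so once $\mu$ is assumed finite the product being finite is equivalent to $\|\kappa_{0,0}\|_\mu<\infty$, which in turn is the boundedness of the evaluation functional at $0$. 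Thus the two conditions of \itemref{N:adm}{I:w=0} pair off precisely with the two factors in \eqref{E:lkt}, and the apparent difficulty dissolves provided I am careful to keep the finiteness-of-$\mu$ clause attached at every stage where it is needed.
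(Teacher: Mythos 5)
Your cyclic chain breaks at the link \itemref{N:adm}{I:n=m=0} $\Rightarrow$ \itemref{N:adm}{I:w=0}, and the patch you offer for it is circular. In that direction of the chain, finiteness of $\mu$ is part of the \emph{conclusion}, not a hypothesis; yet your resolution of the ``main obstacle'' begins ``once $\mu$ is assumed finite, the product being finite is equivalent to $\|\kappa_{0,0}\|_\mu<\infty$.'' That reasoning correctly gives the reverse implication \itemref{N:adm}{I:w=0} $\Rightarrow$ \itemref{N:adm}{I:n=m=0} (pair the two clauses of \itemref{N:adm}{I:w=0} with the two factors in \eqref{E:lkt}), but it never derives $\mu(\bndry D)<\infty$ from $\|\Lop_{0,0}\|_\mu<\infty$, so your cycle does not close.

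The missing observation is elementary, and it is exactly what makes the paper's handling of this point one line long: since $\omega>0$ a.e., both factors $\int_0^1 \omega(s)\,ds$ and $\int_0^1 \omega(s)^{-1}\,ds$ are \emph{strictly positive} elements of $(0,\infty]$, and a product of two strictly positive quantities is finite only if each factor is finite. Hence \itemref{N:adm}{I:n=m=0} $\Rightarrow$ \itemref{N:adm}{I:adm} directly --- this is the sense in which the paper declares the equivalence of \itemref{N:adm}{I:adm} and \itemref{N:adm}{I:n=m=0} ``immediate from \eqref{E:piece-norm}'' --- and then your own pairing argument yields \itemref{N:adm}{I:w=0}. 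With that single sentence inserted, your proof closes and is essentially the paper's proof rearranged as a cycle: your domination argument for \itemref{N:adm}{I:adm} $\Rightarrow$ \itemref{N:adm}{I:(n,m)-gen} via Lemma \ref{L:finite-coeff} matches the paper's use of Lemma \ref{L:squ}, and your step \itemref{N:adm}{I:w=0} $\Rightarrow$ \itemref{N:adm}{I:w-gen} is the paper's kernel-ratio comparison, bounding $\left|L(\zeta,w)/L(\zeta,0)\right|$ above and below. On that last step, note that boundedness of the coefficients $\left(\frac{s}{r_1}\right)^n\left(\frac{1-s}{r_2}\right)^m$ together with $|w_j|<r_j^{\max}$ is not by itself enough to bound the sum; one needs the uniform strict inequality $\sup_{\zeta\in\bndry D}\left(\frac{s}{r_1}|w_1|+\frac{1-s}{r_2}|w_2|\right)<1$ for fixed $w\in D$, which is where the compactness/continuity content of Lemmas \ref{L:HB} and \ref{L:squ} (packaged in the uniformity claim of Lemma \ref{L:Lker-series}) genuinely enters.
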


\begin{proof}
The equivalence of items \itemref{N:adm}{I:adm} and \itemref{N:adm}{I:n=m=0} is immediate from \eqref{E:piece-norm}. Similarly, \eqref{E:piece-norm}
 together with Lemma \ref{L:squ} and the boundedness of $r_1$ and $r_2$
 show in turn  that items \itemref{N:adm}{I:n=m=0} and\itemref{N:adm}{I:(n,m)-gen} are equivalent.

To see that items \itemref{N:adm}{I:w=0} and \itemref{N:adm}{I:adm} are equivalent, note that \[\mu(\bndry D)=\int_0^{1} \omega(s)\,ds\] and that from \eqref {E:LkerRR}   and \eqref{E:Lop} we have that \[\Lop f(0)=\int\limits_{\bndry D} \frac{1}{\omega(s)}f(\zeta)\,d\mu = \left<f, 1/\omega\right>\] is a linear functional on $L^2(\bndry D,\mu)$ with norm $\sqrt{\int_0^{1}\frac{1}{\omega(s)}\,ds}$.

Finally, to see that items \itemref{N:adm}{I:w=0} and \itemref{N:adm}{I:w-gen} are equivalent, fix $w\in D$ and  note that
Lemmas \ref{L:HB} and \ref{L:squ} allow us to conclude  that 
\begin{equation*}
\sup\limits_{\zeta=\left(r_1e^{i\theta_1},r_2e^{i\theta_2}\right)\in\bndry D} \left|e^{-i\theta_1}\frac{s}{r_1}w_1+e^{-i\theta_2}\frac{1-s}{r_2}w_2\right|\le1.
\end{equation*}
Consulting \eqref{E:LkerRR} we see that
\begin{equation*}
\frac{1}{2}\leq 
\inf\limits_{\zeta\in\bndry D} \left| \frac{L(\zeta,w)}{L(\zeta,0)} \right| \le  \sup\limits_{\zeta\in\bndry D} \left| \frac{L(\zeta,w)}{L(\zeta,0)}\right| <\infty, 
\end{equation*}
from which the desired result follows immediately.
\end{proof}

\begin{Def}\label{D:adm}
We will call a measure of the form \eqref{E:meas1} {\em admissible} if it satisfies the equivalent conditions of Theorem \ref {T:adm}.
\end{Def}

\begin{Remark}\label{R:need-adm}
It is easy to see that any rotation-invariant measure $\mu$ on $\bndry D$ satisfying condition \itemref{N:adm}{I:w=0} in Theorem \ref{T:adm}
must in fact be of the form 
\eqref{E:meas1}
with $\omega(s)$  measurable and positive a.e.
\end{Remark}

Assume now that $\mu$ is admissible. Because $D$ is Reinhardt, any $f\in L^2(\bndry D,\mu)$ may be written uniquely as a sum
$f=\sum\limits_{n,m\in\Z} f_{n,m}$ converging in $L^2(\bndry D,\mu)$ where each $f_{n,m}$ is an $(n,m)$-monomial \eqref{E:mono-def} and \[\|f\|^2_\mu = \sum\limits_{n,m\in\Z}\|f_{n,m}\|^2_\mu.\]
If $\Lop$ is to define a bounded operator on $L^2(\bndry D,\mu)$ it must be given by
\begin{equation}\label{E:L-Four-def}
\Lop f = \sum_{n,m\ge 0} \Lop_{n,m} f_{n,m}
\end{equation}
and thus
\begin{equation*}
\|\Lop f\|^2_\mu = \sum_{n,m\ge 0} \|\Lop_{n,m} f_{n,m}\|^2_\mu.
\end{equation*}
From this we easily obtain the following.

\begin{Thm}\label{T:Lnrm}
$\Lop$ defines a bounded operator on $L^2(\bndry D,\mu)$ if and only if the quantities $\|\Lop_{n,m}\|_\mu$ given in \eqref{E:piece-norm} are uniformly bounded for ${n,m\ge 0}$; moreover,
\begin{equation*}
\|\Lop \|_\mu = \sup\, \{\|\Lop_{n,m} \|_\mu\st {n,m\ge 0} \}.
\end{equation*}
\end{Thm}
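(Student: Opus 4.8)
The plan is to exploit the orthogonal decomposition of $L^2(\bndry D,\mu)$ into the Fourier pieces $L^2_{n,m}(\bndry D,\mu)$ that has already been set up, together with the fact established just before the statement that $\Lop$ acts diagonally with respect to this decomposition via the formula \eqref{E:L-Four-def}. Since the $(n,m)$-monomials are mutually orthogonal and $\Lop$ sends an $(n,m)$-monomial to an $(n,m)$-monomial (and kills those with $\min\{n,m\}<0$), the entire operator splits as an orthogonal direct sum $\Lop=\bigoplus_{n,m\ge0}\Lop_{n,m}$ of its restrictions to the individual pieces.

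The key computation is the standard fact about the norm of an orthogonal direct sum of operators: if $f=\sum_{n,m\in\Z}f_{n,m}$ with $\|f\|_\mu^2=\sum\|f_{n,m}\|_\mu^2$, then $\|\Lop f\|_\mu^2=\sum_{n,m\ge0}\|\Lop_{n,m}f_{n,m}\|_\mu^2\le\big(\sup_{n,m\ge0}\|\Lop_{n,m}\|_\mu^2\big)\sum_{n,m\ge0}\|f_{n,m}\|_\mu^2\le S^2\|f\|_\mu^2$, where $S=\sup\{\|\Lop_{n,m}\|_\mu:n,m\ge0\}$. This shows that finiteness of $S$ yields boundedness with $\|\Lop\|_\mu\le S$. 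Conversely, testing $\Lop$ against a single normalized $(n,m)$-monomial shows $\|\Lop\|_\mu\ge\|\Lop_{n,m}\|_\mu$ for every $n,m\ge0$, hence $\|\Lop\|_\mu\ge S$; combining the two inequalities gives both the boundedness criterion and the exact norm identity $\|\Lop\|_\mu=S$.

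I would organize this as two short arguments. First I would record that the series defining $\Lop f$ in \eqref{E:L-Four-def} converges in $L^2$ precisely when the partial sums form a Cauchy sequence, which is controlled by $S$; one must check that $\Lop$ genuinely equals this diagonal operator, but this has already been justified in the paragraph preceding the statement (any bounded extension must be given by \eqref{E:L-Four-def}, since $\Lop$ reproduces polynomials and polynomials are dense). Then the upper bound $\|\Lop f\|_\mu^2\le S^2\|f\|_\mu^2$ is immediate from Parseval and the triangle of inequalities above, and the matching lower bound follows by restricting to each summand.

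The only point requiring genuine care is the justification that a bounded operator is forced to act by \eqref{E:L-Four-def} and that conversely \eqref{E:L-Four-def} does define a bounded operator agreeing with the integral operator $\Lop$ on the dense subspace of polynomials; but this identification is exactly what was argued in the discussion immediately before the theorem, so in the proof I would simply invoke it. Consequently the argument itself is routine Hilbert-space bookkeeping, and I do not expect any serious obstacle: the entire content has been front-loaded into Proposition \ref{P:piece-norm} (which computes each $\|\Lop_{n,m}\|_\mu$) and into the orthogonal-decomposition setup, leaving only the passage from pieces to the whole operator.
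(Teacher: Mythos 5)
Your proposal is correct and is essentially the paper's own argument: the paper derives Theorem \ref{T:Lnrm} directly from the orthogonal decomposition $f=\sum f_{n,m}$, the diagonal action \eqref{E:L-Four-def}, and the resulting Parseval identity $\|\Lop f\|_\mu^2=\sum\|\Lop_{n,m}f_{n,m}\|_\mu^2$, exactly as you do. One small inaccuracy worth fixing: the identification of $\Lop$ with the diagonal operator rests on $\Lop$ mapping each $(n,m)$-monomial to an $(n,m)$-monomial (with the span of all monomials dense in $L^2(\bndry D,\mu)$), not on density of holomorphic polynomials, which are dense only in $H^2(\bndry D,\mu)$.
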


Condition \itemref{N:adm}{I:(n,m)-gen} in Theorem \ref{T:adm} shows that
when $\mu$ is admissible then  the boundary values of holomorphic polynomials lie in $L^2(\bndry D,\mu)$.  This observation motivates the following.

\begin{Def}
 The Hardy space $H^2(\bndry D,\mu)$ is the closure  in $L^2(\bndry D,\mu)$ of the boundary values of holomorphic polynomials.
\end{Def}

  From Corollaries \ref{C:L-mono} and  \ref{C:poly-rep} and Proposition \ref{P:piece-norm} we obtain the following.

\begin{Prop}\label{P:Hardy}
If $\Lop$ defines a bounded operator on $L^2(\bndry D,\mu)$ then $\Lop$ is a projection operator from $L^2(\bndry D,\mu)$ onto $H^2(\bndry D,\mu)$.
\end{Prop}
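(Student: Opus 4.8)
The goal is to prove that $\Lop$, when bounded on $L^2(\bndry D,\mu)$, is a projection onto $H^2(\bndry D,\mu)$. The plan is to verify the two defining properties of such a projection: first, that $\Lop^2 = \Lop$ (idempotence), and second, that the range of $\Lop$ is exactly $H^2(\bndry D,\mu)$.

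First I would establish idempotence by reducing to the Fourier pieces. Since $\mu$ is admissible and rotation-invariant, any $f \in L^2(\bndry D,\mu)$ decomposes as $f = \sum_{n,m\in\Z} f_{n,m}$ into $(n,m)$-monomials, and \eqref{E:L-Four-def} gives $\Lop f = \sum_{n,m\ge 0} \Lop_{n,m} f_{n,m}$. Because $\Lop$ maps $(n,m)$-monomials to $(n,m)$-monomials (as noted after \eqref{E:mono-bndry}), the pieces do not mix, so it suffices to check $\Lop_{n,m}^2 = \Lop_{n,m}$ on each summand. But this is precisely \eqref{E:lmn2} from the proof of Proposition \ref{P:piece-norm}, which followed from the normalization \eqref{E:kt1}. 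Thus $\Lop^2 = \Lop$ follows immediately once boundedness lets us manipulate the series termwise.

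Next I would identify the range. By Corollary \ref{C:poly-rep}, $\Lop$ reproduces holomorphic polynomials from their boundary values, so every such boundary value lies in the range and is fixed by $\Lop$; passing to the $L^2$-closure and using boundedness, $\Lop$ fixes all of $H^2(\bndry D,\mu)$, giving $H^2(\bndry D,\mu) \subseteq \operatorname{range}\Lop$. For the reverse inclusion, Corollary \ref{C:L-mono} shows that for any $(n,m)$-monomial $f$, the image $\Lop f$ is (a constant multiple of) $w_1^n w_2^m$ when $n,m\ge 0$ and is zero otherwise; hence the image of each Fourier piece is the boundary value of a holomorphic monomial, which lies in $H^2(\bndry D,\mu)$. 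Summing over the pieces and invoking the continuity of $\Lop$ together with the closedness of $H^2(\bndry D,\mu)$, we conclude $\operatorname{range}\Lop \subseteq H^2(\bndry D,\mu)$. Combining the two inclusions shows $\Lop$ is a projection onto $H^2(\bndry D,\mu)$.

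I expect the only delicate point to be the bookkeeping that justifies passing between the full operator and its Fourier pieces: one must use the boundedness hypothesis to guarantee that the series $\sum_{n,m\ge0}\Lop_{n,m}f_{n,m}$ converges in $L^2(\bndry D,\mu)$ and that $\Lop$ acts continuously, so that termwise identities (idempotence, membership in the closed subspace $H^2$) lift to the corresponding statements for $\Lop$ itself. This is genuinely routine here because the pieces are mutually orthogonal and Theorem \ref{T:Lnrm} already packages the norm control, so no real obstacle arises; the proof is essentially an assembly of the cited facts rather than a new computation.
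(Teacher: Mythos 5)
Your proof is correct and follows essentially the same route as the paper, which simply cites Corollaries \ref{C:L-mono} and \ref{C:poly-rep} together with Proposition \ref{P:piece-norm} (whose proof contains the idempotence identity \eqref{E:lmn2}) and leaves the assembly implicit. You have merely written out the Fourier-piece bookkeeping that the paper treats as routine.
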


When $\Lop$  defines a bounded operator on $L^2(\bndry D,\mu)$ 
then $\Lop$ admits an adjoint  $\Lop^*_\mu$.

From \eqref{E:L-Four-def} we have $\Lop^*_\mu=\sum\limits_{n,m\ge0} \left(\Lop_{n,m}\right)^*_{\mu}$. From \eqref{E:kaptau} we see that $\left(\Lop_{n,m}\right)^*_{\mu}$ maps 
$L^2_{n,m}(\bndry D,\mu)$ to $L^2_{n,m}(\bndry D,\mu)$ via the formula
\begin{equation}\label{E:kaptau*}
\left(\Lop_{n,m}\right)^*_{\mu}(f)=\langle f,\tau_{n,m}\rangle\,\kappa_{n,m}.
\end{equation}
Of course, the norms of $\left(\Lop_{n,m}\right)^*_{\mu}$ and $\Lop^*$ match those of $\Lop_{n,m}$ and $\Lop$.

\begin{Prop}\label{P:l*l} The self-adjoint operator $\left(\Lop_{n,m}\right)^*_{\mu} \Lop_{n,m}$ has rank one with
spectrum given by 
\begin{equation*}
\{0, \|\Lop_{n,m}\|^2_\mu\}.
\end{equation*}
\end{Prop}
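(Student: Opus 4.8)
The plan is to compute the composition $\left(\Lop_{n,m}\right)^*_{\mu} \Lop_{n,m}$ explicitly from the rank-one representations already recorded in the proof of Proposition \ref{P:piece-norm}, and then read off its spectrum from the elementary spectral theory of a positive rank-one operator. Both $\Lop_{n,m}$ and its adjoint act on the space $L^2_{n,m}(\bndry D,\mu)$, which is infinite-dimensional, so I expect the spectrum to consist of a single nonzero eigenvalue together with $0$ (the latter occurring with infinite multiplicity on the orthogonal complement of the relevant vector).

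First I would substitute \eqref{E:kaptau} into \eqref{E:kaptau*}. Using linearity of $\left(\Lop_{n,m}\right)^*_{\mu}$, and then applying \eqref{E:kaptau*} to $\tau_{n,m}$ to get $\left(\Lop_{n,m}\right)^*_{\mu}(\tau_{n,m})=\|\tau_{n,m}\|^2_\mu\,\kappa_{n,m}$, I obtain for every $f\in L^2_{n,m}(\bndry D,\mu)$
\begin{equation*}
\left(\Lop_{n,m}\right)^*_{\mu} \Lop_{n,m}(f)
= \langle f,\kappa_{n,m}\rangle\,\left(\Lop_{n,m}\right)^*_{\mu}(\tau_{n,m})
= \|\tau_{n,m}\|^2_\mu\,\langle f,\kappa_{n,m}\rangle\,\kappa_{n,m}.
\end{equation*}
This exhibits $\left(\Lop_{n,m}\right)^*_{\mu} \Lop_{n,m}$ as the positive rank-one operator $f\mapsto c\,\langle f,\kappa_{n,m}\rangle\,\kappa_{n,m}$ with $c=\|\tau_{n,m}\|^2_\mu\ge 0$, confirming both that it has rank one and that it is self-adjoint (as it must be, being of the form $T^*T$).

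Next I would identify the spectrum. Any $f$ in the orthogonal complement of $\Span(\kappa_{n,m})$ satisfies $\langle f,\kappa_{n,m}\rangle=0$ and hence lies in the kernel, producing the eigenvalue $0$; evaluating instead on $\kappa_{n,m}$ gives
\begin{equation*}
\left(\Lop_{n,m}\right)^*_{\mu} \Lop_{n,m}(\kappa_{n,m})
= \|\tau_{n,m}\|^2_\mu\,\|\kappa_{n,m}\|^2_\mu\,\kappa_{n,m},
\end{equation*}
so the single nonzero eigenvalue is $\|\tau_{n,m}\|^2_\mu\,\|\kappa_{n,m}\|^2_\mu$. By \eqref{E:lkt} this equals $\|\Lop_{n,m}\|^2_\mu$, yielding the asserted spectrum $\{0,\|\Lop_{n,m}\|^2_\mu\}$.

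As for difficulty, there is essentially no obstacle: the entire content is packaged in the rank-one formulas \eqref{E:kaptau} and \eqref{E:kaptau*} together with the norm identity \eqref{E:lkt}. The only point requiring a moment's care is that, since the ambient space $L^2_{n,m}(\bndry D,\mu)$ is infinite-dimensional, the value $0$ genuinely belongs to the spectrum (as the eigenvalue on the codimension-one kernel). Alternatively, one could bypass the direct eigenvalue computation and invoke the $C^*$-identity $\|\left(\Lop_{n,m}\right)^*_{\mu}\Lop_{n,m}\|_\mu=\|\Lop_{n,m}\|^2_\mu$ together with the fact that a positive rank-one operator has operator norm equal to its unique nonzero eigenvalue.
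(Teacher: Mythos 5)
Your proof is correct and takes essentially the same route as the paper's: both compute $\left(\Lop_{n,m}\right)^*_{\mu}\Lop_{n,m}(f)=\|\tau_{n,m}\|^2_\mu\,\langle f,\kappa_{n,m}\rangle\,\kappa_{n,m}$ directly from \eqref{E:kaptau} and \eqref{E:kaptau*}, and then identify the unique nonzero eigenvalue as $\|\kappa_{n,m}\|^2_\mu\,\|\tau_{n,m}\|^2_\mu=\|\Lop_{n,m}\|^2_\mu$ via \eqref{E:lkt}. The paper merely phrases the same conclusion as ``$\|\Lop_{n,m}\|^2_\mu$ times the orthogonal projection onto the line through $\kappa_{n,m}$,'' which is exactly your eigenvalue computation.
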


\begin{proof}
From \eqref{E:kaptau}, \eqref{E:kaptau*} and \eqref{E:kt1} we have
\begin{equation*}
\left(\Lop_{n,m}\right)^*_{\mu} \Lop_{n,m} f
=\|\tau_{n,m}\|_\mu^2 \langle f, \kappa_{n,m}\rangle \kappa_{n,m}
\end{equation*}
which is $\|\tau_{n,m}\|_\mu^2 \|\kappa_{n,m}\|_\mu^2=\|\Lop_{n,m}\|_\mu^2$ times the orthogonal projection onto the line through $\kappa_{n,m}$.  
\end{proof}

\begin{Remark}
It is clear that $\left(\Lop_{n,m}\right)^*_{\mu} \Lop_{n,m}$ admits an orthonormal basis of eigenfunctions.
\end{Remark}

\begin{Cor} If $\mu$ is admissible, then 
$\left(\Lop_{n,m}\right)^*_{\mu} \Lop_{n,m}$ is unitarily equivalent to $ \Lop_{n,m}\left(\Lop_{n,m}\right)^*_{\mu}$ in $L^2(bD, \mu)$. Moreover, if \  $\Lop$ is bounded in $L^2(bD, \mu)$, then $\left(\Lop\right)^*_{\mu} \Lop$ is unitarily equivalent to $ \Lop\left(\Lop\right)^*_{\mu}$.
\end{Cor}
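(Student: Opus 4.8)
The plan is to handle the two assertions in turn, reducing the second (global) statement to the first (rank-one) statement by means of the Fourier decomposition recorded around \eqref{E:L-Four-def}.

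For the first assertion I would begin from the explicit formulas \eqref{E:kaptau} and \eqref{E:kaptau*}. The proof of Proposition \ref{P:l*l} already displays $\left(\Lop_{n,m}\right)^*_{\mu}\Lop_{n,m}$ as $\|\Lop_{n,m}\|^2_\mu$ times the orthogonal projection onto the line through $\kappa_{n,m}$. Running the symmetric computation for the opposite product, and using $\Lop_{n,m}\kappa_{n,m}=\langle\kappa_{n,m},\kappa_{n,m}\rangle\,\tau_{n,m}=\|\kappa_{n,m}\|^2_\mu\,\tau_{n,m}$ together with \eqref{E:lkt}, I would obtain that $\Lop_{n,m}\left(\Lop_{n,m}\right)^*_{\mu}$ equals the \emph{same} positive scalar $\|\Lop_{n,m}\|^2_\mu$ times the orthogonal projection onto the line through $\tau_{n,m}$. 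Thus both operators are a common positive multiple of a rank-one orthogonal projection acting on the same space $L^2_{n,m}(\bndry D,\mu)$.

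The key elementary fact is then that any two rank-one orthogonal projections on a fixed Hilbert space are unitarily equivalent: since their ranges have equal dimension $1$ and their kernels have equal (infinite) dimension, one may choose a unitary $U_{n,m}$ of $L^2_{n,m}(\bndry D,\mu)$ carrying the unit vector $\kappa_{n,m}/\|\kappa_{n,m}\|_\mu$ to $\tau_{n,m}/\|\tau_{n,m}\|_\mu$ and the orthogonal complement of the one line onto that of the other. Such a $U_{n,m}$ conjugates one projection to the other and hence conjugates $\left(\Lop_{n,m}\right)^*_{\mu}\Lop_{n,m}$ to $\Lop_{n,m}\left(\Lop_{n,m}\right)^*_{\mu}$, proving the first assertion.

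For the second assertion I would assemble these pieces into a single unitary. By \eqref{E:L-Four-def} the operator $\Lop$ preserves each Fourier piece $L^2_{n,m}(\bndry D,\mu)$, acting as $\Lop_{n,m}$ when $n,m\ge0$ and as $0$ otherwise; consequently $\Lop^*_\mu$, and the products $\Lop^*_\mu\Lop$ and $\Lop\Lop^*_\mu$, are block-diagonal with respect to this decomposition, acting as $\left(\Lop_{n,m}\right)^*_{\mu}\Lop_{n,m}$ and $\Lop_{n,m}\left(\Lop_{n,m}\right)^*_{\mu}$ on the piece $(n,m)$ with $n,m\ge0$ and as $0$ on every other piece. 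I would then set $U=\bigoplus_{n,m\ge0}U_{n,m}$, taking $U$ to be the identity on the pieces with $\min\{n,m\}<0$; since each $U_{n,m}$ is unitary and the pieces are mutually orthogonal and span $L^2(\bndry D,\mu)$, the direct sum $U$ is unitary on the whole space and intertwines $\Lop^*_\mu\Lop$ with $\Lop\Lop^*_\mu$ block by block. I do not anticipate a genuine obstacle: the content is the rank-one identity and the bookkeeping of Fourier pieces, and the only points deserving care are that the direct sum $U$ is automatically unitary (each summand has norm one) and that $\Lop$ being merely an idempotent projection onto $H^2$, as in Proposition \ref{P:Hardy}, enters only through the block-diagonal structure already recorded.
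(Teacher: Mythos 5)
Your proof is correct and follows essentially the route the paper intends: the Corollary is stated as an immediate consequence of Proposition \ref{P:l*l} and the formulas \eqref{E:kaptau}, \eqref{E:kaptau*}, \eqref{E:lkt}, and you flesh out exactly that argument --- both products are the common scalar $\|\Lop_{n,m}\|_\mu^2$ times rank-one orthogonal projections (onto the $\kappa_{n,m}$ and $\tau_{n,m}$ lines respectively), any two such projections on the same Hilbert space are unitarily conjugate, and the global statement follows by summing the blockwise unitaries over the Fourier decomposition \eqref{E:L-Four-def}.
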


Turning the attention to  essential norms and spectra, we 
recall that the essential norm of an operator $T$ on a Hilbert space $\mathcal H$ is the distance (in the operator norm) of $T$ from the space of compact operators $\mathcal K(\mathcal H)$ (see \cite{Pel}, page 25) while the essential spectrum
 of a bounded operator $T\in \mathcal L(H)$  is the spectrum of the projection of $T$ on the Calkin algebra $\mathcal L(H)/\mathcal K(H)$ (see \cite{HR}, page 32), and we have 
 
\begin{Prop}[\cite{HR}, Proposition 2.2.2]\label{R:ess-spec}

For a self-adjoint or anti-self-adjoint operator admitting an orthonormal basis of eigenfunctions,  the essential spectrum consists of limits of sequences of eigenvalues together with isolated eigenvalues of infinite multiplicity.
\end{Prop}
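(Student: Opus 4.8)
The plan is to exploit directly the diagonal structure provided by the orthonormal eigenbasis, working in the Calkin algebra exactly as in the definition of essential spectrum recalled above. Let $T$ be the given operator (self-adjoint or anti-self-adjoint, hence in either case bounded and normal) on $\mathcal H$, with orthonormal basis $\{e_k\}$ satisfying $Te_k=\lambda_k e_k$. Relative to this basis $T$ is the diagonal operator $\mathrm{diag}(\lambda_k)$, and for each $\lambda$ the operator $T-\lambda$ is $\mathrm{diag}(\lambda_k-\lambda)$. By definition $\lambda\notin\sigma_{\mathrm{ess}}(T)$ exactly when $T-\lambda$ is invertible modulo $\mathcal K(\mathcal H)$, so the entire statement reduces to a clean criterion for when a diagonal operator is invertible in the Calkin algebra.

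The key lemma I would isolate is: a bounded diagonal operator $\mathrm{diag}(\mu_k)$ is invertible modulo $\mathcal K(\mathcal H)$ if and only if there exists $\delta>0$ such that $\{k:|\mu_k|<\delta\}$ is finite. For the ``if'' direction I would exhibit an explicit diagonal parametrix $\mathrm{diag}(\nu_k)$, taking $\nu_k=\mu_k^{-1}$ off the finite exceptional set and $\nu_k=0$ on it; then $\mathrm{diag}(\nu_k)\mathrm{diag}(\mu_k)-I$ is a finite-rank, hence compact, projection. For the ``only if'' direction I would argue by contradiction: if for every $\delta$ infinitely many $|\mu_k|$ are smaller than $\delta$, pick basis vectors $e_{k_j}$ along which $\mu_{k_j}\to 0$; any left parametrix $S$ with $S\,\mathrm{diag}(\mu_k)=I-K$, $K$ compact, would force $(I-K)e_{k_j}\to 0$, but $e_{k_j}\rightharpoonup 0$ weakly and $K$ compact give $Ke_{k_j}\to 0$ in norm, whence $e_{k_j}\to 0$, contradicting $\|e_{k_j}\|=1$.

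Applying the lemma with $\mu_k=\lambda_k-\lambda$ shows that $\lambda\in\sigma_{\mathrm{ess}}(T)$ precisely when every neighborhood of $\lambda$ contains infinitely many eigenvalues counted with multiplicity. The final step is to check that this last condition describes exactly the set in the statement: it holds either because infinitely many \emph{distinct} eigenvalues accumulate at $\lambda$ (so $\lambda$ is a limit of a sequence of eigenvalues), or because $\lambda$ itself is an eigenvalue occurring infinitely often while being isolated from the rest of the spectrum (an isolated eigenvalue of infinite multiplicity). I expect the only genuinely delicate point to be this last bookkeeping: one must read ``limit of a sequence of eigenvalues'' as referring to sequences of \emph{pairwise distinct} eigenvalues, since otherwise every eigenvalue would trivially qualify via a constant sequence. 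The spectral-algebra lemma itself is routine, so no serious obstacle arises; and since the diagonal computation never uses reality (or purely imaginary values) of the $\lambda_k$, it handles the self-adjoint and anti-self-adjoint cases simultaneously.
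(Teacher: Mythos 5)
Your proposal is correct, and it is worth noting that the paper contains no proof of this statement at all: it is imported verbatim from \cite{HR}, Proposition 2.2.2, so what you have written is a self-contained substitute rather than a parallel to an internal argument. Your key lemma --- that a bounded diagonal operator $\mathrm{diag}(\mu_k)$ is invertible in the Calkin algebra if and only if the $\mu_k$ stay bounded away from $0$ off a finite index set --- is proved soundly in both directions (the explicit parametrix with finite-rank defect one way; the combination of $e_{k_j}\rightharpoonup 0$, compactness of $K$, and $\|(I-K)e_{k_j}\|\le|\mu_{k_j}|\,\|S\|$ the other way), and applying it to $\mu_k=\lambda_k-\lambda$ yields exactly the criterion ``every neighborhood of $\lambda$ contains $\lambda_k$ for infinitely many indices $k$.'' Your closing bookkeeping is also the right one: reading ``limits of sequences of eigenvalues'' as limits of sequences of \emph{pairwise distinct} eigenvalues is what makes the statement true as written, and the isolated infinite-multiplicity eigenvalues are precisely what must be adjoined, since a non-isolated eigenvalue of infinite multiplicity is already a limit of distinct eigenvalues. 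Compared with the usual textbook route behind the cited reference (spectral projections $\chi_{(\lambda-\epsilon,\lambda+\epsilon)}(T)$ of infinite rank, or Weyl singular sequences, both resting on the spectral theorem), your argument is more elementary and strictly more general: it never uses self-adjointness or anti-self-adjointness of $T$, only boundedness (implicit in the paper's Calkin-algebra definition of the essential spectrum, and in any case guaranteed for an everywhere-defined symmetric operator by Hellinger--Toeplitz) together with the existence of an orthonormal eigenbasis --- which is exactly the generality in which the paper applies the proposition to the rank-one pieces $\Lop_{n,m}$ and $\left(\Aop_{n,m}\right)_\mu$.
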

 In general, the essential spectrum includes the continuous spectrum, which is absent in our work but does appear in analysis of the Kerzman-Stein operator for many non-smooth planar domains (see \cite{Bol1}).
\begin{Prop}[see \cite{CM}, \S 3.2; {[{\"O}T]}, \S 2]\label{R:ess-norm}
 The essential norm of an operator $T$ is the 
square root of the largest value in the essential spectrum of $T^* T$. 
\end{Prop}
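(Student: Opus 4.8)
The plan is to identify the essential norm with the norm of the image of $T$ in the Calkin algebra and then to exploit the $C^*$-structure of that algebra. Write $\pi\colon \mathcal L(H)\to \mathcal L(H)/\mathcal K(H)$ for the quotient map, and write $\|T\|_{\mathrm{ess}}$ for the essential norm. By definition the quotient norm is $\|\pi(T)\|=\inf\{\|T-K\|\st K\in\mathcal K(H)\}$, which is precisely the distance of $T$ from the compact operators; thus $\|T\|_{\mathrm{ess}}=\|\pi(T)\|$. The first step is therefore to recall that $\mathcal K(H)$ is a closed two-sided $*$-ideal of the $C^*$-algebra $\mathcal L(H)$, so that the Calkin algebra $\mathcal L(H)/\mathcal K(H)$ is itself a $C^*$-algebra and $\pi$ is a $*$-homomorphism.

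Second, I would invoke the $C^*$-identity in the Calkin algebra. For any element $a$ of a $C^*$-algebra one has $\|a\|^2=\|a^*a\|$; applying this to $a=\pi(T)$ and using that $\pi$ preserves adjoints gives
\[
\|T\|_{\mathrm{ess}}^2=\|\pi(T)\|^2=\|\pi(T)^*\pi(T)\|=\|\pi(T^*T)\|.
\]

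Third, I would evaluate the right-hand side through the spectral radius. The element $\pi(T^*T)=\pi(T)^*\pi(T)$ is of the form $a^*a$ and hence positive in the Calkin algebra, so its spectrum is a compact subset of $[0,\infty)$ and the norm of a positive (in particular self-adjoint) element of a $C^*$-algebra equals its spectral radius, i.e.\ the largest point of its spectrum. But the spectrum of $\pi(T^*T)$ in the Calkin algebra is by definition the essential spectrum of $T^*T$. Hence $\|\pi(T^*T)\|=\max\bigl(\text{ess.\ spec}(T^*T)\bigr)$, and combining this with the displayed identity and taking square roots yields the assertion.

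The only genuinely delicate point is the passage from norm to spectral radius, which rests on two facts drawn from the continuous functional calculus: that the norm of a self-adjoint element of a $C^*$-algebra equals its spectral radius, and that the positivity of $T^*T$ forces its essential spectrum into $[0,\infty)$, so that the maximum of the spectrum (rather than the maximum modulus) is the relevant quantity and the supremum is actually attained. Both are standard, so no serious obstacle arises; once the essential norm is recognized as the Calkin-algebra norm, the argument is purely an application of abstract $C^*$-algebra theory.
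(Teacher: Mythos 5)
Your proof is correct. The paper does not prove this proposition at all---it is quoted from the literature (Cowen--MacCluer \S 3.2 and \"Ozbay--Tannenbaum \S 2)---so there is no in-paper argument to compare against; what you have written is the standard proof those references rely on, and it meshes exactly with the paper's own definitions, since the paper defines the essential norm as the distance to $\mathcal K(\mathcal H)$ (i.e.\ the Calkin quotient norm) and the essential spectrum as the spectrum of the image in the Calkin algebra. Your three steps---quotient norm equals Calkin norm, the $C^*$-identity $\|\pi(T)\|^2=\|\pi(T^*T)\|$, and norm equals maximum of spectrum for the positive element $\pi(T)^*\pi(T)$---are each standard and correctly invoked, so the argument is complete.
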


Using Propositions \ref {P:l*l}, \ref{R:ess-spec} and \ref{R:ess-norm} we obtain the following.

\begin{Thm}\label{T:Less1}
The essential norm of $\Lop$  on $L^2(\bndry D,\mu)$ is given by\begin{equation*}
\limsup\, \{\|\Lop_{n,m}\|_\mu\st {n,m\ge 0} \},
\end{equation*}
where  $\limsup q_{n,m}$ is defined by
\begin{equation*} 
\inf\,
\bigg\{
\sup\,
\big\{
q_{n,m}\in(\N\times\N)\setminus F
\big\}
 \st 
 F^{\text{finite}}\subset \N\times\N
\bigg\}.
\end{equation*}
The essential spectrum of $\Lop_\mu^*\Lop$ consists of $0$ together with all values of 
\begin{equation*}
\lim\limits_{j\to\infty} \|\Lop_{n_j,m_j}\|^2_\mu
\end{equation*}
taken along sequences $(n_j,m_j)$ with $\max\{n_j,m_j\}
\to\infty$ as $j\to \infty$ along which the above limit  exists.
\end{Thm}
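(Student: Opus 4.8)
The plan is to exploit the fact that $\Lop$, and hence $\Lop^*_\mu\Lop$, acts diagonally with respect to the orthogonal decomposition $L^2(\bndry D,\mu)=\bigoplus_{n,m\in\Z}L^2_{n,m}(\bndry D,\mu)$. On each summand with $n,m\ge 0$, Proposition \ref{P:l*l} tells us that $\Lop^*_\mu\Lop$ restricts to the rank-one operator $\left(\Lop_{n,m}\right)^*_\mu\Lop_{n,m}$, whose spectrum is $\{0,\|\Lop_{n,m}\|^2_\mu\}$, while on every summand with $\min\{n,m\}<0$ the restriction is the zero operator. Consequently $\Lop^*_\mu\Lop$ is self-adjoint and admits an orthonormal basis of eigenfunctions, and its eigenvalues are precisely $0$ together with the numbers $\|\Lop_{n,m}\|^2_\mu$ for $n,m\ge 0$. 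This places us in a position to invoke Proposition \ref{R:ess-spec}.

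Next I would compute the essential spectrum. By Proposition \ref{R:ess-spec} it consists of the limits of sequences of eigenvalues together with the isolated eigenvalues of infinite multiplicity. The value $0$ always qualifies: it is an eigenvalue of infinite multiplicity, occurring on every summand with $\min\{n,m\}<0$ as well as on the orthogonal complement of the line $\C\kappa_{n,m}$ inside each summand with $n,m\ge 0$. The remaining eigenvalues are the $\|\Lop_{n,m}\|^2_\mu$ with $n,m\ge 0$. The key bookkeeping observation is that for each $N$ there are only finitely many pairs $(n,m)$ with $\max\{n,m\}\le N$; hence a value $\lambda>0$ is a limit point of the eigenvalues or an eigenvalue of infinite multiplicity if and only if infinitely many pairs $(n,m)$ produce values $\|\Lop_{n,m}\|^2_\mu$ arbitrarily close to $\lambda$, and by this finiteness such clustering forces a sequence $(n_j,m_j)$ with $\max\{n_j,m_j\}\to\infty$ and $\|\Lop_{n_j,m_j}\|^2_\mu\to\lambda$. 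This collapses the two alternatives of Proposition \ref{R:ess-spec} into the single escaping-sequence condition of the statement, yielding the claimed description of the essential spectrum of $\Lop^*_\mu\Lop$.

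Finally, the essential-norm formula follows from Proposition \ref{R:ess-norm}, which identifies the essential norm of $\Lop$ as the square root of the largest element of the essential spectrum of $\Lop^*_\mu\Lop$. Unwinding the definition of $\limsup\{\|\Lop_{n,m}\|_\mu\}$ as $\inf_F\sup_{(n,m)\notin F}\|\Lop_{n,m}\|_\mu$ over finite $F\subset\N\times\N$, one checks that this quantity is exactly the largest accumulation value of $\|\Lop_{n,m}\|_\mu$ along pairs leaving every finite set, i.e. the square root of the largest escaping-sequence limit of $\|\Lop_{n,m}\|^2_\mu$. Since the always-present value $0$ is no larger than this accumulation value, taking the maximum against it changes nothing, so the largest element of the essential spectrum equals $\bigl(\limsup\|\Lop_{n,m}\|_\mu\bigr)^2$ and the essential norm is $\limsup\|\Lop_{n,m}\|_\mu$, as asserted.

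The main obstacle I anticipate is the multiplicity bookkeeping in the second step: one must treat an isolated eigenvalue of infinite multiplicity and a genuine limit point of distinct eigenvalues uniformly, and verify that the finiteness of $\{(n,m):\max\{n,m\}\le N\}$ indeed reduces both cases to the single condition $\max\{n_j,m_j\}\to\infty$. Some care is likewise needed to confirm that $0$ lies in the essential spectrum independently of whether it happens to arise as such a limit, owing to the infinite-dimensional kernel contributed both by the summands with $\min\{n,m\}<0$ and by the complements of the lines $\C\kappa_{n,m}$.
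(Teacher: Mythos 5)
Your proposal is correct and takes essentially the same approach as the paper, whose proof of this theorem is precisely the assembly you describe: decompose $L^2(\bndry D,\mu)$ into the pieces $L^2_{n,m}(\bndry D,\mu)$, apply Proposition \ref{P:l*l} on each piece, and invoke Propositions \ref{R:ess-spec} and \ref{R:ess-norm}. The bookkeeping you flesh out --- the finiteness of $\{(n,m)\st\max\{n,m\}\le N\}$ forcing escaping sequences, and the infinite-multiplicity kernel placing $0$ in the essential spectrum --- is exactly what the paper leaves implicit.
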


As in the introduction we set \[\Aop_\mu=\Lop^*_\mu-\Lop\] and \[\left(\Aop_{n,m}\right)_\mu=\left(\Lop_{n,m}\right)^*_\mu-\Lop_{n,m}.\]
These operators are anti-self-adjoint.

\begin{Prop}\label{P:norm-Anm} 
$\left(\Aop_{n,m}\right)_\mu$ is a rank-two operator with norm given by
\[\|\left(\Aop_{n,m}\right)_\mu\|_\mu^2=
\|\Lop_{n,m}\|_\mu^2-1.\]
The spectrum  of $\left(\Aop_{n,m}\right)_\mu$ is the set
\begin{equation*}
\{0,\pm i\sqrt{\|\Lop_{n,m}\|_\mu^2-1}\}.
\end{equation*}
\end{Prop}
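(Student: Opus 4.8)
The plan is to exploit the explicit rank-one formulas \eqref{E:kaptau} and \eqref{E:kaptau*}, which combine to give, for $f\in L^2_{n,m}(\bndry D,\mu)$,
\[
\left(\Aop_{n,m}\right)_\mu f = \langle f,\tau_{n,m}\rangle\,\kappa_{n,m}-\langle f,\kappa_{n,m}\rangle\,\tau_{n,m}.
\]
Two observations follow at once: the range sits inside $V\eqdef\Span\{\tau_{n,m},\kappa_{n,m}\}$, so the rank is at most two; and every $f$ orthogonal to both $\tau_{n,m}$ and $\kappa_{n,m}$ is killed, so $V^\perp\subset\ker\left(\Aop_{n,m}\right)_\mu$. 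Since $V$ is clearly invariant, the whole analysis reduces to understanding the restriction of the operator to the (at most two-dimensional) space $V$, the operator being zero on $V^\perp$.

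First I would record the scalars that govern the restriction: $\langle\tau_{n,m},\kappa_{n,m}\rangle=1$ from \eqref{E:kt1}, and $\|\tau_{n,m}\|^2_\mu\,\|\kappa_{n,m}\|^2_\mu=\|\Lop_{n,m}\|^2_\mu$ from \eqref{E:lkt}. Abbreviating $a=\|\tau_{n,m}\|^2_\mu$ and $b=\|\kappa_{n,m}\|^2_\mu$ (so that $ab=\|\Lop_{n,m}\|^2_\mu$), a short computation gives $\left(\Aop_{n,m}\right)_\mu\tau_{n,m}=a\,\kappa_{n,m}-\tau_{n,m}$ and $\left(\Aop_{n,m}\right)_\mu\kappa_{n,m}=\kappa_{n,m}-b\,\tau_{n,m}$, so that in the (not necessarily orthonormal) basis $\{\tau_{n,m},\kappa_{n,m}\}$ the restriction has matrix
\[
\begin{pmatrix}-1&-b\\ a&1\end{pmatrix},
\]
with trace $0$ and determinant $ab-1=\|\Lop_{n,m}\|^2_\mu-1$. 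Its characteristic polynomial is therefore $\lambda^2+(\|\Lop_{n,m}\|^2_\mu-1)$; since Cauchy--Schwarz applied to $\langle\tau_{n,m},\kappa_{n,m}\rangle=1$ forces $ab\ge1$, the eigenvalues are the purely imaginary numbers $\pm i\sqrt{\|\Lop_{n,m}\|^2_\mu-1}$. Adjoining the eigenvalue $0$ contributed by $V^\perp$ gives the claimed spectrum.

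It then remains to extract the rank and the norm. For the norm I would use that $\left(\Aop_{n,m}\right)_\mu$ is anti-self-adjoint --- indeed $\left((\Lop_{n,m})^*_\mu-\Lop_{n,m}\right)^*=-\left((\Lop_{n,m})^*_\mu-\Lop_{n,m}\right)$ --- hence normal, so the operator norm equals the spectral radius $\sqrt{\|\Lop_{n,m}\|^2_\mu-1}$; equivalently one may read off the top eigenvalue of $\left(\Aop_{n,m}\right)^*_\mu\left(\Aop_{n,m}\right)_\mu=-\left(\Aop_{n,m}\right)^2_\mu$. For the rank, the nonzero determinant $ab-1$ makes the restriction to $V$ invertible, so the range is all of $V$, which is genuinely two-dimensional exactly when $\tau_{n,m}$ and $\kappa_{n,m}$ are linearly independent.

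The one genuinely delicate point --- which I would flag explicitly rather than sweep aside --- is the equality case $\|\Lop_{n,m}\|_\mu=1$ of Cauchy--Schwarz: there $\tau_{n,m}$ and $\kappa_{n,m}$ are parallel, $V$ collapses to a line, and one checks directly that $\left(\Aop_{n,m}\right)_\mu=0$, so the rank-two assertion degenerates to rank zero while both displayed formulas (norm $0$, spectrum $\{0\}$) persist. Otherwise the argument is entirely formal, its substance being the two-dimensional reduction carried out in the first paragraph.
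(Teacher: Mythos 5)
Your proof is correct, including the degenerate case, but it runs the computation in a different order than the paper. Both arguments reduce matters to the two-dimensional invariant subspace $V=\Span\{\kappa_{n,m},\tau_{n,m}\}$, with $\left(\Aop_{n,m}\right)_\mu$ vanishing on $V^\perp$; the difference is in how the norm and spectrum are extracted on $V$. The paper first orthogonalizes, setting $\lam=\tau_{n,m}-\kappa_{n,m}/\|\kappa_{n,m}\|_\mu^2$, rewrites the operator as $f\mapsto\langle f,\lam\rangle\kappa_{n,m}-\langle f,\kappa_{n,m}\rangle\lam$ with $\lam\perp\kappa_{n,m}$, and computes the norm \emph{directly} via Bessel's inequality (also identifying that the norm is attained exactly on $V$); the eigenvalue statement then comes as a short afterthought. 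You instead keep the non-orthogonal basis $\{\tau_{n,m},\kappa_{n,m}\}$, read off the $2\times2$ matrix, and get the spectrum first from trace zero and determinant $\|\Lop_{n,m}\|_\mu^2-1$ (a computation the paper essentially leaves implicit when it asserts the eigenvalues); you then recover the norm from the spectral radius, using anti-self-adjointness, hence normality, of $\left(\Aop_{n,m}\right)_\mu$. What each route buys: the paper's Gram--Schmidt/Bessel argument is self-contained and pins down the maximizing vectors, at the cost of introducing $\lam$ and splitting into the cases $\lam=0$, $\lam\ne0$; your argument makes the spectrum computation fully explicit and gets the norm for free, at the cost of invoking the (standard, but external) fact that a bounded normal operator has norm equal to its spectral radius -- a fact one must be careful to invoke only because a matrix in a non-orthonormal basis does not directly display the operator norm, as you implicitly recognize. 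Your explicit flagging of the case $\|\Lop_{n,m}\|_\mu=1$, where $\tau_{n,m}$ and $\kappa_{n,m}$ are parallel and the operator is zero rather than rank two, corresponds precisely to the paper's case $\lam=0$ and is if anything stated more honestly than in the proposition itself, whose ``rank-two'' claim degenerates there.
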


\begin{proof}
Using the notation from \eqref {E:kappa-tau}
 and the identities \eqref{E:kaptau}, \eqref{E:kt1} (but dropping subscripts),
set 
\begin{equation*}\label{E:lambda}
\lam=\tau-\dfrac{\langle\tau,\kappa\rangle}{\|\kappa\|_\mu^2}\kappa
=\tau-\dfrac{\kappa}{\|\kappa\|_\mu^2}.
\end{equation*}
 Then $\lam\perp\kappa$ and $\|\lam\|_\mu^2=\|\tau\|_\mu^2-\dfrac{|\langle\kappa,\tau\rangle|^2}{\|\kappa\|_\mu^2}
 =\|\tau\|_\mu^2-\dfrac{1}{\|\kappa\|_\mu^2}$.

Using \eqref{E:kaptau} and \eqref{E:kaptau*} we have
\begin{align}\label{E:A-kap-tau}
\left(\Aop_{n,m}\right)_\mu (f)
&=
\langle f,\tau\rangle\kappa - \langle f,\kappa\rangle\tau\notag\\
&=
\langle f,\lam\rangle\kappa - \langle f,\kappa\rangle\lam
\end{align}
so 
\begin{align}\label{E: Anm-nrm}
\|\left(\Aop_{n,m}\right)_\mu (f)\|_\mu^2
&= |\langle f,\lam\rangle|^2\|\kappa\|_\mu^2
+ |\langle f,\kappa\rangle|^2\|\lam\|_\mu^2.
\end{align}

If $\lambda =0$ then it follows $\Aop_{n,m} =0$ and also $\tau = \kappa /||\kappa||^2_{\mu}$, so \eqref{E:lkt} shows $||\Lop||=1$, which proves the desired result.

If on the other hand $\lambda\neq 0$ then we may write
 \eqref{E: Anm-nrm} as
\begin{align*}
\|\left(\Aop_{n,m}\right)_\mu (f)\|_\mu^2
&=
\left(
\left|\left\langle f,\frac{\lam}{\|\lam\|_\mu} \right\rangle\right|^2
+
\left|\left\langle f,\frac{\kappa}{\|\kappa\|_\mu} \right\rangle\right|^2
\right)
\|\kappa\|_\mu^2\|\lam\|_\mu^2.
\end{align*}

By Bessel's inequality this is less than or equal to
\begin{equation*}
\|f\|^2\|\kappa\|_\mu^2\|\lam\|_\mu^2
= \|f\|^2\left(\|\kappa\|_\mu^2\|\tau\|_\mu^2-1
\right)
\end{equation*}
with equality holding if and only if $f$ is in $\Span\{\kappa,\tau\}$.  
Thus 
\begin{equation}\label{E:anmkt}
\|\left(\Aop_{n,m}\right)_\mu\|^2=\|\kappa\|_\mu^2\|\tau\|_\mu^2-1.
\end{equation}

 The eigenvalues of $\Aopnm$ on $\Span\{\kappa,\tau\}$ are $\pm i\sqrt{\|\kappa\|_\mu^2\|\tau\|_\mu^2-1}$, and $\Aopnm$ vanishes on $\Span\{\kappa,\tau\}^\perp$.  Thus the spectrum of $\Aopnm$ is 
 \begin{equation*}
\{0,\pm i\sqrt{\|\kappa\|_\mu^2\|\tau\|_\mu^2-1}\}.
\end{equation*}
Invoking \eqref{E:lkt}, the proof is complete.
\end{proof}

\begin{Remark}
It is clear that $\left(\Aop_{n,m}\right)_\mu$ admits an orthonormal basis of eigenfunctions.
\end{Remark}

Assembling the pieces as in Theorems \ref{T:Lnrm} and \ref{T:Less1} 
we have the following.

\begin{Thm}\label{T:Anrm}
The norm  of $\Aop_\mu$ acting on $L^2(\bndry D, \mu)$ is
\begin{equation*}
\sup\left\{ \sqrt{\|\Lop_{n,m}\|_\mu^2-1}\st n,m\ge 0\right\}. 
\end{equation*}
The essential norm of $\Aop_\mu$ is given by
\begin{equation*}
\limsup\left\{ \sqrt{\|\Lop_{n,m}\|_\mu^2-1}\st n,m\ge0\right\}. 
\end{equation*}
In particular,  $\Aop_\mu$ is compact on $L^2(\bndry D, \mu)$
if and only if 
\begin{equation*}
\limsup\left\{ \sqrt{\|\Lop_{n,m}\|_\mu^2-1}\st n,m\ge0\right\}=0. 
\end{equation*}
The spectrum of $\Aop_\mu$ is the closure of
\begin{equation*}
\{0\}\cup\left\{\pm i\sqrt{\|\Lop_{n,m}\|_\mu^2-1}\st n,m\ge0\right\}.
\end{equation*}
The essential spectrum of $\Aop_\mu$ consists of $0$ together with all values of 
\begin{equation*}
\lim\limits_{j\to\infty} \pm i\sqrt{\|\Lop_{n_j,m_j}\|^2_\mu-1}
\end{equation*}
taken along sequences $(n_j,m_j)$ with $\max\{n_j,m_j\}
\to\infty$ as $j\to \infty$ along which the above limit  exists.
\end{Thm}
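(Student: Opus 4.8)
The plan is to read everything off the orthogonal Fourier decomposition $L^2(\bndry D,\mu)=\bigoplus_{n,m\in\Z} L^2_{n,m}(\bndry D,\mu)$, exactly as in the proofs of Theorems \ref{T:Lnrm} and \ref{T:Less1}. With respect to this decomposition $\Aop_\mu$ acts diagonally, restricting to $\left(\Aop_{n,m}\right)_\mu$ on each summand; the summands with $\min\{n,m\}<0$ contribute nothing since $\Lop_{n,m}=0$ there. Hence $\Aop_\mu=\bigoplus_{n,m\ge0}\left(\Aop_{n,m}\right)_\mu$ as an orthogonal direct sum, and Proposition \ref{P:norm-Anm} already supplies the norm and the spectrum $\{0,\pm i\sqrt{\|\Lop_{n,m}\|_\mu^2-1}\}$ of each summand. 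The four assertions are then the standard facts about orthogonal direct sums of operators. First I would record the norm and spectrum: the operator norm of an orthogonal direct sum is the supremum of the norms of its summands, giving $\sup\{\sqrt{\|\Lop_{n,m}\|_\mu^2-1}:n,m\ge0\}$; and the spectrum of a direct sum is the closure of the union of the spectra of the summands, giving the asserted closure.

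Next, for the essential norm I would argue directly that it equals $\ell:=\limsup\{\sqrt{\|\Lop_{n,m}\|_\mu^2-1}:n,m\ge0\}$, precisely as for $\Lop$ in Theorem \ref{T:Less1}. For the upper bound, given $\eps>0$ only finitely many summands have norm exceeding $\ell+\eps$; replacing these by zero is a finite-rank (hence compact) perturbation that lowers the norm to at most $\ell+\eps$. For the lower bound, pick $(n_j,m_j)$ with $\max\{n_j,m_j\}\to\infty$ along which the summand norms approach $\ell$, and take the norming unit vectors $v_j\in\Span\{\kappa_{n_j,m_j},\tau_{n_j,m_j}\}$ produced in the proof of Proposition \ref{P:norm-Anm}. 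These lie in mutually orthogonal summands, so $v_j\to0$ weakly; hence $Kv_j\to0$ for every compact $K$, and $\|(\Aop_\mu-K)v_j\|\to\ell$. This yields the essential-norm formula, and the compactness criterion is the special case $\ell=0$.

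Finally, for the essential spectrum I would invoke Proposition \ref{R:ess-spec}: since $\Aop_\mu$ is anti-self-adjoint with an orthonormal eigenbasis, its essential spectrum is the set of limits of eigenvalue sequences together with the isolated eigenvalues of infinite multiplicity. The value $0$ always has infinite multiplicity (it is an eigenvalue on every summand with $\min\{n,m\}<0$, and on $\Span\{\kappa_{n,m},\tau_{n,m}\}^\perp$ inside each other summand), so $0$ always appears. The subtle point, which I expect to be the main obstacle, is reconciling the ``isolated eigenvalues of infinite multiplicity'' with the stated limits along $\max\{n_j,m_j\}\to\infty$: because only finitely many $(n,m)$ have $\max\{n,m\}$ bounded, any nonzero eigenvalue of infinite multiplicity must repeat along some sequence with $\max\{n_j,m_j\}\to\infty$ and thus already occurs as such a limit, while conversely every such limit is a limit of eigenvalues. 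Making this matching explicit is what closes the gap between the abstract statement of Proposition \ref{R:ess-spec} and the concrete description in the theorem, and it completes the proof.
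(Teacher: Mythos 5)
Your proposal is correct and follows essentially the same route as the paper: the paper's proof is precisely to "assemble the pieces" from the orthogonal decomposition into $(n,m)$-monomial subspaces, using Proposition \ref{P:norm-Anm} for the per-piece norm and spectrum and Proposition \ref{R:ess-spec} for the essential spectrum, exactly as you do. The only cosmetic difference is that you establish the essential-norm formula by hand (finite-rank truncation for the upper bound, weakly null norming vectors for the lower bound) rather than citing Propositions \ref{R:ess-spec} and \ref{R:ess-norm} applied to $\Aop_\mu^*\Aop_\mu$, but this is just an unpacking of the same standard fact.
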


For a geometric interpretation of these results, let $\theta_{n,m}\in[0,\frac{\pi}{2})$ be the angle between $\kappa_{n,m}$ and $\tau_{n,m}$ in $L^2(\bndry D,\mu)$. (Thus $\langle \kappa_{n,m}, \tau_{n,m}\rangle=\cos\theta_{n,m}\cdot\|\kappa_{n,m}\|\cdot\|\tau_{n,m}\|$.) From \eqref{E:kt1}, \eqref{E:lkt} and \eqref{E:anmkt} we find that $\|\Lop_{n,m}\|_\mu=\sec\theta_{n,m}$ and $\|\left(\Aop_{n,m}\right)_\mu\|=\tan\theta_{n,m}$.

Returning to Proposition \ref{P:Hardy}, note that $\Lop$ will be the {\em orthogonal} projection from $L^2(\bndry D,\mu)$ to $H^2(\bndry D,\mu)$ (the {\em Szeg\H o projection} for $\mu$) if and only if $\Lop^*_\mu=\Lop$; this is in turn equivalent to any one of the following conditions:
\begin{itemize}
\item $\Aop_\mu=0$;
\item each $\left(\Aop_{n,m}\right)_\mu=0$;
\item each $\|\Lop_{n,m}\|_\mu=1$;
\item each $\theta_{n,m}=0$;
\item $\|\Lop\|_\mu=1$.
\end{itemize}
Examining \eqref{E:kappa-tau} we see that this will happen if and only if \[\omega(s)=c_{n,m} \left(\dfrac{  s}{r_1^2}\right)^{n}\left(\dfrac{1- s}{ r_2^2}\right)^{m}\] for each $(n,m)$ with  $c_{n,m}$ a positive constant. Selecting $(n,m)=(0,0), (1,0)$ and $(0,1)$ in turn we see that  this can happen if and only if $\frac{  s}{r_1^2}, \frac{1- s}{ r_2^2}$ and $\omega(s)$ are constant.  Applying \eqref{E:ds/dt}, \eqref{E:ds/dt-rev} and Proposition \ref{P:pcon} we obtain the following.

\begin{Prop}Let $D\in \tilde \RR$ and let $\mu$ be an admissible measure on $\bndry D$. Then
$\Lop$ will be the Szeg\H o projection for  $\mu$ if and only if $D$ is of the form $\{(z_1,z_2)\st a_1|z_1|^2 + a_2|z_2|^2<1\}$  and $\omega(s)$ is constant.
\end{Prop}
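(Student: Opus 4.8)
The plan is to exploit the fact that $\Lop$ is the Szeg\H o projection precisely when it is self-adjoint, and that self-adjointness decouples across the orthogonal Fourier decomposition $\Lop=\sum_{n,m\ge0}\Lop_{n,m}$ recorded in \eqref{E:L-Four-def}. Since the subspaces $L^2_{n,m}(\bndry D,\mu)$ are mutually orthogonal and each $\Lop_{n,m}$ carries $L^2_{n,m}(\bndry D,\mu)$ into itself, the operator equals its adjoint $\Lop^*_\mu$ if and only if every piece $\Lop_{n,m}$ is self-adjoint. Thus the entire problem reduces to understanding when each rank-one piece is self-adjoint.

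By Proposition \ref{P:piece-norm} and \eqref{E:kaptau} each $\Lop_{n,m}$ is the (in general oblique) rank-one projection $f\mapsto\langle f,\kappa_{n,m}\rangle\tau_{n,m}$, with $\langle\tau_{n,m},\kappa_{n,m}\rangle=1$ by \eqref{E:kt1}. Such a projection is self-adjoint exactly when it is the orthogonal projection onto its range, which holds if and only if $\kappa_{n,m}$ is a scalar multiple of $\tau_{n,m}$; equivalently, recalling $\|\Lop_{n,m}\|_\mu=\sec\theta_{n,m}$, when $\theta_{n,m}=0$, i.e. $\|\Lop_{n,m}\|_\mu=1$. I would then insert the explicit formulas \eqref{E:kappa-tau}: the angular factors $e^{i(n\theta_1+m\theta_2)}$ already agree, so parallelism of $\kappa_{n,m}$ and $\tau_{n,m}$ is equivalent to proportionality of their $s$-dependent parts, which is the single scalar requirement
\[\omega(s)=c_{n,m}\left(\frac{s}{r_1^2}\right)^n\left(\frac{1-s}{r_2^2}\right)^m\]
for some positive constant $c_{n,m}$. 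Specializing to $(n,m)=(0,0),(1,0),(0,1)$ forces, in turn, $\omega(s)$, $s/r_1^2$, and $(1-s)/r_2^2$ each to be constant; conversely these three constancy statements clearly permit solving for all remaining $c_{n,m}$, so they are exactly equivalent to self-adjointness of $\Lop$.

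It remains to convert the geometric constancy of $s/r_1^2$ and $(1-s)/r_2^2$ into a statement about $D$. Writing $s=C_1r_1^2$ and differentiating gives $ds/dr_1=2s/r_1$, which compared against \eqref{E:ds/dt} yields $p\equiv2$; the companion relation \eqref{E:ds/dt-rev} gives the same conclusion from the $r_2$ side. With $p$ constant and equal to $2$, Proposition \ref{P:pcon} places $D$ in $\PP$, and \eqref{E:Pball} identifies the $p=2$ members as precisely the ellipsoids $\{a_1|z_1|^2+a_2|z_2|^2<1\}$. For the converse one checks directly from the $p=2$ instance of \eqref{E:gamma-PP} that $r_1^2=b_1^2 s$ and $r_2^2=b_2^2(1-s)$, so $s/r_1^2$ and $(1-s)/r_2^2$ are constant and, together with constant $\omega$, every piece becomes an orthogonal projection. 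The only genuinely delicate point is the opening reduction, justifying that self-adjointness of the assembled operator is equivalent to self-adjointness of each Fourier piece; but the orthogonality of the spaces $L^2_{n,m}(\bndry D,\mu)$ together with the invariance $\Lop_{n,m}\bigl(L^2_{n,m}(\bndry D,\mu)\bigr)\subseteq L^2_{n,m}(\bndry D,\mu)$ makes this routine, and everything after it is a short computation.
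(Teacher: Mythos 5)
Your proposal is correct and takes essentially the same route as the paper: the paper likewise reduces self-adjointness of $\Lop$ to the per-piece condition $\omega(s)=c_{n,m}\left(\frac{s}{r_1^2}\right)^{n}\left(\frac{1-s}{r_2^2}\right)^{m}$ via the formulas \eqref{E:kappa-tau} for $\kappa_{n,m},\tau_{n,m}$, specializes to $(n,m)=(0,0),(1,0),(0,1)$ to force $\omega(s)$, $\frac{s}{r_1^2}$ and $\frac{1-s}{r_2^2}$ constant, and then applies \eqref{E:ds/dt}, \eqref{E:ds/dt-rev} and Proposition \ref{P:pcon} to conclude $p\equiv 2$ and identify $D$ as an ellipsoid. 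Your explicit justification of the reduction to Fourier pieces and of the self-adjointness criterion for oblique rank-one projections only makes precise what the paper leaves implicit in its list of equivalent conditions.
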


Bolt has shown that the Leray transform for a strongly ($\C$-linearly) convex bounded domain in $\C^n$ with $C^3$-smooth boundary will coincide with the Szeg\H o projection (for a suitably-chosen measure) if and only if the domain is a complex-affine image of the unit ball  (\cite{Bol2}, \cite{Bol3}).

\section{More on boundary measures and geometry}\label{S:bdm}

From formulas
\eqref{E:mono-bndry} and \eqref{E:piece-norm} 
we see that our theory becomes simplest with the use of the measure 
 \[d\mu_0 = \frac{1}{4\pi^2} \,ds\,d\theta_1\,d\theta_2\]
 on $\bndry D$.
When $D$ is smooth and strongly convex this measure will be comparable to surface measure $d\sigma$ but in general this will not be so.  Indeed, from \eqref{E:phi-grad},\eqref{E:ds/dt} and \eqref{E:ds/dt-rev} we find that
\begin{align}\label{E:surfRR}
d\sigma &= r_1 r_2\left(1+(\phi'(r_1))^2\right)^{1/2}dr_1\,d\theta_1\,d\theta_2\\
& = 
r_1 r_2\sqrt{dr_1^2+dr_2^2} \,d\theta_1 \,d\theta_2\notag\\
&=\frac{r_1^2 r_2^2}{ps(1-s)}
\sqrt{\left(\dfrac{s}{r_1}\right)^2+\left(\dfrac{1-s}{r_2}\right)^2}
\,ds\,d\theta_1\,d\theta_2,
\notag
\end{align}
where $p$ is as in \eqref{E:p-one}.
From \eqref{E:levinorm} and \eqref{E:leray-num} we deduce
\begin{equation}\label{E:levi-surf}
|\levi|\,d\sigma= \frac{1}{4\left(\left(\dfrac{s}{r_1}\right)^2+\left(\dfrac{1-s}{r_2}\right)^2\right)}\,ds\,d\theta_1\,d\theta_2.
\end{equation}

 Lemma \ref{L:squ}  now shows that $d\mu_0$ is comparable to $|\levi|\,d\sigma$.
  In particular we see that $d\mu_0$
   will not be comparable to $d\sigma$ unless $|\levi|$ is bounded above and below.  For $D\in\PP$, for example, it is easy to check using \eqref{E:gamma-PP} that this happens if and only if $p=2$.
   
Formula \eqref{E:levi-surf} motivates the following

\begin{Def}\label{D:ord-q}
We will say that a rotation-invariant measure on the boundary of a domain $D\in\tilde\RR$ has {\em order $q$} if it is a continuous positive multiple of $|\levi|^{1-q}\,d\sigma$.
\end{Def}

Thus surface measure $d\sigma$ has order $q=1$, and the special measure $d\mu_0$
has order $q=0$.

 From \eqref{E:surfRR} and \eqref{E:levi-surf} we find 
\begin{equation*}\label{E:Levi-size}
|\levi| 
=
\frac{ps(1-s)}
{4r_1^2r_2^2
\left(\left(\dfrac{s}{r_1}\right)^2+\left(\dfrac{1-s}{r_2}\right)^2\right)^{3/2}}.
\end{equation*}
It follows that $\mu$ has order $q$ if and only if $\mu$ is expressed as
\begin{equation}\label{E:ord-q}
\varphi(s)\left(\dfrac{r_1^2r_2^2}{ps(1-s)}\right)^q\,ds\,d\theta_1\,d\theta_2,
\end{equation}
where $\varphi$ is assumed to be positive and continuous on all of $\bndry D$.
In particular, the {\em Fefferman measure} $d\mu^{\sf{Fef}}_{\bndry D}$ has order $q=2/3$, where
\begin{equation*}
d\mu^{\sf{Fef}}_{\bndry D}\eqdef|\levi|^{1/3}\,d\sigma
=
\left(\dfrac{r_1^2r_2^2}{2ps(1-s)}\right)^{2/3}\,ds\,d\theta_1\,d\theta_2
\end{equation*}
(see p. 259 of \cite{Fef}; also \cite{Bar1}).  This measure may be defined on general smooth pseudoconvex domains in $\C^2$ and plays a distinguished role in complex analysis due to the fact that it transforms
by the rule
\[
F^*\left(d\mu^{\sf{Fef}}_{\bndry D_2}\right) =|\det F'|^{2/3} d\mu^{\sf{Fef}}_{\bndry D_1}
\]
 under a biholomorphic mapping $F$ mapping $\bndry D_1$ to $\bndry D_2$.  (Modified versions of this construction work also in higher dimensions.)

Note for comparison that integrals of the form $\int_{\bndry D} |\levi|^{-1}\,d\sigma$ (corresponding to $q=2$) appear in work on spectral asymptotics of the $\bar\partial$-Neumann problem by Metivier \cite{Met}.

 If $D\in\RR$ then combining Proposition \ref{P:RRdom} with  \eqref{s-asymp}, \eqref{(1-s)-asymp} and \eqref{E:ord-q} we find that a measure of order $q$ is given by the following expression
 \begin{multline}\label{E:ord-q-rr}
 \varphi (s)\left(s^{\frac{2}{p_1}-1}(1-s)^{\frac{2}{p_2}-1}\right)^q\,ds\,d\theta_1\,d\theta_2 \\
 = \varphi (s)\,s^{q\left(\frac{1}{p_1}-\frac{1}{p_1^*}\right)}(1-s)^{q\left(\frac{1}{p_2}-\frac{1}{p_2^*}\right)}\,ds\,d\theta_1\,d\theta_2,\end{multline}
where $\varphi$ is positive and continuous on $bD$.
Recalling Definition \ref{D:adm} we easily obtain the following result.

\begin{Prop} \label{P:padm}
If $D\in\RR$ then a rotation-invariant measure of order $q$ is admissible if and only if \eqref{E:rc0} holds.
\end{Prop}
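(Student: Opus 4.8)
The plan is to reduce admissibility to a pair of one-dimensional integrability conditions and then read off the exponents. By Definition \ref{D:adm} together with the equivalence \itemref{N:adm}{I:adm} in Theorem \ref{T:adm}, a measure written in the form \eqref{E:meas1} is admissible precisely when both $\int_0^1\omega(s)\,ds$ and $\int_0^1\omega(s)\inv\,ds$ are finite. So the first step is to put a measure of order $q$ into the shape \eqref{E:meas1}: comparing the already-established expression \eqref{E:ord-q-rr} with \eqref{E:meas1} yields
\[
\omega(s)=4\pi^2\,\varphi(s)\,s^{q\left(\frac{1}{p_1}-\frac{1}{p_1^*}\right)}(1-s)^{q\left(\frac{1}{p_2}-\frac{1}{p_2^*}\right)},
\]
where $\varphi$ is the positive continuous multiplier appearing in the definition of order $q$.

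The second step is to observe that, since $\mu$ is rotation-invariant, $\varphi$ is a function of $s$ alone that is continuous on the compact interval $[0,1]$ (recall that $s$ extends to a homeomorphism $\gamma\to[0,1]$), hence bounded above and below by positive constants. Consequently $\varphi$ may be discarded for the purpose of testing convergence, and the finiteness of $\int_0^1\omega\,ds$ and of $\int_0^1\omega\inv\,ds$ is governed entirely by the powers of $s$ and $1-s$ near the endpoints. Setting $\alpha_j=q\left(\frac{1}{p_j}-\frac{1}{p_j^*}\right)$, the integral $\int_0^1\omega\,ds$ converges iff $\alpha_1>-1$ and $\alpha_2>-1$, while $\int_0^1\omega\inv\,ds$ converges iff $-\alpha_1>-1$ and $-\alpha_2>-1$; together these conditions read exactly $|\alpha_1|<1$ and $|\alpha_2|<1$.

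The final step is bookkeeping. Using $\frac{1}{p_j}-\frac{1}{p_j^*}=\frac{2}{p_j}-1$, the inequality $|\alpha_j|<1$ is equivalent to $|q|\,\bigl|\frac{1}{p_j}-\frac{1}{p_j^*}\bigr|<1$, i.e. $|q|<\bigl|\frac{1}{p_j}-\frac{1}{p_j^*}\bigr|\inv=\bigl|\frac{p_j}{p_j-2}\bigr|$; imposing this for both $j=1,2$ is precisely condition \eqref{E:rc0}. (In the degenerate case $p_j=2$ one gets $\alpha_j=0$, so the corresponding constraint is vacuous, matching the convention $\bigl|\frac{p_j}{p_j-2}\bigr|=\infty$ implicit in \eqref{E:rc0}.)

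I expect no serious obstacle: the substantive work has already been carried out upstream in deriving \eqref{E:ord-q-rr}, which converts the Levi-form/surface-measure description of order $q$ into the clean power form in $s$ and $1-s$. The present proposition is then an elementary endpoint-integrability check, and the only point requiring a little care is the justification in the second step that the continuous positive factor $\varphi$ is bounded between positive constants — this rests on continuity of $\varphi$ on the compact boundary together with the homeomorphism property of $s$, both already established.
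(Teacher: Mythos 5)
Your proof is correct and follows essentially the same route as the paper: the paper derives \eqref{E:ord-q-rr} and then simply states that the proposition follows "recalling Definition \ref{D:adm}," which is exactly the endpoint-integrability check (finiteness of $\int_0^1\omega\,ds$ and $\int_0^1\omega\inv\,ds$) that you carry out explicitly. Your added details — boundedness of $\varphi$ above and below via continuity on the compact boundary, and the bookkeeping $\bigl|\frac{1}{p_j}-\frac{1}{p_j^*}\bigr|\inv=\bigl|\frac{p_j}{p_j-2}\bigr|$ — are precisely the steps the paper leaves implicit.
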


Applying this to values of $q$ just discussed we see that $q=2/3$ (indeed, any $q\in[0,1]$) will always work, while $q=2$ works if and only if both of the $p_j$ lie in the interval $(\frac43, 4)$.

For later reference, we close this section with  a quick look at the differential geometry of $\bndry D$.  A computation based on the parameterization \eqref{E:s-theta-param} shows that the principal curvatures of $\bndry D$ are given by 
\begin{align}
 \kappa_1&=\frac{s}{r_1^2\sqrt{\left(\frac{s}{r_1}\right)^2+\left(\frac{1-s}{r_2}\right)^2}}\notag\\
 \kappa_2&=\frac{1-s}{r_2^2\sqrt{\left(\frac{s}{r_1}\right)^2+\left(\frac{1-s}{r_2}\right)^2}}\label{E:pr-curv}\\
\kappa_3&=(p-1)\frac{s(1-s)}{r_1^2r_2^2}\frac{1}{\left(\left(\frac{s}{r_1}\right)^2+\left(\frac{1-s}{r_2}\right)^2\right)^{3/2}}.\notag
\end{align}
For a point in $\gamma_+$ the corresponding principal directions are given by $(i,0)$, $(0,i)$ and the tangent to $\gamma_+$.  The principal directions at other points are found by rotation.
\bigskip

As a check, note that for the unit sphere we have  $p=2$, $r_1=\sqrt{s}$, $r_2=\sqrt{1-s}$  and thus $\kappa_1=\kappa_2=\kappa_3=1$.

\section{Asymptotics in $\RR$}\label{S:as}

In this section we perform asymptotic analysis of the norms of 
$\Lop_{n,m}$ and use these results to prove  Theorems \ref {T:rress} and \ref {T:rress-ks}.

\begin{Thm}\label{T:asymp} 
Suppose that $D\in\RR$ and that $\mu$ is an admissible measure on $\bndry D$ of order $q$ (as  in Definition \ref{D:ord-q}).

Let $(n_j, m_j)$ be a sequence in $\N\times\N$ with $\max\{n_j,m_j\}\to\infty$ as $j\to\infty$. 
\begin{enumerate}[{\normalfont (a)}]
\item If   $\min\{n_j,m_j\}\to\infty$ and $\frac{n_j}{m_j}\to u\in[0,\infty]$ then 
\begin{equation*}\label{E:case1}
\|\Lop_{n_j,m_j}\|_\mu^2\to \frac{\sqrt{\breve p\left(\frac{u}{1+u}\right) \breve p^*\left(\frac{u}{1+u}\right)}}{2},
\end{equation*}
where $\breve p$ was defined in \eqref{E:pb-def}. \label{I:u}
\item If $n_j$ is independent of $j$ then \[\|\Lop_{n_j,m_j}\|_\mu^2\to \frac{\Gamma\left(\frac{2n_0}{p_1}+1-q\left(\frac{1}{p_1}-\frac{1}{p_1^{*}}\right)\right)
\Gamma\left(\frac{2n_0}{p_1^{*} }+1-q\left(\frac{1}{p_1^{*}}-\frac{1}{p_1}\right)\right)
}
{\Gamma^{2}(n_0+1)
\left(\frac{2}{p_1}\right)^{\frac{2n_0}{p_1 }+1-q\left(\frac{1}{p_1}-\frac{1}{p_1^{*}}\right)}
\left(\frac{2}{p_1^{*}}\right)^{\frac{2n_0}{p_1^{*}}+1-q\left(\frac{1}{p_1^{*}}-\frac{1}{p_1}\right)}
}
\]
where $n_0$ is the common value of the $n_j$ and $p_1$ is as in Definition \ref{D:rr}.  \label{I:n-const}
\item If  $m_j$ is independent of $j$ then \[\|\Lop_{n_j,m_j}\|_\mu^2\to 
\frac{\Gamma\left(\frac{2m_0}{p_2}+1-q\left(\frac{1}{p_2}-\frac{1}{p_2^{*}}\right)\right)
\Gamma\left(\frac{2m_0}{p_2^{*} }+1-q\left(\frac{1}{p_2^{*}}-\frac{1}{p_2}\right)\right)
}
{\Gamma^{2}(m_0+1)
\left(\frac{2}{p_2}\right)^{\frac{2m_0}{p_2 }+1-q\left(\frac{1}{p_2}-\frac{1}{p_2^{*}}\right)}
\left(\frac{2}{p_2^{*}}\right)^{\frac{2m_0}{p_2^{*}}+1-q\left(\frac{1}{p_2^{*}}-\frac{1}{p_2}\right)}
}
\]
where $m_0$ is the common value of the $m_j$ and $p_2$ is as in Definition \ref{D:rr}\label{I:m-const}.
\end{enumerate}
\end{Thm}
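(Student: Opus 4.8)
The plan is to read all three limits off the product formula \eqref{E:piece-norm}: writing
\[
\|\Lop_{n,m}\|_\mu^2=\left(\frac{(n+m+1)!}{n!\,m!}\right)^2 J_\kappa\,J_\tau,
\]
with $J_\tau=\int_0^1 r_1^{2n}r_2^{2m}\,\omega\,ds$ and $J_\kappa=\int_0^1(s/r_1)^{2n}((1-s)/r_2)^{2m}\,\omega^{-1}\,ds$, I would treat $J_\tau$ and $J_\kappa$ as Laplace-type integrals in $s$. Using \eqref{E:r1-s}--\eqref{E:r2-s} to compute $\frac{d}{ds}\log r_1=\frac{1}{s\breve p(s)}$ and $\frac{d}{ds}\log r_2=-\frac{1}{(1-s)\breve p(s)}$ shows that the integrands of both $J_\tau$ and $J_\kappa$ peak at the same point $s_*=\frac{n}{n+m}$ (the phase of $J_\kappa$ carries a factor $1-\tfrac1{\breve p}\neq0$, so its critical point coincides with that of $J_\tau$). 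For a measure of order $q$ the weight $\omega$ is described near the endpoints by \eqref{E:ord-q-rr}, and this is the only place $q$ enters.

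In case (a) the peak $s_*\to u/(1+u)$ stays interior, so I would apply the interior Laplace method to each factor. At $s_*$ the phase second derivatives work out to $-\frac{2}{\breve p(s_*)}\frac{(n+m)^3}{nm}$ for $J_\tau$ and $-\frac{2}{\breve p^*(s_*)}\frac{(n+m)^3}{nm}$ for $J_\kappa$, so the two Gaussian widths together produce the factor $\sqrt{\breve p(s_*)\breve p^*(s_*)}$. The decisive simplification is that $\omega(s_*)$ occurs in $J_\tau$ and $\omega(s_*)^{-1}$ in $J_\kappa$ and therefore cancels (which is why $q$ disappears from case (a)), while $r_1(s_*),r_2(s_*)$ cancel against the $1/r_1,1/r_2$ of $J_\kappa$, leaving $s_*^{2n}(1-s_*)^{2m}=n^{2n}m^{2m}/(n+m)^{2(n+m)}$. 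Feeding in the Stirling asymptotics of the binomial prefactor, every exponential and polynomial factor cancels and one is left with exactly $\frac{\sqrt{\breve p(s_*)\breve p^*(s_*)}}{2}$; continuity of $\breve p$ on $[0,1]$ (condition \eqref{E:p-ext}) then delivers the stated value, including the endpoint cases $u\in\{0,\infty\}$.

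In case (b) the peak $s_*=\frac{n_0}{n_0+m}\to0$ slides to the endpoint, so an interior saddle is unavailable and I would instead run a boundary-layer analysis at $s=0$. There $\breve p(0)=p_1$, and \eqref{E:r1-asympt} gives $r_1\sim\mathrm{const}\cdot s^{1/p_1}$, while $r_2\sim b_2 e^{-s/p_1}$, $(1-s)/r_2\sim b_2^{-1}e^{-s/p_1^*}$, and $\omega\sim\mathrm{const}\cdot s^{q(1/p_1-1/p_1^*)}$ by \eqref{E:ord-q-rr}. After the rescaling $s\mapsto s/m$ each factor becomes a Gamma integral, $J_\tau\sim\mathrm{const}\cdot b_2^{2m}(p_1/2m)^{E_1}\Gamma(E_1)$ and $J_\kappa\sim\mathrm{const}\cdot b_2^{-2m}(p_1^*/2m)^{E_2}\Gamma(E_2)$, where $E_1,E_2$ are the two Gamma-arguments appearing in the statement (the $q$-shift entering the two factors with opposite signs, once through $\omega$ and once through $\omega^{-1}$). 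The factors $b_2^{\pm2m}$ and the remaining constants cancel, the powers of $m$ cancel against the Stirling factor $\big(\tfrac{(n_0+m+1)!}{n_0!\,m!}\big)^2\sim m^{2n_0+2}/\Gamma^2(n_0+1)$, and the $(2/p_1)^{E_1}(2/p_1^*)^{E_2}$ surface in the denominator, yielding the displayed quotient of Gammas. Case (c) then follows from case (b) by the coordinate symmetry $z_1\leftrightarrow z_2$ (i.e.\ $s\leftrightarrow1-s$, $n\leftrightarrow m$, $p_1\leftrightarrow p_2$), under which $\RR$ is invariant.

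The main obstacle is the endpoint regime of cases (b) and (c). Because the mass slides into $s=0$, the limit depends on the \emph{subleading} behavior of $\breve p$ there, which must be shown not to disturb the exponential rates $p_1,p_1^*$. This is precisely where $D\in\RR$ rather than merely $\tilde\RR$ is used: the Dini condition \eqref{E:p-Dini} forces $\int_0^s\big(\tfrac{1}{\breve p(t)}-\tfrac{1}{p_1}\big)\,dt=o(s)$, so across the layer $s\sim1/m$ the phase error is $o(1)$ and may be discarded. The same uniformity is what lets case (a) pass to the endpoint values $u\in\{0,\infty\}$, where $s_*$ itself tends to $0$ or $1$; matching the interior Laplace estimate to the boundary-layer estimate uniformly in $s_*$ is the most delicate part of the bookkeeping.
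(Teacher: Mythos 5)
Your proposal follows essentially the same route as the paper's proof: interior Laplace asymptotics at the peak $s_{n,m}=n/(n+m)$ for part (a), a Watson's-lemma endpoint analysis at $s=0$ for part (b) (the paper isolates this as Lemma \ref{L:wl}), and coordinate symmetry for part (c). The one structural difference is minor but worth noting: instead of your ``two integrals plus Stirling,'' the paper writes the binomial prefactor in \eqref{E:piece-norm} as $1/I_{n,m,0}^2$ via the beta integral \eqref{E:beta}, so that $\|\Lop_{n,m}\|_\mu^2=I_{n,m,-1}\,I_{n,m,1}/I_{n,m,0}^2$ is a ratio of three Laplace integrals sharing the same peak; the peak values then cancel identically (no Stirling needed), and the limit \eqref{E:keycomb} comes purely from the ratio of the three Gaussian widths \eqref{E:amk}, which is exactly the factor $\sqrt{\breve p\,\breve p^*}/2$ of your width computation.

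The substantive issue is the step you explicitly defer: justifying the Laplace/Watson approximations (i) uniformly in the location of the peak, so that part (a) covers $u\in\{0,\infty\}$, and (ii) in the presence of the weight, which by \eqref{E:hk-form} behaves like $s^{B_1}(1-s)^{B_2}$ with $B_1,B_2\in(-1,1)$ and so is neither bounded nor bounded away from zero at the endpoints. This is not routine bookkeeping; it is the bulk of the paper's proof. Two points in particular: first, since $\breve p$ is only continuous, the log-integrand is not $C^2$ away from the peak, so the textbook second-derivative form of Laplace's method you invoke is not directly available; the paper instead integrates the first derivative \eqref{E:nm-diff} and uses continuity of $\breve p$ together with the uniform bound \eqref{E:lqbd} (this is where \eqref{E:p-ext}, i.e.\ $D\in\RR$, enters part (a)) to obtain both the pointwise Gaussian limit \eqref{E:gnmk-lim} and the uniform exponential majorant \eqref{E:gnmk-est2}. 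Second, the weight does not merely ``cancel at the peak'': one must show its rescaled version tends to $1$ in an integrated sense against that majorant, which the paper proves in \eqref{E:hnmk-move} by a five-piece decomposition whose near-endpoint pieces are controlled precisely because $|B_1|,|B_2|<1$, i.e.\ because $\mu$ is admissible and $q$ satisfies \eqref{E:rc0}. Your plan invokes the right hypotheses at the right places, so filling it in would reproduce the paper's argument, but as written these estimates are missing. A final remark: your sign bookkeeping in part (b) ($q$ entering the two Gamma factors with opposite signs, through $\omega$ and $\omega^{-1}$) yields, when made explicit, exponents of the form $\frac{2n_0}{p_1}+1+q\left(\frac{1}{p_1}-\frac{1}{p_1^{*}}\right)$, agreeing with \eqref{E:n-const-lim} and with $\lambda_{p_1,q,n_0}$ in \eqref{E:gamma}; the opposite signs printed in the statement of the theorem appear to be a typo, not a defect of your derivation.
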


\begin{proof} [Proof of Theorems \ref{T:rress} and \ref{T:rress-ks}, assuming Theorem \ref{T:asymp}.]
Suppose that 
\begin{enumerate}
\item[(*)]
{\em $ \lim\|\Lop_{n_j,m_j}\|_\mu$
  exists (possibly equal to $+\infty$) along some 
sequence $(n_j,m_j)$ with $\mathrm{max}\{n_j,m_j\}
\to\infty$ as $j\to \infty$.}
\end{enumerate}
 We consider the sequence of quotients: $\{n_j/m_j\} \subset [0,+\infty]$
 and distinguish the two cases: $\mathrm{min}\{n_j, m_j\} \to\infty$;  
$\mathrm{min}\{n_j, m_j\} \not\to\infty$. In either case, passing to a subsequence 
 we may arrange for one of 
 the following three conditions to hold:
\refstepcounter{equation}\label{N:trichot}
\begin{enum}
\item $\min\{n_j,m_j\}\to\infty$ and $\frac{n_j}{m_j}$ approaches some value $u\in[0,\infty]$, or
\item $n_j$ is independent of $j$, or
\item $m_j$ is independent of $j$.
\end{enum}
Then Theorem \ref{T:asymp} provides the limiting value of 
$\|\Lop_{n_j,m_j}\|_\mu$ in each of these three cases.  

In particular, since  Theorem \ref{T:asymp}  shows that none of these limiting values can be infinite, we conclude that the set $\{\|\Lop_{n,m} \|_\mu\st {n,m\ge 0} \}$ is bounded.  Then Theorem \ref{T:Lnrm} shows that 
$\Lop$ is bounded on $L^{2}(\bndry D,\mu)$. 

The orthonormal basis of eigenfunctions for $\Lop_\mu^*\Lop$ is obtained by combining eigenfunction bases for each $L^2_{n,m}(\bndry D,\mu)$.

Finally, we use Theorem \ref{T:Less1} and the above description of limiting values of 
$\|\Lop_{n_j,m_j}\|_\mu$ to verify the description of the essential spectrum of $\Lop_\mu^*\Lop$.  

This completes the proof of Theorem \ref{T:rress}.  The proof of Theorem \ref{T:rress-ks} proceeds in similar fashion using  Theorem \ref{T:Anrm}.
\end{proof}

\begin{proof} [Proof of Theorem \ref{T:asymp}, part (\ref{I:u}).] 
This is a variation on Laplace's method for asymptotic expansion of integrals. (See for example Chapter 3 of \cite{Mil}.)

From \eqref{E:piece-norm} and \eqref{E:beta} we have
\begin{equation}\label{E:iks}
\|\Lop_{n,m}\|_\mu^2 = \frac{I_{n,m,-1}\cdot I_{n,m,1}}{I_{n,m,0}^2}, 
\end{equation}
where 
\begin{align*}
I_{n,m,k} &= \int_0^1 g_{1,k}^n g_{2,k}^m h_k \,ds,\\
g_{1,k} &= r_1^{2k} s^{1-k}, \\
g_{2,k} &= r_2^{2k} (1-s)^{1-k}, \\
h_k &= \omega^k.
\end{align*}

Using \eqref{E:ds/dt} and \eqref{E:ds/dt-rev} we have 
\begin{align}\label{E:nm-diff}
\frac{d}{ds}\left(\log g_{1,k}^n g_{2,k}^m \right)
&= 
\left( \frac{2k}{\breve p(s)} + 1 - k\right) \frac{n-(n+m)s}{s(1-s)}\notag\\\
&=\left( \frac{2k}{\breve p(s)} + 1 - k\right)
\left(\frac{n}{s}-\frac{m}{1-s}\right).
\end{align}
Noting that
\begin{equation*}
\frac{2k}{\breve p(s)} + 1 - k
= \begin{cases}
2/\breve p^*(s) & \text{ for } k=-1\\
1 & \text{ for } k=0\\
2/\breve p(s) & \text{ for } k=1\\
\end{cases}
\end{equation*}
and recalling \eqref{E:p-ext} we see that 
\begin{align}
\frac{2k}{\breve p(s)} + 1 - k&\le 2,\label{E:uqbd}\\
\label{E:lqbd}
C_k\eqdef \inf\limits_{0\le s \le 1}\left(\frac{2k}{\breve p(s)} + 1 - k\right)&>0.
\end{align}

It follows easily that $\log g_{1,k}^n g_{2,k}^m$ takes its maximum value at \[s_{n,m}\eqdef \frac{n}{n+m}.\]  (We'll assume for the remainder of this proof that $n,m>0$ and thus $0<s_{n,m}<1$.)
  Integrating \eqref{E:nm-diff} from $s_{n,m}$ to $s$ and applying \eqref{E:lqbd} we find that in fact 
\begin{equation} \label{E:dom1}
\log 
\left(\frac{g_{1,k}^n(s) g_{2,k}^m(s)}{g_{1,k}^n(s_{n,m})g_{2,k}^m(s_{n,m})}\right)\le 
C_k
\log 
\left(\frac{s^n (1-s)^m}{s_{n,m}^n(1-s_{n,m})^m}\right).
\end{equation}

We set
\begin{equation}\label{E:amk}
A_{n,m,k} = \sqrt{ \frac{2nm}{\left( \frac{2k}{\breve p(s_{n,m})} + 1 - k\right) (n+m)^3}}.
\end{equation}
(The reader interested in tracing the motivation for the computations to come may wish to note that $A_{n,m,k}=\sqrt{-2/\left( \log g_{1,k}^n g_{2,k}^m\right)''(s_{n,m})}$, though $\left(\log  g_{1,k}^n g_{2,k}^m\right)''(s)$ may not exist for other values of $s$.)

Using \eqref{E:uqbd} and \eqref{E:lqbd} it is easy to check that 
\begin{align}
A_{n,m,k} & \ge  \sqrt{\frac{nm}{(n+m)^3}}\label{E:abdd1}\\
&=s_{n,m}\sqrt{\frac{m}{n(n+m)}}\notag\\
&=(1-s_{n,m})\sqrt{\frac{n}{m(n+m)}},\label{E:a-low}\notag\\
A_{n,m,k} &\le \frac{\sqrt{2} s_{n,m}}{\sqrt{C_k n}},\\
A_{n,m,k} &\le \frac{\sqrt{2} (1-s_{n,m})}{\sqrt{C_k m}}\label{E:abdd2},\\
A^2_{n,m,k} &\le \frac{2}{C_k} s_{n,m}^2 (1-s_{n,m})\label{E:quad1},\\
A^2_{n,m,k} &\le \frac{2}{C_k} s_{n,m} (1-s_{n,m})^2\label{E:quad2},
\end{align}
where $C_k$ is as in \eqref{E:lqbd}.
In particular we also have 
\begin{equation}\label{E:amko}
A_{n,m,k} = o( s_{n,m} ) \text{ and } A_{n,m,k} = o(1- s_{n,m} )
\text{ as } \min\{n,m\} \to \infty.
\end{equation}

We define  functions $g_{n,m,k}$ and  $h_{n,m,k}$ on $\R$ by setting
\begin{align}
g_{n,m,k}(t) &= \dfrac{g_{1,k}^n(  s_{n,m} + t A_{n,m,k} ) g_{2,k}^m( s_{n,m} + t A_{n,m,k} )}
{g_{1,k}^{n}(s_{n,m}) g_{2,k}^{m}(s_{n,m})}\label{E:gnmk-def}\\
h_{n,m,k}(t)&= \dfrac{h_k(  s_{n,m} + t A_{n,m,k} ) }{h_k(s_{n,m}) }
\label{E:hnmk-def}
\end{align} 
for $t\in J_{n,m,k}\eqdef \left( -\frac{s_{n,m}}{A_{n,m,k}}, \frac{1-s_{n,m}}{A_{n,m,k}}\right)$,  with
$
g_{n,m,k}(t) =h_{n,m,k}(t)= 0
$
otherwise. 

Note  that $g_{n,m,k}(t)$  assumes a maximum value of $1$ at $t=0$. Note also that from  \eqref{E:dom1} and \eqref{E:uqbd} and the monotonicity properties of $g_{n,m,0}$ we have
\begin{align}\label{E:gnmk-est1}
g_{n,m,k}(t) \le g_{n,m,0}^{C_k}\left(\frac{t A_{n,m,k}}{A_{n,m,0}}
 \right)
\le g_{n,m,0}^{C_k}\left(\frac{t}{\sqrt{2}}
 \right)
\end{align}
for $k=\pm 1$.

We claim that
\begin{align}\label{E:gnmk-est2}
g_{n,m,k}(t)  \le e^{C_k(1-2^{-|k|/2}|t|)}\text{ for all $t$ when $\min\{n,m\}\ge 2, \, k=-1,0,1$.}
\end{align}
In view of \eqref{E:gnmk-est1} it will suffice to prove
\begin{align}\label{E:gnmk-est0}
g_{n,m,0}(t)  \le e^{1-|t|}\text{ for all $t$ when $\min\{n,m\}\ge 2$.}
\end{align}
This is trivial for $t\notin J_{n,m,0}$.  For $|t|<1$ it follows from $g_{n,m,k}(t)\le 1$.  
For  $t\in J_{n,m,0}$, $|t|>1$, we use \[g_{n,m,0}(t)
=\left(1+t\sqrt{\frac{2m}{n(n+m)}}\right)^n\!
\left(1-t\sqrt{\frac{2n}{m(n+m)}}\right)^m\] to verify that
$(\log g_{n,m,0})''(t)<0$ so that $\log g_{n,m,0}$ is concave on $ J_{n,m,k}$.  In particular,  for $t\in J_{n,m,0}, t>1$ we have
\begin{align*}
\log g_{n,m,0} (t)
& \le \log g_{n,m,0} (1) + (\log g_{n,m,0})'(1) \cdot(t-1)\\
&\le 0+(\log g_{n,m,0})'(1)\cdot (t-1)\\
&= -\frac{2}{\left(1+\sqrt{\frac{2m}{n(n+m)}}\right)\left(1-\sqrt{\frac{2n}{m(n+m)}}\right)}\cdot(t-1)\\
&\le -\frac{2}{1+\sqrt{\frac{2m}{n(n+m)}}}\cdot(t-1)\\
&\le -\frac{2}{1+\sqrt{\frac{2}{n}}}\cdot(t-1)\\
& \le -(t-1)
\end{align*}
showing that \eqref{E:gnmk-est0} holds. A symmetric argument shows that \eqref{E:gnmk-est0} holds also in the remaining case: 
$t\in J_{n,m,0}, t<-1$.

Next we consider the pointwise behavior of $g_{n,m,k}(t)$ as $\min\{n,m\}\to\infty$.  Note that \eqref{E:amko} guarantees that  each fixed $t$ lies in $J_{n,m,k}$ when $\min\{n,m\}$ is large enough.  
Using \eqref{E:nm-diff} we have
\begin{multline*}
(\log g_{n,m,k})'(t)
=
-
\left( \frac{2k}{\breve p(s_{n,m}+A_{n,m,k} t)} + 1 - k\right)\\
\cdot\frac{(n+m) 
A_{n,m,k}^2 t}{(s_{n,m}+A_{n,m,k} t)(1-s_{n,m}-A_{n,m,k}t)}
\end{multline*}
and so
\begin{multline*}
\log g_{n,m,k}(t)
=- (n+m) 
A_{n,m,k}^2\int_{0}^t \left( \frac{2k}{\breve p(s_{n,m}+A_{n,m,k} \tau)} + 1 - k\right)\\
\cdot \frac{ \tau}{(s_{n,m}+A_{n,m,k} \tau)(1-s_{n,m}-A_{n,m,k}\tau)}\,d\tau.
\end{multline*}
Letting $\min\{n,m\}\to\infty$ we find with the use of \eqref{E:abdd1}, \eqref{E:abdd2}  that the above integral is asymptotic to
\begin{multline*}
\int_{0}^t \left( \frac{2k}{\breve p(s_{n,m} )} + 1 - k\right)
\cdot \frac{ \tau}{s_{n,m}(1-s_{n,m})}\,d\tau\\
=
\left( \frac{2k}{\breve p(s_{n,m})} + 1 - k\right) \frac{t^2}{2 s_{n,m} (1-s_{n,m})};
\end{multline*}
 hence
\begin{align}\label{E:gnmk-lim}
\lim & \,g_{n,m,k}(t)\notag\\
&= \lim \exp\left( -(n+m) A_{n,m,k}^2 \left( \frac{2k}{\breve p(s_{n,m})} + 1 - k\right) \frac{t^2}{2 s_{n,m} (1-s_{n,m})}
\right)\notag\\
&= e^{-t^2}.
\end{align}

Turning now to $h_{n,m,k}$, see \eqref{E:hnmk-def}, we first use \eqref{E:ord-q-rr} and \eqref{E:rc0} to verify that $h_k$ takes the form 
\begin{equation}\label{E:hk-form}
h_k(s)=\phi(s) s^{B_1} (1-s)^{B_2}
\end{equation}
 with $\phi$ positive and continuous on $[0,1]$ and $B_1, B_2\in(-1,1)$.
Then we see
\begin{equation}\label{E:hmnk-bdd}
h_{n,m,k}(t) \le C \text{ when }
{-\frac{s_{n,m}}{2A_{n,m,k}}}\le t \le{\frac{1-s_{n,m}}{2A_{n,m,k}}}
\end{equation}
and using \eqref{E:hnmk-def} and \eqref{E:amko} we obtain
\begin{equation}\label{E:hmnk-lim}
h_{n,m,k}(t)\to 1 \text{ as $\min\{n,m\}\to\infty$, uniformly on bounded sets.}
\end{equation}

We claim that also 
\begin{equation}\label{E:hnmk-move}
\int_{-\infty}^\infty \left|h_{n,m,k}(t) - 1\right|
e^{C_k\left(1-2^{-|k|/2}|t|\right)}\,dt \to  0 \text{ as $\min\{n,m\}\to\infty$.}
\end{equation}
 To see this, decompose  
the integral into five pieces
\begin{equation*}\label{E:5cut}
\int_{-\infty}^{-\frac{s_{n,m}}{A_{n,m,k}}}
+
\int_{-\frac{s_{n,m}}{A_{n,m,k}}}^{-\frac{s_{n,m}}{2A_{n,m,k}}}
+
\int_{-\frac{s_{n,m}}{2A_{n,m,k}}}^{\frac{1-s_{n,m}}{2A_{n,m,k}}}
+
\int_{\frac{1-s_{n,m}}{2A_{n,m,k}}}^{\frac{1-s_{n,m}}{A_{n,m,k}}}
+
\int_{\frac{1-s_{n,m}}{A_{n,m,k}}}^\infty.
\end{equation*}
The first and fifth terms involve intervals on which $h_{n,m,k}$ vanishes, so they reduce to $\int_{-\infty}^{-\frac{s_{n,m}}{A_{n,m,k}}} e^{C_k\left(1+2^{-|k|/2}t\right)}\,dt$
and
$\int_{\frac{1-s_{n,m}}{A_{n,m,k}}}^\infty e^{C_k\left(1-2^{-|k|/2}t\right)}\,dt
$, respectively, and thus 
tend to zero by \eqref{E:amko}.  
Using the dominated convergence theorem with the support of \eqref{E:hmnk-bdd} and \eqref{E:hmnk-lim} we see that the third term tends to zero.  Setting $v=\frac{s_{n,m} + t A_{n,m,k} }{s_{n,m}}$ 
we find that the second term may be written as
\begin{equation*}
\frac{s_{n,m}}{A_{n,m,k}}
\int_0^{1/2}
\left|
\frac{\phi(v s_{n,m})}{\phi(s_{n,m})} v^{B_1}\left(\frac{1-vs_{n,m}}{1-s_{n,m}}\right)^{B_2}
-1
\right|
e^{C_k\left(1+\frac{s_{n,m}(v-1)}{2^{|k|/2}A_{n,m,k}}\right)}
\,dv;
\end{equation*}
 this is bounded by a constant times 
 \begin{align*}
\frac{s_{n,m}}{A_{n,m,k}(1-s_{n,m})} 
e^{-\frac{C_k s_{n,m}}{2^{1+|k|/2}A_{n,m,k}}}
& \le \frac{2}{C_k} \left(\frac{s_{n,m}}{A_{n,m,k}} \right)^3
e^{-\frac{C_k s_{n,m}}{2^{1+|k|/2}A_{n,m,k}}}
& \\
&\to 0.
\end{align*}
(The inequality stems from \eqref{E:quad1}.)
  A similar argument takes care of the fourth term.

We are now ready to compute that
\begin{align}
\frac{I_{n,m,k}}
{A_{n,m,k} h_k(s_{n,m}) g_{1,k}^n(s_{n,m}) g_{2,k}^m(s_{n,m})} &= 
\frac{\int_0^1 g_{1,k}^n(s) g_{2,k}^m(s) h_k(s)\,ds}
{A_{n,m,k} h_k(s_{n,m}) g_{1,k}^n(s_{n,m}) g_{2,k}^m(s_{n,m})}\notag\\
&=
\int_{-\infty}^\infty h_{n,m,k}(t) g_{n,m,k}(t) \, dt \notag\\
&=\int_{-\infty}^\infty \left(h_{n,m,k}(t)-1\right) g_{n,m,k}(t) \, dt\label{E:keylime}\\
&\,\qquad+
\int_{-\infty}^\infty  g_{n,m,k}(t) \, dt \notag
\\
&\to 
0+\int_{-\infty}^\infty e^{- t^2} \, dt =\sqrt{\pi}\notag
\end{align}
as $\min\{n,m\}\to\infty$, where we have used \eqref{E:gnmk-est2} and \eqref{E:hnmk-move} to find the limit of the first term and \eqref{E:gnmk-est2}, \eqref{E:gnmk-lim} and dominated convergence to find the limit of the second term.

Combining these results and simplifying we have
\begin{align}
\frac{2}{\sqrt{\breve p(s_{n,m}) \breve p^*(s_{n,m})}} \|\Lop_{n,m}\|_\mu^2
&= \frac{A_{n,m,0}^2}{A_{n,m,-1}A_{n,m,1}}\cdot\frac{I_{n,m,-1}\cdot I_{n,m,1}}{I_{n,m,0}^2}\notag\\
&\to 1 \label{E:keycomb}
\end{align}
as $\min\{n,m\}\to\infty$, which implies part (\ref{I:u}) of
Theorem \ref{T:asymp}. 
\end{proof}

\begin{Lem}\label{L:wl}
Suppose that
\begin{itemize}
\item $H$ is a continuous function on $[0,1]$;
\item $H(0)\ne0$;
\item $g$ is a non-negative $C^1$ function on $[0,1]$ with a strict maximum at $0$;
\item $g'(0)<0$;
\item $\sigma>-1$.
\end{itemize}
Then
\begin{equation}\label{E:wlv}
\int_0^1  g^m(s) s^\sigma H(s) \,ds \sim H(0) \left(\frac{-g'(0)}{g(0)}\right)^{-\sigma-1}  \Gamma(\sigma+1)   m^{-\sigma-1}
g^m(0)
\end{equation}
as $m\to\infty$.
\end{Lem}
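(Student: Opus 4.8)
The plan is to treat this as a textbook instance of Laplace's method, localizing the integral near $s=0$, where the integrand is exponentially largest, and extracting the leading term by rescaling. First I would record the elementary consequences of the hypotheses. Since $g$ is nonnegative and has a \emph{strict} maximum at $0$, we must have $g(0)>0$ (otherwise $0$ would be a minimum of $g$, contradicting the strict maximum together with $g'(0)<0$), so the quantity $\alpha\eqdef -g'(0)/g(0)$ is a well-defined positive number. With this notation the asserted constant in \eqref{E:wlv} is exactly $H(0)\,\alpha^{-\sigma-1}\Gamma(\sigma+1)$, and the whole game is to produce this factor.

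Next I would peel off the contribution away from the origin. Fix a small $\delta>0$. On the compact set $[\delta,1]$ continuity and the strict-maximum hypothesis give $\sup_{[\delta,1]} g=\eta\,g(0)$ for some $\eta<1$, whence
\[
\left|\int_\delta^1 g^m(s)\,s^\sigma H(s)\,ds\right|\le \|H\|_\infty\,(\eta\,g(0))^m=o\!\left(g^m(0)\,m^{-\sigma-1}\right)
\]
because $\eta^m$ decays faster than any power of $m$ (note $\sigma+1>0$). Thus the tail is negligible against the claimed main term, and everything reduces to analyzing $\int_0^\delta$.

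For the main piece I would factor out $g^m(0)$ and rescale by $s=t/m$, obtaining
\[
\int_0^\delta g^m(s)\,s^\sigma H(s)\,ds=g^m(0)\,m^{-\sigma-1}\int_0^{m\delta}\exp\!\big(m[\log g(t/m)-\log g(0)]\big)\,t^\sigma H(t/m)\,dt.
\]
The integrand converges pointwise in $t$: by differentiability of $\log g$ at $0$ we have $m[\log g(t/m)-\log g(0)]\to(g'(0)/g(0))\,t=-\alpha t$, while $H(t/m)\to H(0)$ by continuity, so the integrand tends to $H(0)\,t^\sigma e^{-\alpha t}$. To pass the limit through the integral I would invoke dominated convergence: choosing $\delta$ small enough that continuity of $g'/g$ forces $g'(s)/g(s)\le-\alpha/2$ on $[0,\delta]$, integration gives $\log g(s)-\log g(0)\le-\tfrac{\alpha}{2}s$, hence $\exp(m[\log g(t/m)-\log g(0)])\le e^{-\alpha t/2}$, and $\|H\|_\infty\,t^\sigma e^{-\alpha t/2}$ is an integrable majorant on $[0,\infty)$ \emph{precisely because} $\sigma>-1$. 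Dominated convergence then yields the limit $\int_0^\infty H(0)\,t^\sigma e^{-\alpha t}\,dt=H(0)\,\alpha^{-\sigma-1}\Gamma(\sigma+1)$, which together with the negligible tail establishes \eqref{E:wlv}.

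The main obstacle is the limited regularity: with only $g\in C^1$ there is no quadratic Taylor coefficient to exploit, so the argument must be routed entirely through the logarithmic derivative $g'/g$ — using its value $-\alpha$ at $0$ for the pointwise limit of the exponent and its continuity near $0$ for the uniform exponential majorant — rather than through a second-order expansion of $g$. Keeping these two distinct uses of $g'/g$ separate, and verifying that the majorant is integrable exactly under the hypothesis $\sigma>-1$, is the delicate bookkeeping; the remaining steps are routine.
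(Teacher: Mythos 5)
Your proof is correct and takes essentially the same route as the paper's: both rescale by $s = t/m$, obtain the pointwise limit $m[\log g(t/m)-\log g(0)] \to (g'(0)/g(0))\,t$ from differentiability of $\log g$ at $0$, and finish with dominated convergence and the Gamma integral. The only difference is bookkeeping: you split off $[\delta,1]$ and build the integrable majorant locally from $g'/g \le -\alpha/2$ near $0$, whereas the paper skips the splitting and dominates globally on $[0,1]$ via the bound $g(s)\le g(0)e^{-\eps s}$.
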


\begin{proof}
This is a minor variation of Watson's lemma. (See for example Chapter 2 of \cite{Mil}.)

First note that the hypotheses on $g$ imply that 
$\frac{\log(g(s)/g(0))}{s}$ extends to a negative continuous function on $[0,1]$ and so
$g(s)\le g(0)\exp(-\eps s)$ for some $\eps>0$; thus for some $C>0$ we have
\begin{equation}\label{E:wl-bdd}
\left(\frac{g(s/m)}{g(0)}\right)^m s^\sigma H(s/m)\le C \exp(-\eps s) s^\sigma
\end{equation}
on $[0,m]$. Also note that
\begin{equation}\label{E:wl-lim}
m\left(\log g(s/m)-\log g(0)\right) \to s(\log g)'(0)=\frac{g'(0)}{g(0)} s
\end{equation}
as $m\to\infty$.

Using the change of variables formula and the dominated convergence theorem  with the support of \eqref{E:wl-bdd} and \eqref{E:wl-lim} we have
\begin{align*}
m^{\sigma+1} g^{-m}(0) &\int_0^1 g^m(s) s^\sigma H(s)\,ds\\
= &\int_0^m \left(\frac{g(s/m)}{g(0)}\right)^m s^\sigma H(s/m)\,ds\\
\to & \int_0^\infty \exp\left( \frac{g'(0)}{g(0)} s\right) s^\sigma H(0)\,ds\\
=& H(0) \left(-\frac{g(0)}{g'(0)}\right)^{\sigma+1} \int_0^\infty \exp(-s)s^\sigma\,ds\\
=& H(0) \left(-\frac{g(0)}{g'(0)}\right)^{\sigma+1}  \Gamma(\sigma+1)
\end{align*}
which is equivalent to \eqref{E:wlv}.
\end{proof}

\begin{proof} [Proof of Theorem \ref{T:asymp}, part (\ref{I:n-const}).]  We focus on the same three integrals as in the proof of part \eqref{I:u}.

Let $n_0$ be the common value of the $n_j$. 
To apply Lemma \ref{L:wl}, we use \eqref{E:r1-asympt} and \eqref{E:ord-q-rr}  to match the integrals to the left-hand side of \eqref{E:wlv} and we use \eqref{E:r2-s} to evaluate $\frac{d\log r_2}{ds}$ and $\frac{d\log ((1-s)/r_2)}{ds}$ at $s=0$.  The resulting approximations read as follows: 
\begin{align*}
\bullet
 \int_0^1 &\!s^{n_0}(1-s)^{m_j}\,ds
\sim \Gamma(n_0+1) m_j^{-n_0-1}\ ; \\
\bullet
 \int_0^{1} &\!r_1^{2n_0} r_2^{2m_j}\omega(s)\,ds \ \\
&\qquad\quad\ \sim \frac
{h(0)\, p_1^{1+q\left(\frac{1}{p_1}-\frac{1}{p_1^*}\right)} b_2^{2m_j+\frac{2n_0}{p_1}}\Gamma\left(\frac{2n_0}{p_1}+1+q\left(\frac{1}{p_1}-\frac{1}{p_1^*}\right)\right)}
{c_2^{\frac{2n_0}{p_1}} (2m_j)^{\frac{2n_0}{p_1}+1+q\left(\frac{1}{p_1}-\frac{1}{p_1^*}\right)}}\ ;\\
\bullet
 \int_0^{1}&\!\! \left(\dfrac{  s}{r_1}\right)^{\!2n_0}
 \!\! \left(\dfrac{1- s}{ r_2}\right)^{\!2m_j}\!\!
 \frac{1}{\omega(s)}\,ds\\
&\qquad\quad\ \sim \frac
{p_1^{\frac{2n_0}{p_1}} (p^*_1)^{\frac{2n_0}{p_1^*}+1-q\left(\frac{1}{p_1}-\frac{1}{p_1^*}\right)}c_2^{\frac{2n_0}{p_1}}\Gamma\left(\frac{2n_0}{p^*_1}+1-q\left(\frac{1}{p_1}-\frac{1}{p_1^*}\right)\right)}
{h(0)\, b_2^{2m_j+\frac{2n_0}{p_1}} (2m_j)^{\frac{2n_0}{p^*_1}+1-q\left(\frac{1}{p_1}-\frac{1}{p_1^*}\right)}}.
\end{align*}
  
  Plugging these results into \eqref{E:piece-norm} and \eqref{E:beta}
we find that 
\begin{equation}\label{E:n-const-lim}
\|\Lop_{n_0,m_j}\|_\mu^2
\to 
\frac{\Gamma\left(\frac{2n_0}{p_1}+1+q\left(\frac{1}{p_1}-\frac{1}{p_1^{*}}\right)\right)
\Gamma\left(\frac{2n_0}{p_1^{*} }+1+q\left(\frac{1}{p_1^{*}}-\frac{1}{p_1}\right)\right)
}
{\Gamma^{2}(n_0+1)
\left(\frac{2}{p_1}\right)^{\frac{2n_0}{p_1 }+1+q\left(\frac{1}{p_1}-\frac{1}{p_1^{*}}\right)}
\left(\frac{2}{p_1^{*}}\right)^{\frac{2n_0}{p_1^{*}}+1+q\left(\frac{1}{p_1^{*}}-\frac{1}{p_1}\right)}
}
\end{equation}
as claimed.
\end{proof}

\begin{proof} [Proof of Theorem \ref{T:asymp}, part (\ref{I:m-const}).]
This is parallel to the proof of part (\ref{I:n-const}).
\end{proof}

\section{Examples} \label{S:ce}

\begin{Ex}\label{E:x1}
Let $\breve p$ be a smooth map from $[0,1]$ to $[1,\infty)$ satisfying 
\begin{itemize}
\item $\breve p\left(\frac12\right)=1$;
\item $\breve p(s)>1$ for $s\ne \frac12$;
\item $\breve p(s) \equiv 2$ for $s$ near $0$ and for $s$ near $1$.  
\end{itemize}
Let $D$ be the domain generated by $\breve p, 1, 1$ as in 
\eqref{E:s-theta-param} and Definition \ref{D:gen}. 
We claim that $D$ has the following properties:
\refstepcounter{equation}\label{N:exprop1}
\begin{enum}
\item $D$ is a  convex Reinhardt domain with $C^\infty$-smooth  boundary;
\item $D\notin \tilde \RR$; \label{I:ntr1}
\item $D$ is strictly convex (i.e., $\bndry D$ contains no line segments);
\item the principal curvatures $\kappa_1$ and $\kappa_2$ are strictly positive, but $\kappa_3$ vanishes precisely on the  torus corresponding to $s=1/2$;
\item the conditions of Theorem \ref{T:adm} are still equivalent and thus can still be used to define the notion of an admissible measure as in Definition \ref{D:adm}; \label{I:adm1}
\item measures of order $q$  are admissible for all $q\in\R$; \label{I:qadm1}
\item $\Lop_D$ fails to be bounded on $L^2(\bndry D,\mu)$ 
when $\mu$ is any admissible measure  given by $\frac{1}{4\pi^2}\omega(s)\,ds\,d\theta_1\,d\theta_2$ with $\omega(s)$ positive and continuous for $s\in(0,\frac12)\cup(\frac12, 1)$.\label{I:unbdd1}
\end{enum}

The first four items follow easily from material in the beginning of \S \ref{S:geo} along with \eqref{E:pr-curv} (see in particular Theorem \ref{T:rr-tilde-cond} to check item \itemref{N:exprop1}{I:ntr1}).  Item \itemref{N:exprop1}{I:adm1} follows from the continued validity of the conclusions of Lemmas \ref{L:HB} and \ref{L:squ}. Item \itemref{N:exprop1}{I:qadm1} follows from \eqref{E:ord-q}.  (In fact, the value of $q$ is irrelevant here.)

To verify item \itemref{N:exprop1}{I:unbdd1}, consider a sequence  $(n_j, m_j)$  in $\N\times\N$ with $\frac{n_j}{m_j}\to u\in (0,1)\cup(1,\infty)$.
Referring to the proof of Theorem \ref{T:asymp}, part (\ref{I:u}) and in particular to \eqref{E:keylime} we find that $h_{n_j,m_j,k}(t) g_{n_j,m_j,k}(t) \to e^{-t^2}$ uniformly for $-1\le t\le 1$.  Truncating the integral we see that 
\begin{equation*}
\liminf\frac{I_{n_j,m_j,k}}
{A_{n_j,m_j,k} h_k(s_{n_j,m_j}) g_{1,k}^{n_j}(s_{n_j,m_j}) g_{2,k}^{m_j}(s_{n_j,m_j})} 
\ge \int_{-1}^1 e^{- t^2} \, dt
\end{equation*}
for $k=-1,1$.  Combining as in \eqref{E:keycomb} we find that 
\begin{equation*}
\liminf \|\Lop_{n_j,m_j}\|_\mu^2\ge \frac{\sqrt{\breve p\left(\frac{u}{1+u}\right) \breve p^*\left(\frac{u}{1+u}\right)}}{5}.
\end{equation*}
Since the right-hand side above approaches infinity as $u\to1$ we see 
from Theorem  \ref{T:Lnrm} that $\Lop$ fails to be bounded on $L^2(\bndry D,\mu)$.
\end{Ex}

\begin{Ex}\label{E:x2}
Pick $0<\nu<1$ and let $\breve p$ be a continuous map from $[0,1]$ to $[2,\infty]$ satisfying 
\begin{itemize}
\item $\breve p(s)=\left(s-\frac12\right)^{-\nu}$ for $s$ near $\frac12$;
\item $\breve p(s)$ is finite and smooth for $s\ne \frac12$;
\item $\breve p(s) \equiv 2$ for $s$ near $0$ and for $s$ near $1$.  
\end{itemize}
Let $D$ be the domain generated by $\breve p, 1, 1$ as in 
\eqref{E:s-theta-param} and Definition \ref{D:gen}. 
We claim that $D$ has the following properties:
\refstepcounter{equation}\label{N:exprop2}
\begin{enum}
\item $D$ is a convex Reinhardt domain; \label{I:strong}
\item $\bndry D$ is of class $C^{1,\frac{1}{\nu+1}}$ (but not better);\label{I:nst}
\item\label{I:sat} $\bndry D$ is of class $C^{\infty}$ away from the torus corresponding to $s=\frac12$; 
\item $D\notin \tilde \RR$; \label{I:ntr2}
\item the principal curvatures $\kappa_1$ and $\kappa_2$ have positive lower and upper bounds, while $\kappa_3$ has a positive lower bound but tends to infinity we approach the  torus 
corresponding to $s=\frac12$;
\item the conditions of Theorem \ref{T:adm} are still equivalent and thus can still be used to define the notion of an admissible measure as in Definition \ref{D:adm}; \label{I:adm2}
\item measures of order $q$  are admissible if and only if $|q|<1/\nu$; \label{I:qadm2}
\item $\Lop_D$ fails to be bounded on $L^2(\bndry D,\mu)$ 
when $\mu$ is any admissible measure  given by $\frac{1}{4\pi^2}\omega(s)\,ds\,d\theta_1\,d\theta_2$ with $\omega(s)$ positive and continuous for $s\in(0,\frac12)\cup(\frac12, 1)$.\label{I:unbdd2}
\end{enum}

Item \itemref{N:exprop2}{I:sat} is clear from the construction. Item\itemref{N:exprop2}{I:nst} may be verified with the use of the series expansion
\begin{equation*}
\phi(r_1)=\phi(r_1^*) - (r_1-r_1^*) - Q(r_1-r_1^*)^{1+\frac{1}{\nu+1}}+\dots
\end{equation*}
where $r_1^*$ is the value of $r_1$ corresponding to $s=\frac12$ and $Q$ is a positive constant.

The other items are verified as in Example \ref{E:x1}. 

We note that $D$ is strongly convex in the sense of \cite{Pol}.
\end{Ex}

\begin{Ex}\label{E:x3} Let 
$$\breve p(s)=\dfrac{\log(10/s)}{\log(10/s)-1/2}, \,b_2=1, \,b_1=\sqrt{\log10}$$
 and let $D$ be the domain generated by $\breve p, b_1, 1$ as in \eqref{E:s-theta-param} and Definition \ref{D:gen}.
We claim that $D$ has the following properties:
\refstepcounter{equation}\label{N:exprop3}
\begin{enum}
\item $D$ is a convex Reinhardt domain;
\item $\bndry D$ is of class $C^1$ but not in any stronger H\"older class; \label{I:hold}
\item $D\in \tilde\RR\setminus \RR$; \label{I:rtnr}
\item the conditions of Theorem \ref{T:adm} are still equivalent and thus can still be used to define the notion of an admissible measure as in Definition \ref{D:adm}; \label{I:adm3}
\item surface measure is not admissible; \label{I:surf}
\item measures of order $q$ are admissible for $|q|<1$; \label{I:qs}
\item $\Lop_D$ fails to be bounded on $L^2(\bndry D,\mu)$ 
when $\mu$ is any admissible measure given by $\frac{1}{4\pi^2}\omega(s)\,ds\,d\theta_1\,d\theta_2$ with $\omega(s)$ positive and continuous for $s\in(0, 1)$. \label{I:unbdd3}
\end{enum}

Note that 
\begin{align*}
\breve p^*(s)&=2\log(10/s),\\
\frac{1}{s\breve p(s)}&=\frac{d}{ds}\left(\log s + \frac12 \log(\log(10/s))\right),\\
\frac{1}{s\breve p^*(s)}&=\frac{d}{ds}\left(- \frac12 \log(\log(10/s))\right).
\end{align*}
 It is easy to check now that  conditions \eqref{E:a1-stretch} through \eqref{E:vert-pb} hold but \eqref{E:p-ext} fails, showing that \itemref{N:exprop3}{I:rtnr} holds.  
 
Away from the $\zeta_2$-axis $D$ behaves like a domain in $\RR$.  

To understand the behavior near the $\zeta_2$-axis we note that \[r_1=s\sqrt{\log(10/s)}\] (by \eqref{E:r1-s}), while $r_2\to 1$ as $s\to 0$.  Item \itemref{N:exprop3}{I:hold} can now be deduced from \eqref{E:hidden-u-var}.
Using \eqref{E:ord-q} we see that a measure of order $q$ takes the form \eqref{E:meas1} with $\omega(s)$ a positive continuous  multiple (near $s=0$) of \[\big(s\log(10/s) \big)^q;\] it follows easily that such a measure is admissible if and only if $|q|<1$, establishing \itemref{N:exprop3}{I:surf} and \itemref{N:exprop3}{I:qs}.

The proof of \itemref{N:exprop3}{I:unbdd3} goes along the same lines as the proof of (6.1g),
 but this time we let $u$ approach $0$.

The other items are verified as in the previous examples.

\end{Ex}

\section{Duality}\label{S:rr}

Given a bounded convex Reinhardt domain $D\subset\C^2$, the {\em polar} of $D$ is the bounded convex Reinhardt domain
\begin{equation} \label{E:polar-def}
D^* \eqdef \{ z\in\C^2 \st \Re \langle z, \zeta \rangle < 1 \text{ for all } \zeta\in D\},
\end{equation}
where $\langle \cdot, \cdot \rangle$ denotes the standard Hermitian inner product on $\C^2$.

For $\zeta\in\bndry D$ there is $z\in\bndry D^*$ satisfying
$\Re \langle z, \zeta \rangle = 1$; the rotational symmetries of $D$ in fact imply that
\begin{equation}\label{E:Tdef}
\langle z, \zeta \rangle = 1.
\end{equation}

Now assume $bD$ is $C^1$-smooth;
then \eqref{E:Tdef} uniquely determines $z\in bD^*$ (the tangent space to $\bndry D$ at $\zeta$ is given by $\Re \langle z, \zeta \rangle = 1$).

Assume further that $D$ is strictly convex (i.e., $\bndry D$ contains no line segments).  Then the map $T:\bndry D\to\bndry D^*$ defined by $T(\zeta)=z$ is injective (since, for $z$ and $\zeta$ as in  \eqref{E:Tdef}, we have $ \{\eta\in \overline D\st \Re \langle z, \eta \rangle = 1\}=\{\zeta\}$).  
Compactness arguments show that $T$ is a homeomorphism.
It is easy to check that $T$ restricts to a homeomorphism from $\gamma=\gamma_D$ to $\gamma^*\eqdef\gamma_{D^*}$ (see
 \eqref{E:gamma-def}); moreover, the restriction of $T$ to $\gamma$ determines the whole map $T$ via the formula 
\begin{equation*}
 T:(r_1 e^{i\theta_1}, r_2 e^{i\theta_2})
\mapsto
\left( T_1(r_1,r_2) e^{i\theta_1}, T_2(r_1,r_2) e^{i\theta_2}\right).
\end{equation*}

As before, we set $\gamma_+=\gamma\cap \R^2_+$ and $\gamma^*_+=\gamma^*\cap \R^2_+$.

\begin{Thm}\label{T:duality} Suppose $D\in\tilde\RR$. Then the following will hold.
\begin{enumerate}[{\normalfont (a)}]
\item  $D^*\in\tilde\RR$.  \label{I:trr*}

\item If $D\in\RR$ then $D^*\in\RR$. \label{I:rr*}

\item The mapping $\displaystyle{T\big|_{\gamma_+}:\gamma_+\to\gamma^*_+}$ is a $C^1$-smooth diffeomorphism. \label{I:Tdiff}

\item The following relations hold along $\gamma_+$: 
\begin{align}
r_1\cdot (r_1\circ T) + r_2\cdot(r_2\circ T) &= 1\label{E:bas*}\\
(r_1\circ T) \,d r_1 + (r_2\circ T)\,dr_2 &= 0\label{E:d*bas*}\\
r_1\,d (r_1\circ T) + r_2\,d(r_2 \circ T) &= 0\label{E:dbas*}\\
r_1\cdot (r_1\circ T)&=s\label{E:r1s}\\
r_2\cdot (r_2\circ T)&=1-s\label{E:r2s}\\
s\circ T&= s\label{E:ss}\\
\frac{1}{p}+\frac{1}{p\circ T} &= 1.\label{E:pp*}
\end{align}

\item If $D$ is generated by $\breve p,b_2, b_1$ then $D^*$ is generated by $\breve p^*,b_2\inv, b_1\inv$. \label{I:gen*}

\item If $d\mu=\omega(s)\,ds\,d\theta_1\,d\theta_2$ describes an admissible measure on $\bndry D$ then $d\tilde\mu=\frac{1}{\omega(s)}\,ds\,d\theta_1\,d\theta_2$ describes an admissible measure on $\bndry D^*$. \label{I:mu*}

\item The operator
\begin{align*}
U_\mu: f \mapsto (f\circ T)\cdot\omega\inv
\end{align*}
defines an isometry between $L^2(\bndry D^*,\tilde\mu)$ and $L^2(\bndry D,\mu)$. \label{I:twine1}

\item If   $\Lop_D$ is bounded on $L^2(\bndry D,\mu)$ 
then
 $\Lop_{D^*}$ is bounded on $L^2(\bndry D^*,\tilde\mu)$ and the isometry $U_\mu$ intertwines $\Lop$ with its adjoints in the following sense: 
\begin{align*}
\Lop^*_{D,\mu}\circ U_\mu &= U_\mu \circ \Lop_{D^*}\\
\Lop_{D}\circ U_\mu &= U_\mu \circ \Lop^*_{D^*,\tilde\mu}.
\end{align*}
The norm of $\Lop_{D^*}$  on  $L^2(\bndry D^*,\tilde\mu)$ equals
 the norm of $\Lop_{D}$ on \linebreak $L^2(\bndry D,\mu)$, and the spectral data for  $\Lop_{\tilde\mu}^*\Lop$  and $\Aop_{\tilde\mu}$ on $\bndry D^*$ match the spectral data for $\Lop_\mu^*\Lop$ and $\Aop_\mu$ on $\bndry D$, respectively. \label{I:twine2}
\end{enumerate}
\end{Thm}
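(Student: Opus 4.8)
The plan is to reduce the statement to the rank-one Fourier pieces of \S\ref{S:Lt} and to show that the isometry $U_\mu$ of part (g) carries the generators of $\Lop_{D^*,n,m}$ to scalar multiples of the generators of $\Lop_{D,n,m}$, with the crucial feature that it \emph{interchanges} the roles of $\kappa$ and $\tau$. First I would note that, because $T$ fixes the angular variables $\theta_1,\theta_2$ and satisfies $s\circ T=s$ (relation \eqref{E:ss}), the operator $U_\mu$ sends $(n,m)$-monomials on $\bndry D^*$ to $(n,m)$-monomials on $\bndry D$; hence $U_\mu$ respects the orthogonal decomposition into the subspaces $L^2_{n,m}$, and it suffices to argue one piece at a time. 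On the pieces with $\min\{n,m\}<0$ both $\Lop_D$ and $\Lop_{D^*}$ vanish, so the two intertwining identities hold trivially there.

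The heart of the matter is the action of $U_\mu$ on the starred generators $\tilde\kappa_{n,m},\tilde\tau_{n,m}$ built from the $D^*$-data. Writing $r_1^*,r_2^*,s^*$ for the coordinate functions of $D^*$ and $\tilde\omega=1/\omega$ for the weight of $\tilde\mu$ (part (f)), the relations \eqref{E:r1s}, \eqref{E:r2s} and \eqref{E:ss} give, along $\gamma_+$,
\[
r_1^*\circ T=\frac{s}{r_1},\qquad r_2^*\circ T=\frac{1-s}{r_2},\qquad \frac{s^*}{r_1^*}\circ T=r_1,\qquad \frac{1-s^*}{r_2^*}\circ T=r_2,
\]
together with $\tilde\omega\circ T=1/\omega$. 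Substituting these into the defining formulas \eqref{E:kappa-tau} written for $D^*$ and multiplying by $\omega^{-1}$, I obtain, with $c_{n,m}=\frac{(n+m+1)!}{n!\,m!}$,
\[
U_\mu\tilde\tau_{n,m}=\frac{1}{c_{n,m}}\,\kappa_{n,m},\qquad U_\mu\tilde\kappa_{n,m}=c_{n,m}\,\tau_{n,m}.
\]
These two identities encode the whole phenomenon: on $L^2_{n,m}$ one has $\Lop_{D^*}f=\langle f,\tilde\kappa_{n,m}\rangle_{\tilde\mu}\,\tilde\tau_{n,m}$ and $\Lop^*_{D,\mu}g=\langle g,\tau_{n,m}\rangle_\mu\,\kappa_{n,m}$, so applying $U_\mu$ to the first formula and using that $U_\mu$ preserves inner products together with $U_\mu^{-1}\tau_{n,m}=c_{n,m}^{-1}\tilde\kappa_{n,m}$ yields $\Lop^*_{D,\mu}\circ U_\mu=U_\mu\circ\Lop_{D^*}$. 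The companion identity $\Lop_D\circ U_\mu=U_\mu\circ\Lop^*_{D^*,\tilde\mu}$ follows either from the symmetric computation or by taking Hilbert-space adjoints of the first and invoking that the isometry $U_\mu$ is unitary.

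The remaining conclusions are then formal. Taking $\mu$- and $\tilde\mu$-norms in the two displayed identities and using the isometry, the reciprocal factors $c_{n,m}$ cancel, so Proposition \ref{P:piece-norm} gives $\|\Lop_{D^*,n,m}\|_{\tilde\mu}=\|\kappa_{n,m}\|_\mu\,\|\tau_{n,m}\|_\mu=\|\Lop_{D,n,m}\|_\mu$ for every $(n,m)$; by Theorem \ref{T:Lnrm} the boundedness of $\Lop_D$ therefore forces that of $\Lop_{D^*}$, with equal operator norm. Rewriting the intertwining relations as $\Lop_{D^*}=U_\mu^{-1}\Lop^*_{D,\mu}U_\mu$ and $\Lop^*_{D^*,\tilde\mu}=U_\mu^{-1}\Lop_D U_\mu$ and composing shows that $\Lop^*_{D^*,\tilde\mu}\Lop_{D^*}$ is unitarily equivalent to $\Lop_D\Lop^*_{D,\mu}$, which by the corollary to Proposition \ref{P:l*l} is in turn unitarily equivalent to $\Lop^*_{D,\mu}\Lop_D$; likewise $\Aop_{\tilde\mu}=-U_\mu^{-1}\Aop_\mu U_\mu$. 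Since the spectrum of the anti-self-adjoint $\Aop_\mu$ is symmetric under negation (its nonzero eigenvalues occur in conjugate pairs $\pm i\lambda$, by Proposition \ref{P:norm-Anm}), unitary equivalence to $-\Aop_\mu$ preserves it, and so all spectral and essential-spectral data match.

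I expect the only genuine work to be the bookkeeping in the second paragraph: correctly pulling the $D^*$-coordinates $r_1^*,r_2^*,s^*$ and the reciprocal weight back through $T$, and confirming that $U_\mu$ is diagonal with respect to the $(n,m)$-grading. Once the swap $U_\mu\colon\kappa\leftrightarrow\tau$ (up to the combinatorial factor $c_{n,m}$) is in hand, everything else is automatic — indeed it is exactly this interchange that converts $\Lop$ into $\Lop^*$ under the duality. The one point requiring a moment's care is the minus sign in $\Aop_{\tilde\mu}=-U_\mu^{-1}\Aop_\mu U_\mu$, which is harmless precisely because of the $\pm i\lambda$ symmetry just noted.
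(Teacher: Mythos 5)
Your treatment of part (h) is correct and in fact fleshes out what the paper leaves as a one-line remark: the computations $U_\mu\tilde\tau_{n,m}=c_{n,m}^{-1}\kappa_{n,m}$ and $U_\mu\tilde\kappa_{n,m}=c_{n,m}\tau_{n,m}$, the resulting intertwining identities, the cancellation of the combinatorial factors in the norm comparison, and the observation that the sign in $\Aop_{\tilde\mu}=-U_\mu^{-1}\Aop_\mu U_\mu$ is harmless because the spectrum of an anti-self-adjoint operator is symmetric under negation are all sound. But there is a genuine gap: you prove only part (h), while taking as given precisely the parts of the theorem that carry the geometric content. The relations \eqref{E:r1s}, \eqref{E:r2s}, \eqref{E:ss} that you invoke in your second paragraph \emph{are} part (d) of the statement being proved; the identification of the weight of $\tilde\mu$ and its admissibility are part (f); and the isometry property of $U_\mu$ is part (g). None of these is established in your proposal. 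The core of the paper's proof is exactly the derivation of part (d) from the definition \eqref{E:polar-def} of the polar: one obtains \eqref{E:bas*} from \eqref{E:Tdef}, differentiates with $z$ fixed to get \eqref{E:d*bas*}, solves the resulting system together with \eqref{E:subst2} to find $r_1\circ T=s/r_1$ and $r_2\circ T=(1-s)/r_2$ (which also yields the $C^1$-smoothness of $T$ in part (c)), then differentiates with $\zeta$ fixed to get \eqref{E:dbas*}, and finally verifies \eqref{E:ss} and \eqref{E:pp*} with the help of \eqref{E:ds/dt}.

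Moreover, parts (a), (b) and (e) --- that the polar of a $\tilde\RR$-domain (resp.\ $\RR$-domain) is again in $\tilde\RR$ (resp.\ $\RR$), and that $D^*$ is generated by $\breve p^*$, $b_2^{-1}$, $b_1^{-1}$ --- are absent from your proposal altogether. In the paper these follow from the identity $\breve p_{D^*}=\breve p^*_D$ (a consequence of \eqref{E:ss} and \eqref{E:pp*}) combined with the characterizations in Theorems \ref{T:rr-tilde-cond} and \ref{T:p-rr-cond}, plus the limits from the proof of Lemma \ref{L:squ} to identify the constants $b_j$. Note that part (a) is needed even to make sense of the objects you use: without knowing $D^*\in\tilde\RR$, the Fourier-piece machinery of \S\ref{S:Lt} for $D^*$ (the generators $\tilde\kappa_{n,m}$, $\tilde\tau_{n,m}$ and the formulas \eqref{E:kaptau}, \eqref{E:kaptau*}) is not available. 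So your argument, while a correct and somewhat more explicit route through part (h) than the paper's sketch, presupposes parts (a), (d), (f) and (g), and therefore does not constitute a proof of the theorem; the missing work is the derivation of the duality relations themselves from \eqref{E:Tdef}.
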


\begin{proof}
The relation \eqref{E:bas*} follows from  \eqref{E:Tdef}.

Holding $z$ fixed in \eqref{E:Tdef} and differentiating with respect to $\zeta\in \gamma_+$ we obtain \eqref{E:d*bas*}.

Solving \eqref{E:Tdef} and \eqref{E:d*bas*} for $r_1\circ T$ and $r_2\circ T$ and recalling \eqref{E:subst2} we obtain
\begin{align*}
r_1\circ T &= \frac{dr_2}{r_1\,dr_2-r_2\,dr_1}=\frac{s}{r_1}\\
r_2\circ T &= \frac{-dr_1}{r_1\,dr_2-r_2\,dr_1}=\frac{1-s}{r_1}\\
\end{align*}
establishing \eqref{E:r1s} and  \eqref{E:r2s}.  It follows that $T$ is a $C^1$-smooth map  from $\gamma_+$ to $\gamma^*_+$; thus we may also hold $\zeta$ fixed in \eqref{E:Tdef} and differentiate with respect to $z$ to obtain \eqref{E:dbas*}.
Define $s$ on $\gamma^*_+$ by using the middle third of
 \eqref{E:subst2};
applying \eqref{E:dbas*}, \eqref{E:r1s} and \eqref{E:r2s} to \eqref{E:subst2} we verify \eqref{E:ss}.

To verify \eqref{E:pp*} we note that from \eqref{E:ds/dt}, \eqref{E:r1s} and \eqref{E:ss} we have 
\begin{align*}
\frac{1}{p}+\frac{1}{p\circ T}
&= \frac{d\log r_1}{d\log s}+ \frac{d\log (r_1\circ T)}{d\log (s\circ T)}\\
&=\frac{d\log r_1}{d\log s}+ \frac{d\log s-d\log r_1}{d\log s}\\
&=1.
\end{align*}

From \eqref{E:ss} and \eqref{E:pp*} we obtain $\breve p_{D^*}=\breve p^*_D$. Parts \eqref{I:trr*} and \eqref{I:rr*} 
of the current theorem follow now from Theorems \ref{T:rr-tilde-cond} and \ref{T:p-rr-cond}; using the limits from the proof of Lemma \ref{L:squ} to sort out the $b_j$s we also obtain \eqref{I:gen*}.

Reversing our reasoning we see that $T\inv$ is also $C^1$-smooth on $\gamma^*_+$.

Item \eqref{I:mu*} is an immediate consequence of Definition \ref{D:adm}, item \itemref{N:adm}{I:adm}
of Theorem \ref{T:adm} and the relation \eqref{E:ss}.

A direct computation shows that the operator $U_\mu$ defined in item \eqref{I:twine1} is norm-preserving.

The intertwining relations in item \eqref{I:twine2} are verified by checking each Fourier piece using \eqref{E:kappa-tau}, \eqref{E:kaptau} and \eqref{E:kaptau*}.  The remaining claims in \eqref{I:twine2} follow from the isometric nature of $U_\mu$ and general principles.
\end{proof}

\begin{Remark}
Aspects of the duality presented here are treated for smooth strongly $\C$-linearly convex domains in arbitrary dimension without the Reinhardt assumption in \cite{Bar2}.
\end{Remark}

\section{Closing remarks}\label{S:conc}

\begin{itemize}

\item[(A)] The following result highlights the special role played by the measure $\mu_0$ given by $d\mu_0=\frac{1}{4\pi^2} \,ds\,d\theta_1\,d\theta_2$.

\begin{Prop}
Let $D\in\tilde\RR$ and suppose that $\Lop$ is bounded on $L^{2}(\bndry D,\mu)$ for some admissible measure $\mu$ on $\bndry D$.  Then the following conditions are equivalent.
\refstepcounter{equation}\label{N:closer}
\begin{enum}
\item\label{I:mult} The restriction of the operator $\Lop_\mu^*\Lop$ to the orthogonal complement of its kernel admits an orthogonal basis of eigenfunctions that consists of $(n,m)$-monomials and is closed under multiplication.
\item\label{I:spec} The measure $\mu$ is a constant multiple of $\mu_0$.
\end{enum}
\end{Prop}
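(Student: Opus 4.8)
The plan is to make the eigenfunctions of $\Lop_\mu^*\Lop$ fully explicit using the vectors $\kappa_{n,m}$ from \eqref{E:kappa-tau}, and then to reduce both implications to an elementary multiplicativity condition on $\omega$. First I would record the eigenstructure. Writing $\Lop=\sum_{n,m\ge0}\Lop_{n,m}$ as in \eqref{E:L-Four-def}, we have $\Lop_\mu^*\Lop=\sum_{n,m\ge0}(\Lop_{n,m})^*_\mu\Lop_{n,m}$, and Proposition \ref{P:l*l} shows that on $L^2_{n,m}(\bndry D,\mu)$ with $n,m\ge0$ this operator equals $\|\Lop_{n,m}\|_\mu^2$ times the orthogonal projection onto the line $\Span\{\kappa_{n,m}\}$, while it vanishes identically on $L^2_{n,m}(\bndry D,\mu)$ when $\min\{n,m\}<0$. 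Since each $\Lop_{n,m}$ is a nonzero projection we have $\|\Lop_{n,m}\|_\mu^2>0$ for every $n,m\ge0$; hence the orthogonal complement of $\ker(\Lop_\mu^*\Lop)$ is $\overline{\Span}\{\kappa_{n,m}\st n,m\ge0\}$, and the restricted operator acts as the scalar $\|\Lop_{n,m}\|_\mu^2$ on each one-dimensional space $\Span\{\kappa_{n,m}\}$.

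Next I would argue that the monomial hypothesis pins the basis down. Any $(n,m)$-monomial lies in $L^2_{n,m}(\bndry D,\mu)$, so an $(n,m)$-monomial lying in the orthogonal complement of the kernel must lie in $L^2_{n,m}(\bndry D,\mu)\cap\bigl(\ker(\Lop_\mu^*\Lop)\bigr)^\perp=\Span\{\kappa_{n,m}\}$, and any such vector is automatically an eigenfunction. Consequently an orthogonal basis of monomial eigenfunctions for the restricted operator consists, up to nonzero scalars $c_{n,m}$, of exactly the functions $\kappa_{n,m}$ ($n,m\ge0$), one in each Fourier slot. Here, from \eqref{E:kappa-tau}, $\kappa_{n,m}$ is a fixed positive constant times $\left(\frac{s}{r_1}\right)^n\left(\frac{1-s}{r_2}\right)^m\omega(s)\inv e^{i(n\theta_1+m\theta_2)}$; in particular $\kappa_{0,0}$ is a constant times $\omega(s)\inv$.

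With this in hand, both implications are short computations. To see that \itemref{N:closer}{I:spec} implies \itemref{N:closer}{I:mult}, assume $\omega$ is constant; then the product $\kappa_{n_1,m_1}\kappa_{n_2,m_2}$ is a constant times $\left(\frac{s}{r_1}\right)^{n_1+n_2}\left(\frac{1-s}{r_2}\right)^{m_1+m_2}\omega(s)\inv e^{i((n_1+n_2)\theta_1+(m_1+m_2)\theta_2)}$ (the two factors of $\omega\inv$ collapse to one precisely because $\omega\inv$ is constant), hence a scalar multiple of $\kappa_{n_1+n_2,m_1+m_2}$, so the basis is closed under multiplication; admissibility guarantees these products remain in $L^2$. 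For the converse, closure under multiplication forces the product of two basis monomials, which lives in a single Fourier slot, to be a scalar multiple of the basis element there; taking both factors equal to (a multiple of) $\kappa_{0,0}$ yields $\kappa_{0,0}^2\propto\kappa_{0,0}$, that is $\omega(s)^{-2}\propto\omega(s)^{-1}$ as functions of $s$, which forces $\omega$ to be constant and hence $\mu$ to be a constant multiple of $\mu_0$.

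I expect the main obstacle to be conceptual rather than computational: one must argue carefully that insisting the eigenbasis consist of genuine $(n,m)$-monomials collapses each eigenline to a scalar multiple of $\kappa_{n,m}$, thereby excluding the cross-slot linear combinations that would otherwise be permissible whenever distinct values $\|\Lop_{n,m}\|_\mu$ happen to coincide, and that ``closed under multiplication'' is correctly read as the statement that the product of two basis monomials is again, up to a scalar, a basis element. Once these two readings are fixed, the multiplicativity criterion $\omega\inv\cdot\omega\inv\propto\omega\inv$ does all the work.
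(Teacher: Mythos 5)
Your proof is correct and follows essentially the same route as the paper's: both identify the eigenfunctions of $\Lop_\mu^*\Lop$ via \eqref{E:kappa-tau} and Proposition \ref{P:l*l}, exhibit the explicit monomial basis when $\omega$ is constant, and in the converse direction use closure under multiplication to force $\omega\inv\cdot\omega\inv\propto\omega\inv$, hence $\omega$ constant. The only cosmetic differences are that you square $\kappa_{0,0}$ where the paper multiplies $\kappa_{0,0}$ by $\kappa_{1,0}$, and that you spell out more carefully why every monomial eigenfunction in the orthogonal complement of the kernel must be a scalar multiple of some $\kappa_{n,m}$ --- a point the paper leaves implicit.
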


\begin{proof}
Suppose that \itemref{N:closer}{I:spec} holds. Then referring to identity \eqref{E:kappa-tau} and Proposition \ref{P:l*l} 
we see that the functions
\[
\left\{ \left(\dfrac{  s}{r_1}\right)^{n}\left(\dfrac{1- s}{ r_2}\right)^{m}
e^{i(n\theta_1+m\theta_2)}\st
n,m\ge 0\right\}
\]
provide the desired basis.

Suppose now that \itemref{N:closer}{I:mult} holds.  Referring again to 
\eqref{E:kappa-tau} and
Proposition \ref{P:l*l} we see that our basis must contain constant multiples of the eigenfunctions
$\frac{1}{\omega(s)}$ and $\frac{s}{r_1} \frac{1}{\omega(s)} e^{i\theta_1}$, and that furthermore the product of these two eigenfunctions must be a constant multiple of the second eigenfunction.  It follows that $\omega(s)$ is constant, as claimed.
\end{proof}

\item[(B)] The methods we have employed here rely significantly on the circular symmetry of complete Reinhardt domains. We plan  to examine in a future paper the question of which of our results generalize to non-Reinhardt $\C$-linearly convex domains.  Of course, it will also be interesting to see what happens in higher dimension.
\bigskip

\end{itemize}


\begin{thebibliography}{CaSh}

\bibitem[Aiz]{Aiz} L. A. Aizenberg, {\em Integral representations of 
functions which are holomorphic in  convex  regions of 
$\mathbb C^n$ space}, Soviet Math. (Dokl. Akad, Nauk) {\bf 151} (1963), 1149--1152.

\bibitem[APS]{APS} 
M. Andersson, M. Passare and R. Sigurdsson, {\em Complex convexity and analytic functionals,} Progress in Mathematics {\bf 225}, Birkh\"auser Verlag, 2004.

\bibitem[Bar1]{Bar1} D. Barrett, {\em A floating body approach to
Fefferman's hypersurface measure,} Math. Scand., {\bf 98} (2006), 69-80.

\bibitem[Bar2]{Bar2} D. Barrett, {\em Holomorphic projection and duality for domains in complex projective space,} {\tt arXiv:0810.0858},
preprint.

\bibitem[Bol1]{Bol1} M. Bolt, {\em Spectrum of the Kerzman-Stein operator for model domains},  Integral Equations Operator Theory  {\bf 50}  (2004),   305--315.

\bibitem[Bol2]{Bol2} M. Bolt, {\em A geometric characterization: complex ellipsoids and the Bochner-Martinelli kernel}, Illinois J. Math. {\bf 49} (2005), 811--826

\bibitem[Bol3]{Bol3} M. Bolt, {\em The M\"obius geometry of hypersurfaces,} Michigan Math. J. {\bf56} (2008), 603-622.

\bibitem[BL]{BL} A. Bonami and N. Lohou\' e, {\em Projecteurs de Bergman et Szeg\H o pour une classe de domaines faiblement pseudo- 
convexes et estimations $L^p$} Compositio Math. 46 (1982), no. 2, 159--226.

\bibitem[CM]{CM}  C. Cowen and B. MacCluer, {\em Composition Operators on Spaces of Analytic Functionals,} CRC Press, 1995.

\bibitem[Fef]{Fef} C. Fefferman, {\em Parabolic invariant theory in complex
analysis,} Adv. in Math. {\bf 31} (1979), 131-262.

\bibitem[Han]{Han}
T. Hansson, {\em
On Hardy spaces in complex ellipsoids,} Ann. Inst. Fourier (Grenoble) {\bf 49} (1999),  1477--1501. 

\bibitem[Hen]{Hen}  G. M. Henkin, {\em  Integral representation of functions which are holomorphic in strictly pseudoconvex regions, and some applications,} (Russian) Mat. Sb. (N.S.) {\bf 78} (1969),  611--632. 

\bibitem[HeLe]{HeLe} G. Henkin and J. Leiterer, {\em Theory of functions on complex manifolds,\/},
Birkh\"auser, 1984.

\bibitem[HR]{HR} N. Higson and N. Roe, {\em Analytic K-Homology}, Oxford Math. Monographs (2000).

\bibitem[KS1]{KS1} N. Kerzman and E. M. Stein, {\em  The Szeg\"o kernel in terms of Cauchy-Fantappi\`e kernels,}  Duke Math. J. {\bf 45} (1978),   197--224. 

\bibitem[KS2]{KS2}
N. Kerzman and E. M. Stein, {\em  The Cauchy kernel, the Szeg\"o kernel, and
the Riemann mapping  function,} Math. Ann. {\bf 236} (1978),   85--93. 

\bibitem[Kra]{Kra} S. Krantz, {\em Function theory of several complex variables (2nd
ed.)}, Wadsworth \& Brooks/Cole, 1992.

\bibitem[LL]{LL}  E. Lieb and M. Loss, {\em Analysis (2nd ed.)}, American Mathematical Society, 2001.

\bibitem[LS]{LS} L. Lanzani and E. M. Stein, {\em Cauchy-type integrals on non-smooth domains in $\mathbb C^n$}, in preparation.

\bibitem[Ler]{Ler} J. Leray, {\em Le calcul diff\'erentiel et int\'egral sur une vari\'et\'e analytique complexe (Probl\'eme de Cauchy, III}, Bull. Soc. Math. France {\bf 82} (1959), 6--180.

\bibitem[Met]{Met} G. Metivier, {\em Spectral asymptotics for the $\bar\partial$-Neumann problem,} Duke Math. J. {\bf 48} (1981),  779--806.

\bibitem[Mil]{Mil}  P. Miller, {\em Applied Asymptotic Analysis}, American Mathematical Society, 2006.

\bibitem[Nor]{Nor} F. Norguet, {\em R\'epresentations integrales des fonctions de plusieures variables complexes}, C. R. Acad. Sci. Paris {\bf 250}
 (1960), 1780--1782.
 
\bibitem[{\"O}T]{OT} H.  \"Ozbay and A. Tannenbaum, {\em A skew Toeplitz approach to the $H\sp \infty$ optimal control of multivariable distributed systems,} SIAM J. Control Optim. {\bf 28} (1990),  653--670. 

\bibitem[Pel]{Pel} V. Peller, {\em Hankel operators and their applications}, Springer Monographs in Math. (2003).

\bibitem[Pol]{Pol} E. S. Polovinkin, { \em Strongly convex analysis,} (Russian) Mat. Sb. {\bf 187} (1996),  103--130; translation in Sb. Math. {\bf 187} (1996),  259--286.

\bibitem[Ram]{Ram}
E. Ram\'irez de Arellano, {\em Ein Divisionsproblem und Randintegraldarstellungen in der komplexen Analysis},  Math. Ann. {\bf  184}  (1969/1970), 172--187. 

\bibitem[Ran]{Ran}
R. M. Range, {\em Holomorphic functions and integral representations in several complex variables}, Springer-Verlag 1986.

\bibitem[RS]{RS}  M. Reed and B. Simon, {\em Methods of modern mathematical physics, Vol I:
Functional analysis, revised and enlarged edition,\/}  Academic Press, 1980. 
\end{thebibliography}
\end{document}